\DeclareMathOperator{\Exp}{exp}
\DeclareMathOperator{\codim}{codim}
\DeclareMathOperator{\Diff}{Diff}
\DeclareMathOperator{\Real}{Re}
\DeclareMathOperator{\Imag}{Im}
\DeclareMathOperator{\diag}{diag}
\DeclareMathOperator{\Spec}{spec}
\DeclareMathOperator{\Sing}{Sing}
\newcommand{\xx}{{\bf x}}
\newcommand{\yy}{{\bf y}}
\newcommand{\ww}{{\bf w}}
\newcommand{\zz}{{\bf z}}
\newcommand{\diff}[2]{\mbox{{\rm Diff}{${\,}_{#1}({\mathbb C}^{#2},0)$}}}
\newcommand{\Diffh}{\widehat{\Diff}}
\newcommand{\petal}{R_{d,e,\varepsilon}}
\newcommand{\basin}{S^m_{d,e,\varepsilon}}
\newcommand{\spaceH}{\mathcal{H}^m_{d,e,\varepsilon}}
\newcommand{\cvf}{\hat{\mathfrak{X}}}
\def\N{\mathbb N}
\def\R{\mathbb R}
\def\C{\mathbb C}
\def\Cd#1{(\C^#1,0)}
\def\mc#1{{\mathcal #1}}
\newcommand{\vp}{\varphi}
\def\G{\Gamma}
\def\g{\gamma}
\newcommand{\la}{{\lambda}}
\newcommand{\wt}[1]{{\widetilde{#1}}}
\newcommand{\ol}[1]{\overline{#1}}
\newcommand{\parcial}[1]{\frac{\partial}{\partial {#1}}}
\theoremstyle{plain}
\newtheorem{theorem}{Theorem}[section]
\newtheorem{proposition}[theorem]{Proposition}
\newtheorem{lemma}[theorem]{Lemma}
\newtheorem{cor}[theorem]{Corollary}
\theoremstyle{definition}
\newtheorem{definition}[theorem]{Definition}
\newtheorem{remark}[theorem]{Remark}
\theoremstyle{plain}
\newtheorem{Theorem}{Theorem}
\newtheorem{Proposition}[Theorem]{Proposition}
\theoremstyle{definition}
\newcommand{\obra}[3]{{\sc #1} {\em #2}. {#3}.}
\newcommand{\diffh}[2]{\mbox{$\widehat{\rm Diff}{{\,}_{#1}({\mathbb C}^{#2},0)}$}}
\newcommand{\cn}[1]{\mbox{(${\mathbb C}^{#1},0$)}}
\author[L. L\'opez-Hernanz]{Lorena L\'opez-Hernanz}
\address{Lorena L\'opez-Hernanz\\ Departamento de F\'isica y Matem\'{a}ticas\\
Universidad de Alcal\'a\\ Spain} \email{lorena.lopezh@uah.es}
\author[J. Rib\'on]{Javier Rib\'on}
\address{Javier Rib\'on\\ Instituto de Matem\'{a}tica e Estat\'istica\\Universidade Federal Fluminense\\ Brazil}
\email{javier@mat.uff.br}
\author[F. Sanz S\'anchez]{Fernando Sanz S\'anchez}
\address{Fernando Sanz S\'anchez\\ Departamento de \'{A}lgebra, An\'{a}lisis Matem\'{a}tico, Geometr\'{\i}a y Topolog\'{\i}a\\ Universidad de Valladolid\\ Spain} \email{fsanz@agt.uva.es}
\author[L. Vivas]{Liz Vivas}
\address{Liz Vivas\\ Department of Mathematics\\The Ohio State University\\USA} \email{vivas@math.osu.edu}
\thanks{First, second and third authors partially supported by Ministerio de Ciencia e Innovaci\'on, Spain, process MTM2016-77642-C2-1-P and PID2019-105621GB-I00. Fourth author partially supported by NSF-1800777.}
\date{}
\title[Stable manifolds asymptotic to formal curves]
{Stable manifolds of biholomorphisms in $\C^n$ asymptotic to formal curves}
\begin{document}
\begin{abstract}
Given a germ of biholomorphism $F\in\Diff\Cd n$ with a formal invariant curve $\G$ such that the multiplier of the restricted formal diffeomorphism $F|_\G$ is a root of unity or satisfies $|(F|_\G)'(0)|<1$, we prove that either $\G$ is contained in the set of periodic points of $F$ or there exists a finite family of stable manifolds of $F$ where all the orbits are asymptotic to $\G$ and whose union eventually contains every orbit asymptotic to $\G$.
This result generalizes to the case where $\Gamma$ is a formal  periodic curve.
\end{abstract}
\maketitle
\section{Introduction}\label{sec:intro}

In this paper, we generalize to arbitrary dimension $n$ the main result proved by L\'{o}pez-Hernanz et al. in \cite{Lop-R-R-S} for $n=2$.
Namely, we consider a germ of biholomorphism $F\in\Diff\Cd n$ with a formal invariant curve $\G$ at the origin.  Assuming that the multiplier $\lambda$ of the restricted (formal) diffeomorphism $F|_\G$ is a root of unity or satisfies $|\lambda|<1$, we prove that either $\G$ is contained in the set of periodic points of $F$ or there exists a finite family of stable manifolds of $F$ whose union consists of and contains eventually any orbit of $F$ asymptotic to $\Gamma$, i.e. having flat contact with 
$\Gamma$. Note that the condition on $\lambda$ corresponds to the necessary condition for the existence of stable orbits of the one-dimensional dynamics of $F|_\G$ when $\G$ is convergent (see P\'{e}rez-Marco \cite{Per,Per1}). 
It does not depend on the rest of multipliers of $F$.

In the two-dimensional case, the stable manifolds obtained in \cite{Lop-R-R-S} are either one-dimensional (saddle behavior) or open sets (node behavior). In dimension $n \geq 3$, we also obtain stable manifolds of intermediate dimension $1 < s < n$ that share some properties with both the saddle and the node cases of dimension two. Although the theorem is very similar,  the proof is not a straightforward generalization of the two-dimensional one. We need to introduce new techniques that we discuss below in this introduction.

Let us first describe more precisely the statement of the main theorem.

A {\em stable set} of $F$ is a subset $B\subset V$ of an open neighborhood $V$ of $0$ where $F$ is defined which is invariant, i.e. $F(B)\subset B$, and such that the orbit of each point of $B$ converges to $0$. If $B$ is an analytic, locally closed submanifold of $V$ then we say that $B$ is a {\em stable manifold} of $F$ (in $V$). Let us remark that, as in \cite{Lop-R-R-S}, our definition of stable manifold is more general than the classical one, since the stable manifolds considered in this paper do not contain the origin in general, even if by definition their closures contain it.

A formal (irreducible) curve $\G$  at $0\in\C^n$ is a prime ideal $\G$ of $\C[[x_1,...,x_n]]$ such that $\C[[x_1,\dots,x_n]]/\G$ has dimension 1.
We say that $\G$ is {\em invariant} by $F$ if $\G \circ F = \G$. In this case, we can consider the {\em restriction} $F|_\G$,
which is a formal diffeomorphism in one variable (see Section~\ref{sec:blow-ups} for details).
A non-trivial (positive) orbit $O$ of $F$ is {\em asymptotic} to
a formal curve $\G$ if $O$ converges to the origin and, for any finite composition $\sigma$ of blow-ups of points, the lifted orbit $\sigma^{-1}(O)$
has a limit equal to the point on the transform of $\G$ by $\sigma$.  
If this is the case then $\G$ is necessarily invariant for $F$ (see Section ~\ref{sec:blow-ups}).

Our main result is the following:

\begin{Theorem}\label{th:main}
	Consider $F\in\Diff\Cd n$ and let $\G$ be a formal invariant curve of $F$.   Assume that the multiplier
	$\lambda=(F|_{\G})'(0)$
	is a root of unity or satisfies $|\lambda|<1$. Then we have one of the following two possibilities:
	\begin{enumerate}[(i)]
		\item The curve $\G$ is contained in the set of fixed points of some non-trivial iterate of $F$, or
         \item $F|_{\G}$ is not periodic and
         there exist orbits of $F$ asymptotic to $\G$.
	\end{enumerate}
 In the latter case, in any sufficiently small open neighborhood $V$ of $0$ there exists a non-empty finite family of pairwise disjoint stable manifolds $S_1,...,S_r\subset V$ of $F$ of pure positive dimension and with finitely many connected components such that the orbit of every point in $S_1\cup\dots\cup S_r$ is asymptotic to $\G$ and such that any orbit of $F$ asymptotic to $\G$ is eventually contained in
$S_1\cup\dots\cup S_r$.
\end{Theorem}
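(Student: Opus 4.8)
The plan is to reduce the $n$-dimensional problem to a one-dimensional dynamical analysis along $\G$ after a suitable change of coordinates, and then to build the stable manifolds by fixed-point methods (graph transforms / parametrized Leau--Fatou-type constructions). First I would choose a formal transformation bringing $\G$ to a standard position; since $\G$ is an irreducible formal curve, after a finite composition $\sigma$ of blow-ups of points and a ramification we may assume $\G$ has a Puiseux parametrization, and by a further (formal, then truncated-analytic) change of coordinates we may assume $\G$ is the formal completion of a coordinate axis, say the $x_1$-axis, so that $F$ takes the form $x_1\mapsto \la x_1 + \cdots$, $x_j \mapsto (\text{linear part in } x_2,\dots,x_n) + \cdots$ modulo the ideal $\G$. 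The key structural input is that orbits asymptotic to $\G$ are exactly the orbits with flat contact with $\G$ in the sense given via all blow-ups, so it suffices to work in a chart where $\G$ is an axis and to control the transverse variables $(x_2,\dots,x_n)$.

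The heart of the argument splits according to the nature of $\la$. If $|\la|<1$, I would apply a parametrized contraction argument: restricting to a polydisc where $|x_1|$ and $\|(x_2,\dots,x_n)\|$ are small, the map is a contraction in a cone around the $x_1$-axis once the transverse eigenvalues are handled, and the stable set is obtained as the graph of a fixed point of the graph transform; here the transverse spectrum of $DF$ at $0$ (restricted to the normal bundle of $\G$) determines the dimension $s$ of the stable manifold through the number of transverse eigenvalues of modulus $<1$ together with resonance/small-divisor bookkeeping, which is why intermediate dimensions $1<s<n$ appear. If $\la$ is a primitive $q$-th root of unity, I would replace $F$ by $F^q$, so that $F^q|_\G$ is tangent to the identity and not periodic (case (ii)), and then run a Leau--Fatou flower construction \emph{along} $\G$: the restriction $F^q|_\G$ has $\nu\ge 1$ attracting petals (where $\nu$ is read off the order of tangency), and over each attracting petal in the $x_1$-variable one constructs an invariant ``horn'' in the transverse directions by another fixed-point argument, using that the transverse part is a uniform contraction once $x_1$ is confined to a petal shrinking to $0$. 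The union of these horns over the finitely many petals gives the finite family $S_1,\dots,S_r$; disjointness and the ``eventually contains every asymptotic orbit'' property follow because any asymptotic orbit must, after finitely many steps, enter one petal in the $x_1$-coordinate (by the one-dimensional Leau--Fatou theory for $F^q|_\G$) and then be trapped by the corresponding horn (by uniqueness of the fixed point of the transverse graph transform). Pure positive dimension and finiteness of connected components are then automatic from the construction, and one finally pushes everything back down through $\sigma$ and the ramification, checking that blow-downs of these manifolds remain locally closed analytic submanifolds away from $0$ — this is where the dichotomy (i)/(ii) enters, since if $F^q|_\G=\id$ then $\G$ lies in a fixed-point set and no petals exist.

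Two points deserve care. First, the formal (non-convergent) nature of $\G$ means the coordinate change putting $\G$ on an axis is only formal; one must truncate it to high finite order and carry along an explicit flatness estimate for the error, showing that the truncation error does not destroy the contraction/petal estimates — this is the technical device replacing the convergent normal form and is, I expect, where the new techniques alluded to in the introduction are needed in dimension $\ge 3$. Second, in the intermediate-dimension case one must show the transverse invariant subspace is genuinely analytic and not merely formal; this requires a small-divisors-free argument (e.g. a contraction on a space of holomorphic sections with a weighted sup-norm adapted to the petal), and handling the coupling between the central (petal) direction and the transverse directions when some transverse eigenvalues have modulus exactly $1$ or satisfy resonances with $\la$. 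The main obstacle, therefore, is establishing the transverse invariant manifold with the correct (analytic) regularity uniformly over the petals while only having formal control of $\G$: everything else (the one-dimensional flower theory, the bookkeeping of petals, the disjointness and maximality of the family, the descent through blow-ups) is either classical or follows the two-dimensional template of \cite{Lop-R-R-S}.
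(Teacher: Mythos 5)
Your outline correctly identifies the two regimes ($|\lambda|<1$ versus $\lambda$ a root of unity), the use of petals of the one-dimensional dynamics of $F|_\G$ (replacing $F$ by an iterate), and the need for truncated changes of coordinates because $\G$ is only formal. However, there is a genuine gap at the heart of the rationally neutral case, which is the core of the theorem. You assume that once $x_1$ is confined to an attracting petal, ``the transverse part is a uniform contraction'', and that the dimension $s$ of the stable manifold is read off from the transverse eigenvalues of $D_0F$ of modulus $<1$. Both claims fail in the relevant situation: after passing to the iterate for which the construction works, the eigenvalues of the transverse linear part that matter are equal to $1$ (Remark~\ref{rk:eigenvalues-iteration}), so $D_0F$ carries no contraction information at all. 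Whether a transverse variable $y_j$ is attracted or repelled along a given attracting direction $\ell$ is governed by the signs of $\Real(\xi^{k+l}A_{j,l})$ for the coefficients of the \emph{polynomial} matrix $D(x)+x^qC$ of the Ramis--Sibuya form, i.e.\ by flat, direction-dependent data of order $\geq \nu(F-\mathrm{id})$, not by the spectrum of $D_0F$. This is precisely why the paper must first reduce the pair to Ramis--Sibuya form — via the infinitesimal generator of an iterate (Binyamini, pro-algebraic groups, Section~\ref{sec:infinitesimal}) and a Turrittin-type reduction of the associated system of $n-1$ formal meromorphic ODEs through permissible blow-ups and ramifications (Section~\ref{sec:RS-vector-fields}) — before any fixed-point argument can be set up. Merely truncating a formal change of coordinates that puts $\G$ on an axis does not produce this normal form, so in your scheme the node/saddle splitting per direction cannot even be defined.

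Consequently your ``horn'' construction also breaks: in general there are saddle variables along $\ell$ (flatly expanding transverse directions), so no invariant full-dimensional horn trapped by a contraction exists; the stable manifold over $\ell$ is a graph in the saddle variables over the node variables and $x$, of dimension $s=\#\{\text{node variables}\}+1$, and proving its existence requires the separated estimates of Lemma~\ref{lem:nodaldomain} and the fixed-point argument of Section~\ref{sec:stable-manifolds} (the paper uses Schauder rather than a contraction, and recovers uniqueness and the ``eventually contains every asymptotic orbit'' property by the separate argument of Lemma~\ref{lem:base-asymp}, not by uniqueness of a graph transform). In the hyperbolic attracting case your picture is also off: there the paper shows (Theorem~\ref{th:main2-hyperbolic}) that $\G$ is in fact convergent and $\G\setminus\{0\}$ itself is the stable set, obtained from the classical Stable Manifold and Hartman--Grobman theorems after blow-ups; intermediate dimensions $1<s<n$ occur only in the rationally neutral case and come from the node/saddle count, not from transverse eigenvalues of modulus $<1$.
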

The stable manifolds $S_1,...,S_r$
provide a base of asymptotic convergence along $\Gamma$ \`{a} la Ueda \cite{Ued1}. We can be more precise in the hyperbolic case:
\begin{Proposition}\label{pr:main2-hyperbolic}
Consider $F\in\Diff\Cd n$ and let $\G$ be an invariant formal curve of $F$.  Assume that the multiplier $\lambda=(F|_{\G})'(0)$
satisfies $|\lambda|<1$. Then $\G$ is a germ of an analytic curve at the origin and a representative of $\G$ is a stable manifold of $F$ that eventually contains any orbit of $F$ asymptotic to $\G$.
\end{Proposition}

The result can be stated more generally for a formal {\em periodic} curve $\G$ (i.e. $\G$ is invariant for some iterate $F^s$ of $F$). More precisely, we apply Theorem  \ref{th:main} to $F^s$ and $\Gamma$ in order to obtain stable manifolds for $F^s$, which induce stable manifolds for $F$ by simple arguments that can be found in \cite{Lop-R-R-S}.

It is worth mentioning that whereas a planar diffeomorphism $F\in\Diff\Cd 2$ always has a formal periodic curve (Rib\'{on }\cite{Rib}, see also Corollary~\ref{cor:per_curve}), this is no longer true for dimension $n\ge 3$ by an example of a holomorphic vector field of G\'{o}mez-Mont and Luengo \cite{Gom-L},
whose flow is treated by Abate and Tovena in \cite{Aba-T}.
As a consequence of the results in Section~\ref{sec:infinitesimal}, we will obtain in Section~\ref{sec:cond_for_inv_curves} a condition that guarantees the existence of formal invariant curves in dimension three,
inspired by a result of Cerveau and Lins Neto for vector fields \cite{Cerveau-Lins:cod2}.

Notice that in any dimension there are linear examples of biholomorphisms $F$ with an invariant axis for which the multiplier $\lambda$ either satisfies
$|\lambda|>1$ or is {\em irrationally neutral} (i.e. $|\lambda|=1$ and it is not a root of unity) and Theorem~\ref{th:main} does not hold. Thus, the hypothesis concerning $\lambda$ in Theorem~\ref{th:main} is necessary. In fact, for $n=1$, as we mentioned above, if there are positive orbits of $F$ converging to the origin then $\lambda$ satisfies the hypothesis of Theorem~\ref{th:main}.

\strut

Let us describe the structure of the proof of Theorem~\ref{th:main}.
After recalling in Section~\ref{sec:blow-ups} the main definitions and properties concerning formal curves, blow-ups and asymptotic orbits, in Section~\ref{sec:hyperbolic} we study the case where $F|_\G$ is hyperbolic attracting and we prove Proposition ~\ref{pr:main2-hyperbolic}. In this case, the result is a consequence of the classical stable manifold and Hartman-Grobman theorems for diffeomorphisms and the proof goes as in the two-dimensional case \cite{Lop-R-R-S}.

The case where the multiplier $\lambda$ is a root of unity is the core of the paper. We assume that $F|_\G$ is not periodic. One of the main ingredients in this case is a suitable normal form for the pair $(F,\G)$, that we call {\em Ramis-Sibuya form}. Namely, there exist coordinates $(x,\yy)$ at $0\in\C^n$ such that $\G$ is non-singular and transverse to $x=0$ and $F$ is written as
$$
F(x,\yy )=\left(x-x^{q+1}+  b x^{2q+1} + O(x^{2q+2}) ,
\exp\left(D(x)+x^qC\right)\yy+O(x^{q+1})\right),
$$
where $q\ge1$, $b \in{\mathbb C}$,
$D(x)$ is a diagonal polynomial matrix of degree at most $q-1$ and $C$ is a constant matrix such that $[D(x),C]=0$.
Ramis-Sibuya form is inspired by a classical result on normal forms of systems of linear ODEs with formal meromorphic coefficients due to Turrittin \cite{Tur}. Such normal forms are also used for non-linear systems by Braaksma \cite{Bra} and Ramis and Sibuya \cite{Ram-S} in order to prove multisummability of the formal solutions of the system (when the coefficients are convergent). In Section~\ref{sec:RS-vector-fields-definition} we define the analogous Ramis-Sibuya form for a pair $(X,\G)$, where $X$ is a formal vector field and $\G$ is a formal invariant curve of $X$, and in Section~\ref{sec:reduction} we prove that any pair $(X,\G)$ can be reduced to Ramis-Sibuya form by means of a finite number of blow-ups with smooth centers and ramifications, all of them adapted to $\G$. This result is essentially a consequence of Turrittin's theorem once we associate to $(X,\G)$ a system of $n-1$ formal meromorphic ODEs after some initial punctual blow-ups.

Then, concerning the reduction of a pair $(F,\G)$ to Ramis-Sibuya form, we use a result by Binyamini~\cite{Binyamini:finite} that guarantees that an adequate iterate $F^m$ of $F$ is the time-1 flow of a formal vector field $X$, a so called {\it infinitesimal generator},  which will allow to obtain a reduction of $(F^m, \G)$ to Ramis-Sibuya form from the corresponding one for $(X,\G)$. In order for this construction to work the condition
$F = \Exp (X)$ is not sufficient. We need the biholomorphism and the vector field to
share some additional geometrical properties; for
instance, the invariant curve $\G$ must be also invariant for $X$.
We devote Section~\ref{sec:infinitesimal} to showing
the geometrical nature of the correspondence between local biholomorphisms and infinitesimal generators.
Moreover, we determine whether $X$ is a geometrical infinitesimal
generator of $\Exp (X)$ (Theorem \ref{teo:car_inf_gen}). The condition depends only on
the eigenvalues of the linear part $D_0 X$ of $X$ at the origin.

Using these results, in Section~\ref{sec:RS-diffeos} we accomplish the reduction of the pair $(F^m,\G)$. The fact that we are replacing $F$ by an iterate presents no problem for the proof of Theorem~\ref{th:main}: if $S$ is a stable manifold of $F^m$ composed of orbits asymptotic to the invariant curve $\G$, then $\cup_{k=0}^{m-1} F^{k}(S)$ is also stable for $F$.
Moreover, the blow-ups and ramifications considered in the reduction preserve the property of asymptoticity of the orbits to the formal curve. Thus, for the proof of Theorem~\ref{th:main}, we may assume that $(F,\G)$ is already in Ramis-Sibuya form.

Finally, in Section~\ref{sec:stable-manifolds} we prove Theorem~\ref{th:main} when $(F,\G)$ is in Ramis-Sibuya form. The family of stable manifolds in the statement is associated to the family of the $q$ attracting directions of the restricted formal diffeomorphism $F|_\G(x)=x-x^{q+1}+O(x^{2q+1})$. In fact, if  $\ell$ is such an attracting direction, the Ramis-Sibuya form allows to separate the $\yy$-variables in two groups, depending only on the restriction of the polynomial matrix $D(x)+x^qC$ to $\ell$. Dynamically, each of these groups corresponds either to a saddle or to a node behavior along the orbits that converge to the origin tangentially to $\ell$. The dimension of the stable manifold associated to $\ell$ will then be equal to the number of node variables plus one. The proof of the existence of those stable manifolds is a generalization of the corresponding one in~\cite{Lop-R-R-S}, inspired by Hakim's construction in \cite{Hak2} (see also \cite{Ari-R} by Arizzi and Raissy). In particular, they are obtained as fixed points of an adequate continuous map. Instead of using Banach fixed point theorem as in \cite{Hak}, \cite{Hak2}, \cite{Lop-S} or \cite{Lop-R-R-S}, we use Schauder fixed point theorem, which simplifies significantly the technical computations of the proof. The lack of uniqueness in Schauder's theorem will be compensated by an easy alternative argument.

\section{Formal invariant curves, asymptotic orbits and blow-ups}\label{sec:blow-ups}

In this section we introduce the main definitions and properties concerning a formal invariant curve $\G$ of a
formal vector field $X$  
or a biholomorphism $F$ and the behavior of both $F$ and $\G$ under punctual blow-ups. The content of this section is just a generalization to higher dimension of what can already be found in \cite{Lop-R-R-S} for dimension two. We include it for the sake of completeness and to fix notations.

First, let us consider invariance by formal vector fields.
An {\em (irreducible) formal curve} at $0\in\C^n$ is a prime ideal $\G$ of the ring
$\hat{\mathcal{O}}_n=\C[[x_1,...,x_n]]$  
such that the quotient ring $\hat{\mathcal{O}}_n/\G$ has dimension one. It is determined by a formal parametrization $\g(s)=(\g_1(s),...,\g_n(s))\in (s\C[[s]])^n\setminus\{0\}$ so that $g(\g(s))\equiv 0$ if and only if $g\in\G$.
Let $X\in\cvf\Cd n$ be a singular   formal vector field.
More precisely, once we choose coordinates $\xx=(x_1,...,x_n)$, we write $X$ as
$$
X=a_1(\xx)\parcial{x_1}+a_2(\xx)\parcial{x_2}+\cdots+a_n(\xx)\parcial{x_n},
$$
where $a_j(\xx)=X(x_j)\in\hat{\mathcal{O}}_n$ satisfies $a_j(0)=0$. The {\em multiplicity} of $X$, denoted by $\nu(X)$, is the minimum of the orders of the series $a_j$, which is independent of the chosen coordinates. The {\em singular locus} of $X$ is the ideal $\Sing(X)\subset \hat{\mathcal{O}}_n$ generated by the series $a_1,...,a_n$.

Recall that a formal curve $\G$ is invariant for $X$ if $X(\G)\subset\G$.
In terms of a parametrization $\g(s)$ of $\G$, invariance is equivalent to the existence of $h_\g(s)\in\C[[s]]$ such that
\begin{equation}\label{eq:invariant-curve-for-X}
X|_{\g(s)}=(a_1(\g(s)),...,a_n(\g(s)))=h_\g(s)\g'(s).
\end{equation}
Notice that $h_\g(s)\equiv 0$ if and only if $\Sing(X)\subset\G$ and thus this property is independent of the parametrization (we say that $\G$ is {\em contained} in the singular locus of $X$). When $\G$ is invariant, we define the {\em restriction} of $X$ to $\G$ as the one-dimensional formal vector field $$
X|_\G=h_\g(s)\parcial s,
$$
where $\g$ is an irreducible parametrization of $\G$ and $h_\g(s)$ is defined by equation (\ref{eq:invariant-curve-for-X}). Actually $X|_\Gamma$ can be defined intrinsically since
$X(\Gamma) \subset \Gamma$ implies that $X$ defines a derivation
of the ring of formal functions $\hat{\mathcal{O}}_n/\Gamma$ of
$\Gamma$. The multiplier $\lambda_\G= h_{\g}' (0)\in \C$ is called the
{\em inner eigenvalue} of the pair $(X,\G)$.
The {\em tangent eigenvalue} of $(X,\G)$, denoted by $\lambda (\G)$, is the eigenvalue
of the differential $D_0 X$ corresponding to the tangent direction of $\G$.
These eigenvalues are related by $\nu \lambda_{\G} = \lambda (\G)$, where $\nu$ is the multiplicity of $\G$ at 0.

A formal curve $\G$ is {\em invariant} for $F\in\Diff\Cd n$ if $h\circ F\in\G$ for any $h\in\G$. Moreover, given a parametrization $\g(s)$ of $\G$, the invariance of $\G$ is equivalent to the existence of a series $\theta(s)\in\C[[s]]$ with $\theta(0)=0$ and $\theta'(0)\neq 0$ such that $F\circ\g(s)=\g\circ\theta(s)$. This series $\theta(s)$ can be seen as a formal diffeomorphism in one variable, i.e. $\theta(s)\in\widehat{\Diff}(\C,0)$. Its class of formal conjugacy is independent of the chosen parametrization $\g(s)$ and any representative of this class is called the {\em restriction} of $F$ to $\G$ and denoted by $F|_\G$.

If $\G$ is invariant for $F$, the multiplier $\lambda_\G=(F|_\G)'(0)\in\C^*$ does not depend on $\theta(s)$ and is called the {\em inner eigenvalue} of the pair $(F,\G)$. Notice that this number is preserved under reparametrizations.
On the other hand, the {\em tangent eigenvalue} $\lambda(\G)$ of $(F,\G)$
is the eigenvalue of  $D_0F$ corresponding to the tangent direction of $\G$. It is easy to check that
\begin{equation}
\label{equ:inner_tangent}(\lambda_\G)^\nu=\lambda(\G),
\end{equation}
where $\nu$ is the multiplicity of $\G$ at $0$.
In particular, we have $\lambda_\G=\lambda(\G)$ when $\G$ is non-singular.

\begin{definition}\label{def:restricted-Gamma}
Let $\G$ be a formal invariant curve of $F\in\Diff\Cd n$ and let $\lambda_\G$ be the inner eigenvalue. We say that $\G$ is {\em hyperbolic attracting} if $|\lambda_\G|<1$, and that $\G$ is \emph{rationally neutral} if $\lambda_\G$ is a root of unity.
\end{definition}

\strut

Consider a germ of biholomorphism $F\in\Diff\Cd n$.
Denote by $\pi:\wt{\C^n}\to\C^n$ the blow-up of $\C^n$ at the origin and by $E=\pi^{-1}(0)$ the exceptional divisor. The transformed biholomorphism $\wt{F}=\pi^{-1}\circ F\circ\pi$ extends to an injective holomorphic map in a neighborhood of $E$ in $\wt{\C^n}$ such that
$\wt{F}(E) = E$.
We have moreover that $\wt{F}|_E$ is the projectivization of the linear map $D_0F$ in the identification $E\simeq\mathbb{P}^{n-1}_\C$ and hence fixed points $p\in E$ for $\wt{F}$ correspond to invariant lines of $D_0F$. Such a point $p$ is a {\em first infinitely near fixed point} of $F$ and the germ $F_p$ of $\wt{F}$ at $p$ is the {\em transform} of $F$ at $p$. Blowing-up repeatedly, we define sequences $\{p_k\}_{k\geq 0}$ of {\em infinitely near fixed points} of $F$ and corresponding transforms $F_{p_k}$, where $p_0=0$.

A formal curve $\G$ is also determined by its sequence of {\em iterated tangents} $\{q_k\}_{k\ge0}$, defined by: $q_0=0$ and, for $k\ge1$, if $\pi_{q_{k-1}}$ is the blow-up at $q_{k-1}$, the point $q_k\in\pi_{q_{k-1}}^{-1}(q_{k-1})$ corresponds to the tangent line of the strict transform of $\G$ at $q_{k-1}$.
The formal curve $\Gamma$ is invariant for $F$ if and only if
the sequence of iterated tangents of $\G$ is a sequence of infinitely near fixed points of $F$ (see \cite{Lop-R-R-S}).

Note that the inner eigenvalue is invariant under blow-up and hence the
condition of $\G$ being hyperbolic attracting or rationally neutral is stable under blow-ups.

\strut

Given a formal curve $\G$ at $0\in\C^n$, a stable non-trivial orbit $O=\{a_k=F^k(a_0)\}$ of a diffeomorphism $F\in\Diff\Cd n$ is {\em asymptotic} to $\G$ if, being $\{q_k\}$ the sequence of iterated tangents of $\G$, the following holds: if $\pi_1:M_1\to\C^n$ is the blow-up at the origin then $\lim_{k\to\infty}\pi_1^{-1}(a_k)=q_1$; if $\pi_2:M_2\to M_1$ is the blow-up at $q_1$ then $\lim_{k\to\infty}\pi_2^{-1}\circ\pi_1^{-1}(a_k)=q_2$; and so on. Notice that if such an orbit exists then $\G$ is invariant for $F$, since in this case any iterated tangent $q_k$ of $\G$ must be an infinitely near fixed point of $F$.

We remark that our definition of asymptoticity to a formal curve $\G$ corresponds to the standard one of having $\G$ as ``asymptotic expansion''. For instance, if $\G$ is non-singular and we consider a parametrization of the form $\g(s)=(s, \textbf{h}(s))\in\C[[s]]^n$ in some coordinates $(x,\yy)\in\C\times\C^{n-1}$, with $\textbf{h}(s)=\sum_{j=1}^{\infty}\textbf{h}_{j} s^{j}$, then a non-trivial orbit $O=\{(x_k,\yy_k)\}$ satisfies  $\lim_{k\to\infty}\pi_1^{-1}(x_k,\yy_k)=q_1$ if and only if $\lim_{k \to \infty} {\yy_{k}}/x_k = {\bf h}_1$. Then we obtain 
$\lim_{k\to\infty}\pi_2^{-1}\circ\pi_1^{-1}(x_k,\yy_k)=q_2$ if and only if $\lim_{k \to \infty} \frac{\yy_{k}/x_k - {\bf h}_1}{x_k} = {\bf h}_2$ and so on.
Therefore $O$ is asymptotic to $\G$ if and only if for any $N\in\N$ we have
$$
\lim_{k \to \infty} \frac{\yy_k-J_N{\bf h}(x_k)}{x_k^{N+1}} = {\bf h}_{N+1},  
$$
where $J_N$ denotes the $N$-jet. This implies $\left\|\yy_k-J_N{\bf h}(x_k)\right\|\leq (|| {\bf h}_{N+1}|| +1)|x_k|^{N+1}$
for some $k\geq k_0 (N)$.
\section{Hyperbolic attracting case}\label{sec:hyperbolic}

In this section we prove Theorem~\ref{th:main} in the case where the formal curve is hyperbolic attracting. More precisely, we prove Proposition \ref{pr:main2-hyperbolic}
in the introduction that we recall next for convenience. 
\begin{proposition}
Consider $F\in\Diff\Cd n$ and let $\G$ be an invariant formal curve of $F$. Assume that $\G$ is hyperbolic attracting. Then $\G$ is a germ of an analytic curve at the origin and a representative of $\G$ is a stable manifold of $F$ that eventually contains any orbit of $F$ asymptotic to $\G$.
\end{proposition}

\begin{proof}
Let $\{q_k\}_{k\geq 0}$ be the sequence of iterated tangents of $\G$. Notice that it suffices to prove the statement for $F_{q_k}$ and $\G_k$ at any point $q_k$, where $F_{q_k}$ is the transform of $F$ at $q_k$ and $\G_k$ is the strict transform of $\G$ at $q_k$. 
Notice that the curve $\G$ has a Puiseux parametrization by the local parametrization theorem 
(cf. \cite[Volume II, Section D, Theorem 10]{Gunning:II}) and hence $\G$ can be desingularized by a sequence of punctual blow-ups. Thus,
we can assume that $\G$ is non-singular.
Let $\lambda=\lambda(\G)$ be the tangent eigenvalue of $(F,\G)$, which coincides with the inner eigenvalue $\lambda_\G$ since $\G$ is non-singular. Set $\Spec(D_0F)=\{\la,\mu_2,\ldots,\mu_n\}$. An easy computation shows that the eigenvalues of the linear part of $F_{q_1}$ at $q_1$ are given by $\{\la,\mu_2/\la,\ldots,\mu_n/\la\}$. Moreover, $\lambda$ is still the tangent eigenvalue of the pair $(F_{q_1},\G_1)$ since the inner eigenvalue is preserved under blow-up. Repeating this argument, it follows that, for each $k$, the eigenvalues of the linear part of $F_{q_k}$ at $q_k$ are $\{\la,\mu_2/\la^k,\ldots,\mu_n/\la^k\}$. Now, assume that $k$ is large enough so that $|\la|<1<|\mu_j|/|\la^{k}|$ for any $j=2,...,n$. Then, by the Stable Manifold Theorem, we obtain that $\G_k$ is the stable manifold of $F_{q_k}$ at $q_k$, hence an analytic curve. Moreover, using Hartman-Grobman Theorem, we have that the unique orbits of $F_{q_k}$ that converge to $q_k$ are those which are eventually contained in $\G_k$.
\end{proof}

\begin{remark} \label{rk:attract}
From the theory of one-dimensional dynamics, we have that the hyperbolic attracting case is the only one for which there is a stable set whose germ is an analytic curve at the origin (cf. \cite{Lop-R-R-S}).
\end{remark}

\section{Infinitesimal generator of a biholomorphism}\label{sec:infinitesimal}

In this section, we recover a result due to Binyamini~\cite{Binyamini:finite} that guarantees that for any local biholomorphism $F \in \Diff\Cd n$ there exists a formal vector field $X$ such that the time-$1$ flow $\Exp(X)$
of the vector field is a non-trivial iterate $F^m$ of $F$. We prove that this construction is ``geometrically significant" in the sense
that the geometrical properties of $F^m$ and $X$ are related.
For example, the fixed point set of $F^m$ coincides with the singular set of $X$. Moreover,
the invariance of analytic sets is preserved by this correspondence between diffeomorphisms
and formal vector fields; indeed,
$F^m$ preserves a germ of analytic set, or a formal analytic set, if and only if
$X$ does. In particular, if $\Gamma$ is a formal invariant curve that is periodic for
$F \in \Diff\Cd n$, we will see that $\Gamma$ is invariant by $X$.
This property will be crucial in Section~\ref{sec:RS-vector-fields} to obtain a reduction of the pair $(F^m,\G)$ to Ramis-Sibuya form from the corresponding one for $(X,\G)$.

\subsection{Preliminaries}\label{sec:setting}
The strategy to obtain a vector field $X$ such that $\Exp(X)=F^m$ for some $m$ is a generalization to the context of diffeomorphisms
of the correspondence between the connected component of the identity of a finite dimensional algebraic group and the Lie algebra of the group.
This generalization is possible since the group $\Diffh\Cd n$ of formal diffeomorphisms, despite being infinite dimensional, can
be interpreted as a projective limit of finite dimensional algebraic groups.
This approach allows to define the algebraic closure
$\overline{\langle F \rangle}$ of the group $\langle F\rangle$ generated by $F$, its connected component of
the identity and its associated Lie algebra.
The infinitesimal generators of iterates of $F$ are chosen in
this Lie algebra. 
First, we introduce these ideas; further details can be found in
\cite{JR:solvable25}, \cite{JR:arxivdl} and \cite{JR:finite}.

Consider the normal subgroup $N_{k}$ of $\Diffh\Cd n$ defined by
\[ N_k = \{ F \in \Diffh\Cd n : x_{j}\circ F - x_j \in {\mathfrak m}^{k+1} \
\forall 1 \leq j \leq n \} ,\] 
where ${\mathfrak m}$ is the maximal ideal of the ring
$\hat{\mathcal{O}}_{n} = {\mathbb C}[[x_1, \dots, x_n]]$ of formal power series.
It is the subgroup of formal diffeomorphisms that have order of contact at least $k+1$
with the identity map.
We denote by $D_k$ the group $\Diffh\Cd n/ N_k$ of $k$-jets of formal diffeomorphisms.
Given $F \in \Diffh\Cd n$, we can uniquely associate to $F$ the element 
\begin{equation}
\label{equ:comp}
\begin{array}{ccccc}
F_k & : & {\mathfrak m}/ {\mathfrak m}^{k+1} & \to & {\mathfrak m}/ {\mathfrak m}^{k+1} \\
& & f +  {\mathfrak m}^{k+1} & \mapsto & f \circ F +  {\mathfrak m}^{k+1}
\end{array}
\end{equation}
of the linear group
$\mathrm{GL} ({\mathfrak m}/ {\mathfrak m}^{k+1})$ which only depends on the class of $F$ in $D_k$.  
In this way we can interpret $D_k$ as a subgroup of $\mathrm{GL} ({\mathfrak m}/ {\mathfrak m}^{k+1})$.
Moreover, it is a (finite dimensional) algebraic matrix group since
$\{ F_k : F \in \Diffh\Cd n \}$ coincides with the group of
automorphisms of the ${\mathbb C}$-algebra ${\mathfrak m}/ {\mathfrak m}^{k+1}$
(cf. \cite[Lemma 2.1]{JR:solvable25}).
The Lie algebra $L_k$ of $\{ F_k : F \in \Diffh\Cd n \}$ is the Lie algebra of derivations of the
${\mathbb C}$-algebra ${\mathfrak m}/ {\mathfrak m}^{k+1}$ for any $k \geq 1$.
Moreover, $L_{k}$ can be identified with $\hat{\mathfrak X} ({\mathbb C}^{n},0) / K_k$
where $\cvf\Cd n$ is the complex Lie algebra of singular formal vector fields
(i.e. derivations of the $\C$-algebra ${\mathfrak m}$) and
$K_k = \{ X \in \cvf\Cd n: X ({\mathfrak m}) \subset {\mathfrak m}^{k+1} \}$.

The natural projections $\pi_{k,l}: D_k \to D_l$ and $(d \pi_{k,l})_{\mathrm{Id}}: L_k \to L_l$
when $k \geq l$ define inverse systems and
the group $\Diffh\Cd n$ (resp. the Lie algebra $\hat{\mathfrak X} ({\mathbb C}^{n},0)$)
can be identified with the projective limit of the groups $D_k$
(resp. the Lie algebras $L_k$) for $k \geq 1$. We denote by $\pi_k:\Diffh\Cd n\to D_k$ and
$d\pi_k:\cvf\Cd n\to L_k$ the natural maps that send the projective limits onto their factors.

\begin{definition} \label{def:Zariski-closure}
Given a subgroup $G$ of $\Diffh\Cd n$,
we denote by $G_k$ the Zariski-closure of $\pi_{k}(G)$ and by $G_{k,0}$ the connected component
of the identity of $G_k$ for $k\ge1$. Then we define
\[ \overline{G} = \varprojlim G_{k} =
\{ F \in \Diffh\Cd n : \pi_k(F)\in G_k \ \forall k \in {\mathbb N} \} \]
and
\[  \overline{G}_0 = \varprojlim G_{k,0} =
\{ F \in \Diffh\Cd n : \pi_k(F)\in G_{k,0} \ \forall k \in {\mathbb N} \} \]
as the \emph{Zariski-closure} (or \emph{pro-algebraic closure}) of $G$ and its \emph{connected component of the identity},  
respectively. Given a subgroup $G$ of $\Diffh\Cd n$, we say that it is {\it pro-algebraic} if
$\overline{G} = G$. We define the \emph{Lie algebra} ${\mathfrak g}$ of $\overline{G}$ as
\[ {\mathfrak g} =  \varprojlim {\mathfrak g}_{k} =
\{ X \in \hat{\mathfrak X} ({\mathbb C}^{n},0) : d\pi_k(X) \in {\mathfrak g}_k \ \forall k \in\N\}, \]
where ${\mathfrak g}_k$ is the Lie algebra of $G_k$ for $k \geq 1$.
\end{definition}
\begin{remark} \cite[Proposition 2]{JR:arxivdl}
The Lie algebra ${\mathfrak g}$ of $\overline G$ satisfies
\[ {\mathfrak g} = \{ X \in \hat{\mathfrak X} ({\mathbb C}^{n},0) : \Exp (t X) \in \overline{G} \ \
\forall t \in {\mathbb C} \} , \]
where $\Exp (tX)$ is the time-$t$ flow of $X$, i.e. the formal diffeomorphism that satisfies, for any $g\in\hat{\mathcal{O}}_n$,
$$g\circ\Exp(tX)=\sum_{j=0}^\infty\frac{t^jX^j(g)}{j!},$$
where $X^0(g)=g$ and $X^j(g)=X(X^{j-1}(g))$ for $j\ge1$.
\end{remark}

In the two following results we summarize several properties of the finite dimensional setting that generalize to the infinite
dimensional one and provide a criterion that allows to identify pro-algebraic groups of formal diffeomorphisms.
\begin{proposition}\label{pro:axu}
Let $G$ be a subgroup of $\Diffh\Cd n$. We have
\begin{enumerate}[(i)]
\item $\overline{G}_0$ is a finite index normal pro-algebraic subgroup of $\overline{G}$
\cite[Proposition 2.3 and Remark 2.9]{JR:solvable25}.
\item Any finite index subgroup of $\overline{G}$ is pro-algebraic and contains $\overline{G}_0$
\cite[Lemmas 2.3 and 2.1]{JR:finite}.
\item $\overline{G}_0$ is generated by $\Exp({\mathfrak g})$
\cite[Proposition 2]{JR:arxivdl}. In particular $\Exp(t X)$ belongs to $\overline{G}_0$
for any $t \in {\mathbb C}$ and any $X \in {\mathfrak g}$.
\end{enumerate}
\end{proposition}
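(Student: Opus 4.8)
All three assertions are contained in the cited references; my plan is to derive each of them by reduction to the corresponding fact about the finite-dimensional algebraic groups $G_k$ and their Lie algebras $\mathfrak g_k$, and then pass to the projective limit. The preliminary observation that makes this work is that every transition map in the inverse systems $\{G_k\}$, $\{G_{k,0}\}$ and $\{\mathfrak g_k\}$ is surjective: $\pi_{k,l}(G_k)$ is a Zariski-closed subgroup of $D_l$ (the image of a morphism of algebraic groups is closed) that contains the dense set $\pi_{k,l}(\pi_k(G))=\pi_l(G)$, hence equals $G_l$; it then follows that $\pi_{k,l}$ carries $G_{k,0}$ onto $G_{l,0}$ and $d\pi_{k,l}$ carries $\mathfrak g_k$ onto $\mathfrak g_l$. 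By Mittag--Leffler, the natural maps $\pi_k\colon\overline G\to G_k$, $\pi_k\colon\overline G_0\to G_{k,0}$ and $d\pi_k\colon\mathfrak g\to\mathfrak g_k$ are then onto.

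Granting this, part (i) is almost formal: $G_{k,0}$ is normal in $G_k$, so $\overline G_0=\varprojlim G_{k,0}$ is normal in $\overline G$, and it is pro-algebraic since each $\pi_k(\overline G_0)=G_{k,0}$ is Zariski closed. The only substantial point is the finiteness of $[\overline G:\overline G_0]$, which I would obtain from a stabilization of the component groups. The kernel of $\pi_{k,1}\colon G_k\to G_1$ is contained in $\pi_k(N_1)$, and since a formal diffeomorphism tangent to the identity acts unipotently on the graded pieces of $\mathfrak m/\mathfrak m^{k+1}$, this kernel is unipotent, hence connected. An extension of the connected group $G_{1,0}$ by a connected kernel is connected, so $\pi_{k,1}^{-1}(G_{1,0})$ is a closed, connected, finite-index subgroup of $G_k$, that is $\pi_{k,1}^{-1}(G_{1,0})=G_{k,0}$. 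Therefore $\pi_{k,1}$ induces isomorphisms $G_k/G_{k,0}\cong G_1/G_{1,0}$ for every $k$, whence $\overline G/\overline G_0\cong G_1/G_{1,0}$ is finite. I expect this stabilization of the component groups to be the main obstacle, since it is the one place where the structure of $\Diffh\Cd n$ really enters (a projective limit of nontrivial finite groups can of course be infinite).

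For part (iii) I would use that in characteristic zero a connected algebraic group coincides with the algebraic subgroup generated by $\{\Exp(tY):Y\in\mathfrak g_k,\ t\in\C\}$ (this subgroup is connected and has Lie algebra $\mathfrak g_k$). Since $\pi_k(\Exp(tX))=\Exp(t\,d\pi_k(X))$ and $d\pi_k(\mathfrak g)=\mathfrak g_k$, this yields $\pi_k(\langle\Exp(\mathfrak g)\rangle)=G_{k,0}$ for all $k$; moreover $\Exp(tX)\in\overline G_0$ for $X\in\mathfrak g$ because $\pi_k(\Exp(tX))$ lies in the one-parameter subgroup of $G_k$ through the identity, hence in $G_{k,0}$. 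Combined with the fact that $\overline G_0$ is generated by its (divisible) one-parameter subgroups, one concludes $\langle\Exp(\mathfrak g)\rangle=\overline G_0$; the ``in particular'' clause is then immediate from the Remark preceding the statement.

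Finally, part (ii) follows from (iii): over $\C$ a connected algebraic group is generated by copies of $\mathbb G_a$ and $\mathbb G_m$, which are divisible, so $\overline G_0$ is divisible and has no proper subgroup of finite index. Hence if $H\le\overline G$ has finite index then $H\cap\overline G_0$ has finite index in $\overline G_0$ and so $H\supseteq\overline G_0$; writing $H$ as a finite union of cosets of $\overline G_0$, each $\pi_k(H)$ is a finite union of cosets of the Zariski-closed subgroup $G_{k,0}$, hence closed, and the identity $\pi_{k,1}^{-1}(G_{1,0})=G_{k,0}$ from part (i) gives $\overline H=\varprojlim\pi_k(H)=H$, so $H$ is pro-algebraic.
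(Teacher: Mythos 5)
Your treatment of (i) is sound and self-contained: the surjectivity of the transition maps, the observation that the kernel of $\pi_{k,1}\colon G_k\to G_1$ consists of unipotent elements (hence is connected in characteristic zero), and the resulting identification $G_{k,0}=G_k\cap\pi_{k,1}^{-1}(G_{1,0})$, giving $\overline{G}/\overline{G}_0\cong G_1/G_{1,0}$, is exactly the mechanism behind the fact, quoted later in the paper, that $|\overline{\langle F\rangle}:\overline{\langle F\rangle}_0|=|\overline{\langle D_0F\rangle}:\overline{\langle D_0F\rangle}_0|$. (The paper itself gives no proof of the proposition, only citations, so this part is a legitimate independent argument.)

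The gap is in (iii), and it propagates to (ii). From $\pi_k(\langle\Exp({\mathfrak g})\rangle)=G_{k,0}$ for every $k$ you cannot conclude $\langle\Exp({\mathfrak g})\rangle=\overline{G}_0=\varprojlim G_{k,0}$: a subgroup of an inverse limit may surject onto every factor and still be proper (compare ${\mathbb Z}\subset\varprojlim {\mathbb Z}/m{\mathbb Z}$ -- the same phenomenon you yourself point out in (i) for component groups). Level-wise surjectivity only says that $\langle\Exp({\mathfrak g})\rangle$ is dense in $\overline{G}_0$ for the pro-topology. The sentence meant to close this gap, ``combined with the fact that $\overline{G}_0$ is generated by its (divisible) one-parameter subgroups,'' is circular: that $\overline{G}_0$ is generated by one-parameter subgroups, and that its one-parameter subgroups are of the form $t\mapsto\Exp(tX)$ with $X\in{\mathfrak g}$, is precisely assertion (iii). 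A correct argument needs two extra ingredients: first, a uniform bound on the number of exponentials per level (e.g.\ via Jordan decomposition every element of a connected complex algebraic group is a product of two exponentials, one from a torus and one unipotent), and second, a compatibility argument across levels for a fixed $L\in\overline{G}_0$ -- for instance, the sets $V_k=\{(X,Y)\in{\mathfrak g}_k\times{\mathfrak g}_k:\exp(X)\exp(Y)=\pi_k(L)\}$ are nonempty constructible sets forming an inverse system, and one must show $\varprojlim V_k\neq\emptyset$ (using, say, that a countable decreasing chain of nonempty constructible subsets of a complex variety has nonempty intersection). This is the content of the cited Proposition 2 of \cite{JR:arxivdl} and is not formal. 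Consequently, in (ii) the claim that $\overline{G}_0$ has no proper finite-index subgroup (which you derive from generation by divisible one-parameter subgroups) is also unproven as written; the remainder of your argument for (ii), including the use of $G_{k,0}=G_k\cap\pi_{k,1}^{-1}(G_{1,0})$ to show a finite union of cosets of $\overline{G}_0$ is pro-algebraic, is fine once (iii) is available. The ``in particular'' clause of (iii), i.e.\ $\Exp(tX)\in\overline{G}_0$ for $X\in{\mathfrak g}$, is correctly established.
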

\begin{proposition}[{\cite[Lemma 2.4]{JR:solvable25}}]
\label{pro:criterium}
Assume that  $H_k$ is an algebraic subgroup of $D_k$
and $\pi_{k,l}(H_k) \subset H_l$ for any $k \geq l \geq 1$.
Then $\varprojlim H_{k}$
is pro-algebraic.
\end{proposition}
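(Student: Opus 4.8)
The plan is to simply unwind the definitions: write $H=\varprojlim H_k$ and prove $\overline{H}=H$ by establishing the two inclusions, only the inclusion $\overline{H}\subseteq H$ requiring the hypotheses. First I would note that the compatibility condition $\pi_{k,l}(H_k)\subseteq H_l$ for all $k\ge l\ge 1$ is exactly what makes $\{H_k\}_{k\ge 1}$ an inverse subsystem of $\{D_k\}_{k\ge 1}$; hence $H:=\varprojlim H_k=\{F\in\Diffh\Cd n:\pi_k(F)\in H_k\ \text{for all }k\ge1\}$ is a well-defined subgroup of $\Diffh\Cd n=\varprojlim D_k$, and in particular $\pi_k(H)\subseteq H_k$ for every $k$.

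Next, by Definition~\ref{def:Zariski-closure} applied to the subgroup $G=H$, we have $\overline{H}=\varprojlim Z_k$, where $Z_k$ denotes the Zariski closure of $\pi_k(H)$ inside $D_k$ (equivalently inside $\mathrm{GL}({\mathfrak m}/{\mathfrak m}^{k+1})$, since $D_k$ is itself algebraic). The key observation is that, since $\pi_k(H)\subseteq H_k$ and $H_k$ is an algebraic, hence Zariski closed, subgroup of $D_k$, we get $Z_k\subseteq H_k$ for every $k$. Consequently, if $F\in\overline{H}$ then $\pi_k(F)\in Z_k\subseteq H_k$ for all $k$, so $F\in\varprojlim H_k=H$; that is, $\overline{H}\subseteq H$. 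The reverse inclusion $H\subseteq\overline{H}$ holds for any subgroup and is immediate, since $\pi_k(H)\subseteq Z_k$ trivially implies that every $F\in H$ satisfies $\pi_k(F)\in Z_k$ for all $k$. Combining the two inclusions gives $\overline{H}=H$, which is precisely the assertion that $\varprojlim H_k$ is pro-algebraic.

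I do not expect a serious obstacle here: the substance is already contained in the machinery set up before the statement, namely that each $D_k$ is a finite-dimensional algebraic matrix group and each $\pi_{k,l}$ a morphism of algebraic groups, which is what legitimizes writing $\overline{H}=\varprojlim Z_k$ as a coherent inverse limit (one uses continuity of $\pi_{k,l}$ to see $\pi_{k,l}(Z_k)\subseteq Z_l$). The only point that requires a moment's care is that $Z_k$ need not coincide with $H_k$ — the restricted projection $\pi_k|_H$ need not surject onto $H_k$, because only $\pi_{k,l}(H_k)\subseteq H_l$ is assumed and not equality — but this is harmless, as the argument uses solely the inclusion $Z_k\subseteq H_k$, which in turn uses nothing more than that $H_k$ is Zariski closed.
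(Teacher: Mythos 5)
Your argument is correct: it is the straightforward unwinding of Definition~\ref{def:Zariski-closure}, using only that $\pi_k(\varprojlim H_k)\subset H_k$ and that each $H_k$ is Zariski closed, and your caveat that $\pi_k(\varprojlim H_k)$ need not be all of $H_k$ matches the remark the paper itself makes right after the statement. The paper gives no proof here (it cites Lemma~2.4 of \cite{JR:solvable25}), and your verification is exactly the intended one, so there is nothing further to compare.
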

Let us remark that a pro-algebraic subgroup $G$ of
$\Diffh\Cd n$ can be expressed in more than one way in the
form $\varprojlim H_{k}$ with the conditions of Proposition~\ref{pro:criterium}. Indeed, if $G=\varprojlim H_{k}$ then $G_k =\overline{\pi_k (G)}$  
is included in $H_k$
but they do not coincide in general.

\subsection{Construction of an infinitesimal generator}
In this section we define infinitesimal generators of formal diffeomorphism and show that for any $F\in\Diffh\Cd n$ there exists an index $m$ such that $F^m$ has an infinitesimal generator (Binyamini \cite{Binyamini:finite}). Before doing so, let us study the relation between the Lie algebra of $\overline{\langle F \rangle}$ and
the group $\overline{\langle F \rangle}_0$.
\begin{lemma}
\label{lem:ccifl}
Let $F \in \Diffh\Cd n$. Then  $\overline{\langle F \rangle}_0 = \Exp({\mathfrak g})$
where ${\mathfrak g}$ is the Lie algebra of $\overline{\langle F \rangle}$.
\end{lemma}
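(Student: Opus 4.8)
The plan is to show the two inclusions $\Exp(\mathfrak{g})\subset\overline{\langle F\rangle}_0$ and $\overline{\langle F\rangle}_0\subset\Exp(\mathfrak{g})$ separately. The first inclusion is immediate from Proposition~\ref{pro:axu}(iii): since $\mathfrak{g}$ is the Lie algebra of $\overline{\langle F\rangle}$, every $\Exp(tX)$ with $X\in\mathfrak{g}$ and $t\in\C$ lies in $\overline{\langle F\rangle}_0$, and in particular $\Exp(\mathfrak{g})\subset\overline{\langle F\rangle}_0$; moreover the set $\Exp(\mathfrak{g})$ already contains the identity. So the content of the lemma is the reverse inclusion. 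By Proposition~\ref{pro:axu}(iii) again, $\overline{\langle F\rangle}_0$ is \emph{generated} by $\Exp(\mathfrak{g})$, so what must be ruled out is that one needs genuine products $\Exp(X_1)\Exp(X_2)\cdots$ of several flows that are not themselves a single flow. The key structural fact that forces this not to happen is that $\overline{\langle F\rangle}$ is abelian: it is the pro-algebraic closure of the cyclic group $\langle F\rangle$, and taking Zariski closures preserves commutation relations, so each $\overline{\langle F\rangle}_k=\overline{\pi_k(\langle F\rangle)}$ is an abelian algebraic group, whence $\overline{\langle F\rangle}$ and its identity component $\overline{\langle F\rangle}_0$ are abelian, and likewise $\mathfrak{g}$ is an abelian Lie algebra.

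With commutativity in hand, I would argue level by level in the projective limit. Fix $k\ge 1$ and set $H:=\overline{\langle F\rangle}_{k,0}$, a connected abelian algebraic subgroup of $D_k$ with Lie algebra $\mathfrak{g}_k=d\pi_k(\mathfrak{g})$. For a connected \emph{commutative} algebraic group over $\C$ the exponential map $\exp\colon\mathfrak{g}_k\to H$ is surjective: this is classical, via the structure theorem $H\cong (\C^*)^a\times\C^b$ (a connected commutative linear algebraic group over an algebraically closed field of characteristic zero is a product of a torus and a vector group), on each factor of which the exponential is onto. Hence every element of $\overline{\langle F\rangle}_{k,0}$ is of the form $\exp(d\pi_k(X))=\pi_k(\Exp(X))$ for some $X\in\mathfrak{g}$. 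To pass to the limit, take $g\in\overline{\langle F\rangle}_0$; for each $k$ choose $X_k\in\mathfrak{g}$ with $\pi_k(\Exp(X_k))=\pi_k(g)$. The remaining point is to upgrade this into a \emph{single} $X\in\mathfrak{g}$ with $\Exp(X)=g$: here I would use that $\Exp$ is compatible with the projections, i.e. $d\pi_{k,l}(d\pi_k(X))=d\pi_l(X)$ and $\pi_{k,l}(\pi_k(\Exp X))=\pi_l(\Exp X)$, together with the fact that the fibers of $\exp$ on each abelian $\overline{\langle F\rangle}_{k,0}$ form a discrete subgroup (the "period lattice"), so that one can correct the $X_k$ within $\mathfrak{g}_k$ to make them compatible under $d\pi_{k,l}$ and then take $X=\varprojlim d\pi_k(X_k)\in\mathfrak{g}=\varprojlim\mathfrak{g}_k$. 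This gives $\Exp(X)=g$ and closes the reverse inclusion.

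The main obstacle is exactly this last inverse-limit bookkeeping: surjectivity of $\exp$ at each finite level $k$ is standard, but the chosen preimages $X_k$ need not be coherent under the maps $d\pi_{k,l}$, and one must show they can be adjusted by elements of the (discrete) kernels so as to define an element of the projective limit $\mathfrak{g}$. This is a Mittag--Leffler / inverse-limit argument; it works cleanly because the kernels of $\exp$ on the abelian groups $\overline{\langle F\rangle}_{k,0}$ are finitely generated free abelian groups and the transition maps between them behave well, but it is the step that requires care rather than the algebraic-group input. An alternative, perhaps cleaner, route to the same conclusion would be to invoke directly that for a pro-algebraic group which is a projective limit of connected commutative algebraic groups the exponential $\Exp\colon\mathfrak{g}\to\overline{\langle F\rangle}_0$ is surjective, citing the references \cite{JR:solvable25}, \cite{JR:arxivdl}; I would phrase the proof so that the only new observation needed here is the commutativity of $\overline{\langle F\rangle}$, everything else being assembled from Proposition~\ref{pro:axu} and the cited structure of the pro-algebraic setting.
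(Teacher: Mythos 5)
Your first inclusion and your diagnosis of the problem are right: everything hinges on $\overline{\langle F\rangle}$ (hence ${\mathfrak g}$) being abelian, and on Proposition~\ref{pro:axu}(iii). But the way you close the reverse inclusion has a genuine gap. The level-wise part is fine: each $\overline{\langle F\rangle}_{k,0}$ is a connected commutative linear algebraic group, so $\exp\colon{\mathfrak g}_k\to \overline{\langle F\rangle}_{k,0}$ is surjective. The problem is the gluing. The fibre of $\exp$ over $\pi_k(g)$ is a torsor under the discrete kernel $\Lambda_k=\ker(\exp)\subset{\mathfrak g}_k$, and you need the inverse limit of these fibres to be non-empty. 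This is \emph{not} automatic from "$\Lambda_k$ is a finitely generated free abelian group and the transition maps behave well": the images $d\pi_{k,l}(\Lambda_k)\subset\Lambda_l$ can form a strictly decreasing chain of subgroups of ${\mathbb Z}^{a_l}$, and nested cosets of such a chain can have empty intersection. A concrete model: take $G_k={\mathbb C}^*$ with transition maps $z\mapsto z^2$; then ${\mathfrak g}=\varprojlim({\mathbb C},\times 2)\cong{\mathbb C}$, but a compatible sequence of square roots corresponds to a $2$-adic integer, and most elements of $\varprojlim G_k$ are not exponentials of any element of ${\mathfrak g}$. So surjectivity of $\Exp$ on the limit is false for general inverse systems of connected commutative algebraic groups with surjective transitions; to make your argument work you would have to use specific structural information about the groups $G_{k,0}$ coming from the jet groups $D_k$ (e.g.\ stabilization of their torus parts), which your sketch does not provide, and your "alternative route" of citing the surjectivity of $\Exp$ for such pro-algebraic limits is essentially citing the statement of the lemma itself.

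The irony is that you already hold both ingredients of a complete one-line argument, which is exactly what the paper does: by Proposition~\ref{pro:axu}(iii) any $L\in\overline{\langle F\rangle}_0$ is a finite product $\Exp(X_1)\circ\cdots\circ\Exp(X_m)$ with $X_i\in{\mathfrak g}$, and since ${\mathfrak g}$ is abelian the vector fields commute, so $\Exp(X_1)\circ\cdots\circ\Exp(X_m)=\Exp(X_1+\cdots+X_m)$ with $X_1+\cdots+X_m\in{\mathfrak g}$. No finite-level analysis and no inverse-limit bookkeeping are needed; replacing your projective-limit step by this identity for commuting formal vector fields repairs the proof.
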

\begin{proof}
Analogously as for linear algebraic groups, since $\langle F \rangle$ is abelian, its Zariski-closure
$\overline{\langle F \rangle}$ is also abelian \cite[Lemma 1]{JR:arxivdl} and then
${\mathfrak g}$ is an abelian Lie algebra \cite[Proposition 3]{JR:arxivdl}.
Since $\overline{\langle F \rangle}_0$  is generated by $\Exp({\mathfrak g})$ by Proposition
\ref{pro:axu}, we have
$\Exp({\mathfrak g}) \subset \overline{\langle F \rangle}_0$. Moreover, any element $L$ of $\overline{\langle F \rangle}_0$ is of the form
\[ L= \Exp(X_1) \circ \cdots \circ \Exp(X_m)  = \Exp (X_1 + \cdots + X_m)  \]
where $X_1, \cdots, X_m \in {\mathfrak g}$. The last equality holds since
${\mathfrak g}$ is abelian.
\end{proof}
\begin{definition}\label{def:infgen}
Given a formal diffeomorphism $F\in\Diffh\Cd n$, a formal vector field $X\in\cvf\Cd n$ is an \emph{infinitesimal generator} of $F$ if $X$ belongs to the Lie algebra of $\overline{\langle F \rangle}$ and $F=\Exp(X)$.
\end{definition}

\begin{theorem}[{\cite[Corollary 7]{Binyamini:finite}}]
\label{theorem:infinitesimal_generator}
Consider $F \in \Diffh\Cd n$. There exists $m \in {\mathbb N}$ such that $F^{m}$ has an infinitesimal generator.
\end{theorem}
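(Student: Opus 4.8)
The plan is to work at each finite level $k$ inside the algebraic group $D_k$ and then take a projective limit, exploiting that the obstruction to exponentiality in a linear algebraic group lives in the group of connected components, which is finite. Fix $F\in\Diffh\Cd n$ and set $G=\langle F\rangle$. For each $k\ge1$ consider the Zariski-closure $G_k=\overline{\pi_k(G)}$ inside $D_k$ and its identity component $G_{k,0}$; by Proposition \ref{pro:axu}(i) the index $[G_k:G_{k,0}]$ is finite, say $e_k$. Since $\pi_k(F)$ generates a Zariski-dense subgroup of the abelian group $G_k$, its image in the finite quotient $G_k/G_{k,0}$ has some finite order $m_k\mid e_k$; hence $\pi_k(F^{m_k})\in G_{k,0}$. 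The first key point is that these orders are uniformly bounded: the projections $\pi_{k,l}\colon D_k\to D_l$ carry $G_k$ onto a Zariski-dense (hence, by abelianness and algebraicity, finite-index) subgroup of $G_l$ and $G_{k,0}$ into $G_{l,0}$, so $m_l\mid m_k$; moreover one shows the $m_k$ stabilize — this is where Binyamini's argument (or the finiteness results of \cite{JR:finite}) is invoked — so there is a single $m$ with $\pi_k(F^m)\in G_{k,0}$ for all $k$, i.e. $F^m\in\overline{\langle F\rangle}_0$ by Definition \ref{def:Zariski-closure}.

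The second ingredient is Lemma \ref{lem:ccifl}: because $\langle F\rangle$ is abelian, $\overline{\langle F\rangle}$ is abelian, its Lie algebra ${\mathfrak g}$ is abelian, and consequently $\overline{\langle F\rangle}_0=\Exp({\mathfrak g})$. Therefore $F^m=\Exp(X)$ for some $X\in{\mathfrak g}$, and by definition such an $X$ is an infinitesimal generator of $F^m$. This is exactly the assertion of the theorem, so the proof reduces entirely to producing the single exponent $m$ with $F^m\in\overline{\langle F\rangle}_0$ and then quoting Lemma \ref{lem:ccifl}.

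The main obstacle is the uniform boundedness of the $m_k$ — a priori the component groups $G_k/G_{k,0}$ could grow without bound as $k\to\infty$, and the images of $\pi_k(F)$ in them could have unbounded order, in which case no finite iterate of $F$ would land in $\overline{\langle F\rangle}_0$. The resolution uses that the $\pi_{k,l}$ induce surjections $G_k/G_{k,0}\twoheadrightarrow G_l/G_{l,0}$ compatible with the class of $F$, together with a structural constraint on how large the component group of the Zariski-closure of a cyclic group in $D_k$ can be; concretely, one controls it through the (fixed, $k$-independent) data of the semisimple part of the linear part $D_0F$, since the potential torsion in the component group comes from roots of unity among ratios of the eigenvalues of $D_0 F$ and from the finite torsion produced at the linear level. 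Once $m$ is taken to kill all of this — which happens at a finite level and then persists — the limit argument closes. I would present the level-$k$ computation in detail for one $k$, note the compatibility of the $\pi_{k,l}$, cite \cite{Binyamini:finite} (equivalently the finiteness statements behind Proposition \ref{pro:axu}) for the stabilization, and finish with one line invoking Lemma \ref{lem:ccifl}.
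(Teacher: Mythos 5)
Your proposal is correct and follows essentially the same route as the paper: the paper's proof simply invokes Proposition \ref{pro:axu}(i) to get that $\overline{\langle F\rangle}_0$ has finite index in $\overline{\langle F\rangle}$, hence $F^m\in\overline{\langle F\rangle}_0$ for some $m$, and then concludes by Lemma \ref{lem:ccifl}, exactly as you do. Your level-by-level discussion of the quotients $G_k/G_{k,0}$ and their stabilization is just an unpacking of that finite-index statement (whose substance, as you note, is the cited finiteness result controlled by the linear part $D_0F$), so no new gap or divergence arises.
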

\begin{proof}
Since $\overline{\langle F \rangle}_0$ is a finite index normal subgroup of $\overline{\langle F \rangle}$
by Proposition \ref{pro:axu},
there exists $m \in {\mathbb N}$ such that $F^{m} \in \overline{\langle F \rangle}_0$. The result is a
consequence of Lemma \ref{lem:ccifl}.
\end{proof}
The existence of infinitesimal generator is well-known for unipotent formal diffeomorphisms, see for example \cite{Ecalle} or \cite{MaRa:aen}
for the one dimensional case.
\begin{lemma} [{\cite[Theorem 3.17]{Ilya-Y}}] \label{lem:unipotentdiffeo}
Let $F\in \Diffh\Cd n$ be a formal diffeomorphism whose linear part is unipotent, i.e. $\Spec (D_0 F)= \{1\}$. Then 
there exists a unique nilpotent $X \in \cvf\Cd n$ (i.e.
a formal vector field $X$ with $\Spec(D_0 X)= \{0\}$) such that $F = \Exp(X)$. 
\end{lemma}
\begin{remark}\label{rem:unipotentdiffeo}
The formal vector field $X$ in Lemma  \ref{lem:unipotentdiffeo}
belongs to the Lie algebra of $\overline{\langle F \rangle}$ (see \cite[Lemma 1]{JR:arxivdl}), i.e. X is an infinitesimal generator of $F$.
\end{remark}

\subsection{The index of embeddability}\label{sec:index_emb}
Given a formal diffeomorphism $F\in\Diffh\Cd n$, we define the \emph{index of embeddability in a flow} of $F$ as the minimum of the indexes $m\in\N$ such that $F^m$ has an infinitesimal generator. We denote it by $m(F)$, or simply by $m$ when $F$ is implicit. Observe that if $\Spec(D_0F)=\{1\}$ then $m(F)=1$, by Remark~\ref{rem:unipotentdiffeo}. Note also that, by Lemma~\ref{lem:ccifl}, $m(F)$ is the minimum $m$ such that $F^m\in\overline{\langle F \rangle}_0$.
The following remark allows to calculate the index of embeddability of $F$ and its iterates.
\begin{remark}
\label{rem:cc_iter}
Let $F \in  \Diffh\Cd n$ and $r \in {\mathbb Z}^{*}$.
Observe that each coset $F^j \overline{\langle F^r \rangle}$ is a pro-algebraic set, i.e. a projective limit of algebraic sets, since $\overline{\langle F^r \rangle}$ is pro-algebraic.
Moreover, since $\overline{\langle F \rangle}$ is abelian,
we have that the set $\cup_{j=0}^{r-1} F^j \overline{\langle F^r \rangle}$
is a pro-algebraic group and then
$\overline{\langle F \rangle}=\cup_{j=0}^{r-1} F^j \overline{\langle F^r \rangle}$.
Therefore,
$\overline{\langle F^r \rangle}$ is a finite index subgroup of
$\overline{\langle F \rangle}$ and $\overline{\langle F \rangle}/\overline{\langle F^r \rangle}$
is a cyclic group generated by the class of $F$.
Analogously $\overline{\langle F \rangle}/\overline{\langle F^r \rangle}_0$ is cyclic, we just
need to replace  $\cup_{j=0}^{r-1} F^j \overline{\langle F^r \rangle}$
with  $\cup_{j=0}^{s-1} F^j \overline{\langle F^r \rangle}_0$ above where $s$
satisfies $F^s \in \overline{\langle F^r \rangle}_0$.
In particular, we obtain $m(F)=|\overline{\langle F \rangle} : \overline{\langle F \rangle}_0|$.

 It is clear that
$\overline{\langle F^r \rangle}_0 \subset \overline{\langle F \rangle}_0$.
Since  $\overline{\langle F^r \rangle}$ is a finite index subgroup of
 $\overline{\langle F \rangle}$ and $\overline{\langle F^r \rangle}_0$ is a finite index
 subgroup of $\overline{\langle F^r \rangle}$ (Proposition \ref{pro:axu}), we deduce
 that $\overline{\langle F^r \rangle}_0$ is a finite index subgroup of
 $\overline{\langle F \rangle}$. This implies
  $\overline{\langle F \rangle}_0 \subset \overline{\langle F^r \rangle}_0$
  by Proposition \ref{pro:axu} and hence
  $\overline{\langle F \rangle}_0 = \overline{\langle F^r \rangle}_0$.

  By definition $m (F^r)$ is the smallest positive integer $m'$ such that
$F^{r m'} \in  \overline{\langle F^r \rangle}_0 =\overline{\langle F \rangle}_0$.
In particular, we obtain $m(F^r)= m(F) / gcd (m(F),r)$ and $F^r$ has an infinitesimal generator
if and only if  $m(F) $ divides $r$.
\end{remark}

On the other hand, the construction of $\overline{\langle F \rangle}$ implies that the group $D_0 \overline{\langle F \rangle}$ of linear parts
of the elements of $\overline{\langle F \rangle}$ is equal to $\overline{\langle D_0 F \rangle}$, where
the Zariski-closure of the group $\langle D_0 F \rangle$ is considered in $\mathrm{GL}(n,{\mathbb C})$, and we have
\[ |\overline{\langle F \rangle} : \overline{\langle F \rangle}_0| = |\overline{\langle D_0 F \rangle} : \overline{\langle D_0 F \rangle}_0|, \]
(see \cite[Proposition 2.3]{JR:solvable25}). As a consequence, the value of $m(F)=|\overline{\langle D_0 F \rangle} : \overline{\langle D_0 F \rangle}_0|$ depends only on $D_0 F$.
We recall the computation of $|\overline{\langle D_0 F \rangle} : \overline{\langle D_0 F \rangle}_0|$ for the sake of completeness in order to see that it depends only on the
eigenvalues of $D_0 F$.

Given a matrix
$A \in \mathrm{GL}(n,{\mathbb C})$, it admits a unique multiplicative Jordan decomposition
of the form $A = A_s A_u = A_u A_s$ where $A_s, A_u \in \mathrm{GL}(n,{\mathbb C})$, $A_s$
is diagonalizable and $A_u$ is unipotent, i.e. $\mathrm{spec} (A_u)=\{1\}$.
Since $\overline{\langle A \rangle}$ is isomorphic to
$\overline{\langle A _s \rangle} \times \overline{\langle A_u \rangle}$ and
$\overline{\langle A_u \rangle}$ is always connected, we get
\[ \overline{\langle A \rangle} = \overline{\langle A \rangle}_0 \Leftrightarrow
\overline{\langle A_s \rangle} = \overline{\langle A_s \rangle}_0. \]
Up to a linear change of coordinates, we can suppose
$A_s = \mathrm{diag} (\lambda_1, \hdots, \lambda_n)$,
where
$\mathrm{spec}(A)=\{\lambda_1, \hdots, \lambda_n\}$.
Denote $G_{\lambda} = \langle \mathrm{diag} (\lambda_1, \hdots, \lambda_n) \rangle$.
Consider the characters $\chi_{(m_1,\hdots,m_n)}: ({\mathbb C}^{*})^n \to {\mathbb C}^{*}$ of the torus $({\mathbb C}^{*})^n$
given by $\chi_{(m_1,\hdots,m_n)}(\mu_1, \hdots, \mu_n) = \mu_1^{m_1} \hdots \mu_n^{m_n}$ for
$(m_1, \hdots, m_n) \in {\mathbb Z}^{n}$. There exists a one-to-one correspondence between
algebraic subgroups of  $({\mathbb C}^{*})^n$ and subgroups of characters
(see \cite[Theorem 3.2.3.5]{alg.lie:Onis-Vinb}). Indeed given a subgroup $H$ of the group of characters,
the set $G=\cap_{\chi \in H} \mathrm{Ker} (\chi)$ is an algebraic subgroup of
$ ({\mathbb C}^{*})^n$ such that $H= \{ \chi : G \subset  \mathrm{Ker} (\chi) \}$.
As a consequence of the definition of the correspondence, we deduce
\[ \overline{G}_{\lambda} =
\{ \mathrm{diag} (\mu_1, \hdots, \mu_n) : \mu_1^{m_1} \hdots \mu_n^{m_n} = 1 \
\forall (m_1, \hdots, m_n) \in S_{\lambda} \} \]
where 
\begin{equation}\label{eq:setres}
S_{\lambda}= \{(m_1, \hdots, m_n) \in {\mathbb Z}^{n} : \lambda_1^{m_1} \hdots \lambda_n^{m_n} = 1\}
\end{equation}
is the set of resonances of $D_0 F$.  

The Lie algebra
${\mathfrak g}_{\lambda}= \{ \mathrm{diag} (v_1, \hdots, v_n) :  \mathrm{diag} (e^{v_1}, \hdots, e^{v_n})
\in \overline{G}_{\lambda} \}$ of
$\overline{G}_{\lambda}$ is given by
\begin{equation}
\label{equ:form_Lie_alg}
{\mathfrak g}_{\lambda}  =
\{ \mathrm{diag} (v_1, \hdots, v_n) : m_1 v_1 + \hdots + m_n v_n = 0 \ \forall (m_1, \hdots, m_n) \in S_{\lambda}\} .
\end{equation}
Let $S_{\lambda}'$ be the intersection of ${\mathbb Z}^{n}$ with the ${\mathbb Q}$-vector space generated by
$S_{\lambda}$. Notice that we can replace $S_{\lambda}$ with
$S_{\lambda}'$ in equation (\ref{equ:form_Lie_alg}).
Since $\overline{G}_{\lambda,0} =
\mathrm{exp} ({\mathfrak g}_{\lambda})$,
we have
\[ \overline{G}_{\lambda,0} =
\{ \mathrm{diag} (\mu_1, \hdots, \mu_n) : \mu_1^{m_1} \hdots \mu_n^{m_n} = 1 \
\forall (m_1, \hdots, m_n) \in S_{\lambda}' \} . \]
By construction, we get
\[ S_{\lambda}' = \{ (m_1, \hdots, m_n) \in {\mathbb Z}^{n} : \lambda_1^{m_1} \hdots \lambda_n^{m_n} \ \mathrm{is \ a \ root \
 of \ unity} \} . \]
Moreover, $R_{\lambda}=\{ \lambda_1^{m_1} \hdots \lambda_n^{m_n} : (m_1, \hdots, m_n) \in S_{\lambda}' \}$ is a finite subgroup of ${\mathbb S}^{1}$,
$S_{\lambda}$ is a finite index subgroup of $S_{\lambda}'$ and
\[   |\overline{G}_{\lambda}: \overline{G}_{\lambda,0}|  = |S_{\lambda}':S_{\lambda}| = |R_{\lambda}| . \]
Hence, we obtain $m(F) =  |S_{\lambda}':S_{\lambda}| = |R_{\lambda}|$ if
$\mathrm{spec} (D_0 F)= \{\lambda_1, \hdots, \lambda_n \}$.
 \begin{remark}\label{rk:eigenvalues-iteration}
 Consider $F  \in \mathrm{Diff} ({\mathbb C}^{n},0)$ and let $m$ be the index of embeddability of $F$. 
 Since the group $R_{\lambda}$ associated to $ F^{m}$ is the
 trivial group, every eigenvalue of  $D_0 F^{m}$
 that is a root of unity is indeed equal to $1$.
 \end{remark}

\begin{remark}
Assume that $\mathrm{spec} (D_0 F)= \{\lambda_1, \hdots, \lambda_n \}$ and $S_{\lambda}=\{ 0\}$, i.e. the eigenvalues have no resonances. Then $m(F)=1$.
\end{remark}

\subsection{Characterization of the infinitesimal generator}
It is not clear, from Definition~\ref{def:infgen}, whether a formal vector field $X$ such that $F=\Exp(X)$ is an
infinitesimal generator of $F$. Let us show that infinitesimal generators are determined by their linear
parts.
\begin{proposition}
\label{pro:car_inf_gen}
Let $F \in \Diffh\Cd n$. Then
\begin{enumerate}[(i)]
\item $X \in \hat{\mathfrak X} ({\mathbb C}^{n},0)$ is an infinitesimal generator of $F$ if and only if
$F= \Exp(X)$ and $D_0 X$ is in the Lie algebra of the matrix group $\overline{\langle D_0 F \rangle}$.
\item There is a bijective correspondence $X \mapsto D_0 X$ between infinitesimal generators of $F$
and matrices $M$ in the Lie algebra of $\overline{\langle D_0 F \rangle}$ such that
$\Exp(M) = D_0 F$.
\end{enumerate}
\end{proposition}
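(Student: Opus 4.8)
The plan is to reduce everything to the abelian structure of $\overline{\langle F\rangle}$ and the fact, established above in the computation of $m(F)$, that linear parts and the pro-algebraic structure are compatible, i.e. $D_0\overline{\langle F\rangle}=\overline{\langle D_0F\rangle}$ and the quotient indices agree. First I would prove part (i). The implication ``$\Rightarrow$'' is nearly immediate: if $X$ is an infinitesimal generator then $X$ lies in the Lie algebra ${\mathfrak g}$ of $\overline{\langle F\rangle}$, and applying the linear-part homomorphism $D_0:\Diffh\Cd n\to\mathrm{GL}(n,\C)$ (which carries $\Exp(tX)$ to $\Exp(tD_0X)$) together with the identity $D_0\overline{\langle F\rangle}=\overline{\langle D_0F\rangle}$ shows $\Exp(tD_0X)\in\overline{\langle D_0F\rangle}$ for all $t$, hence $D_0X$ is in the Lie algebra of $\overline{\langle D_0F\rangle}$; and of course $F=\Exp(X)$ by definition. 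For the converse, suppose $F=\Exp(X)$ and $M:=D_0X$ lies in the Lie algebra of $\overline{\langle D_0F\rangle}$. I want to show $X\in{\mathfrak g}$, equivalently $\Exp(tX)\in\overline{\langle F\rangle}$ for all $t\in\C$. The key point is that the group $\overline{\langle F\rangle}_0$ is generated by $\Exp({\mathfrak g})$ (Proposition~\ref{pro:axu}(iii)) and, since $\langle F\rangle$ is abelian, ${\mathfrak g}$ is abelian so $\Exp({\mathfrak g})=\overline{\langle F\rangle}_0$ is itself a group (Lemma~\ref{lem:ccifl}).

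The heart of the argument is therefore: produce an element $Y\in{\mathfrak g}$ with $D_0Y=M$ and $\Exp(Y)=F$, and then show $X=Y$. For existence of such a $Y$: since $F^{m}\in\overline{\langle F\rangle}_0=\Exp({\mathfrak g})$, write $F^{m}=\Exp(Z)$ with $Z\in{\mathfrak g}$; then $D_0F^m=\Exp(D_0Z)$ with $D_0Z$ in the Lie algebra of $\overline{\langle D_0F\rangle}$, and I need to adjust $Z$ by $(1/m)$ plus a correction supported on the kernel of the exponential of the matrix group so that the linear part becomes $M$ and the diffeomorphism becomes $F$; here I would use that $\overline{\langle D_0F\rangle}_0$ is a (connected) abelian linear group, on which $\exp$ restricted to its Lie algebra is surjective, together with the resonance description of the Lie algebra ${\mathfrak g}_\lambda$ given in equation~(\ref{equ:form_Lie_alg}) to see that the ``difference'' $M-\frac1m D_0Z$ is realized by a vector field in ${\mathfrak g}$ whose flow is trivial (a nilpotent-type correction vanishes at the diffeomorphism level precisely because the corresponding matrix exponential is the identity and $\overline{\langle F\rangle}_0=\Exp({\mathfrak g})$ is a group). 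Once $Y\in{\mathfrak g}$ with $F=\Exp(Y)$ and $D_0Y=M$ is in hand, uniqueness $X=Y$ follows from the standard fact that a formal diffeomorphism determines its infinitesimal generator once the linear part $M$ of the latter is fixed: indeed $\Exp(X)=\Exp(Y)$ with $D_0X=D_0Y=M$ forces, after conjugating to put $M$ in Jordan form and separating $M=M_s+M_n$, that $X$ and $Y$ agree degree by degree (this is the usual ``$\Exp$ is injective on vector fields with prescribed semisimple linear part'' argument, cf. Remark~\ref{rem:unipotentdiffeo} in the unipotent case and the general case by passing to the resonant normal form). This gives $X\in{\mathfrak g}$, proving (i).

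For part (ii), the map $X\mapsto D_0X$ from infinitesimal generators of $F$ to matrices in the Lie algebra of $\overline{\langle D_0F\rangle}$ lands in the set $\{M:\Exp(M)=D_0F\}$ because $F=\Exp(X)$ implies $D_0F=\Exp(D_0X)$. Surjectivity onto that set is exactly the existence statement proved inside (i): given such an $M$, the $Y$ constructed above is an infinitesimal generator with $D_0Y=M$. Injectivity is the uniqueness statement proved inside (i): two infinitesimal generators of $F$ with the same linear part coincide. I expect the main obstacle to be the existence half — producing $Y\in{\mathfrak g}$ realizing a prescribed admissible linear part $M$ and the correct diffeomorphism $F$ simultaneously — since this requires combining the surjectivity of $\exp$ on the connected abelian linear group $\overline{\langle D_0F\rangle}_0$ with the fact that the kernel of the exponential at the Lie-algebra level is matched, inside ${\mathfrak g}$, by vector fields whose time-one flow is the identity; the bookkeeping of resonances via $S_\lambda,S_\lambda'$ from the preceding subsection is what makes this work, but it has to be done carefully.
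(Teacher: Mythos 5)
Your overall skeleton (produce $Y\in{\mathfrak g}$ with $D_0Y=M$ and $\Exp(Y)=F$, then prove uniqueness of an infinitesimal generator with prescribed linear part) is the same as the paper's, but both key steps have genuine gaps. The uniqueness step rests on a ``standard fact'' that is false as you state it: $\Exp(X)=\Exp(Y)$ together with $D_0X=D_0Y=M$ does \emph{not} force $X=Y$ for arbitrary formal vector fields. In dimension one, $X=2\pi i\,x\,\partial/\partial x$ and $Y=2\pi i\,(x+x^2)\,\partial/\partial x$ have the same linear part, and both have time-one map equal to the identity (the second is formally linearizable, since a one-dimensional vector field with non-zero linear coefficient has no resonances), yet $X\neq Y$. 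Injectivity of $\Exp$ with prescribed linear part holds for nilpotent linear parts, or under the absence of weak resonances --- which is precisely what Theorem~\ref{teo:car_inf_gen} later isolates --- but not in general, so a ``degree by degree'' comparison after putting $M$ in Jordan form cannot work. The paper's claim is carefully weaker: it assumes $Y\in{\mathfrak g}$, proves that the stabilizer $H=\{L\in\Diffh\Cd n: L_*X=X\}$ is pro-algebraic (via Proposition~\ref{pro:criterium}), deduces $\overline{\langle F\rangle}\subset H$ and hence $\Exp(tY)_*X=X$ for all $t$, so $[X,Y]=0$; only then does $\Exp(X-Y)=\Id$ with $D_0(X-Y)=0$ give $X=Y$ by uniqueness of the nilpotent logarithm of the identity. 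This commutation step is exactly what your argument is missing.

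The existence step is also not established. What you need is $Y\in{\mathfrak g}$ with $D_0Y=M$ and $\Exp(Y)=F$, and your route via $F^m=\Exp(Z)$, division by $m$ and a ``correction supported on the kernel of the exponential of the matrix group'' begs the question twice. First, realizing $M$ (or the difference $M-\tfrac1m D_0Z$) as the linear part of an element of ${\mathfrak g}$ is precisely the lifting problem $d\pi_1:{\mathfrak g}\to{\mathfrak g}_1$ being surjective; this is the technical heart of the paper's proof (surjectivity of $\pi_{k,l}:G_k\to G_l$, hence of $(d\pi_{k,l})_{\Id}:{\mathfrak g}_k\to{\mathfrak g}_l$, plus a projective-limit argument), and it is not addressed by the surjectivity of $\exp$ on the connected abelian group $\overline{\langle D_0F\rangle}_0$ nor by the $S_\lambda,S_\lambda'$ bookkeeping, all of which live in the finite-dimensional quotient. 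Second, even if you had $W\in{\mathfrak g}$ whose linear part exponentiates to the identity matrix, its time-one flow need not be the identity diffeomorphism, so the proposed correction does not obviously vanish at the diffeomorphism level; and your detour through $F^m$ additionally requires knowing $F\in\Exp({\mathfrak g})$, which you never justify. The paper's construction avoids all of this: lift $M$ to any $Y_0\in{\mathfrak g}$ (by the surjectivity just mentioned), note that $F\circ\Exp(-Y_0)$ is tangent to the identity and belongs to $\overline{\langle F\rangle}$, so its unique nilpotent infinitesimal generator $Z$ lies in ${\mathfrak g}$ (Remark~\ref{rem:unipotentdiffeo}), and set $Y=Y_0+Z$; abelianity of ${\mathfrak g}$ then gives $\Exp(Y)=F$ and $D_0Y=M$. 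I would reorganize your proof along these lines.
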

\begin{proof}
We denote by $G$ the group $\langle F \rangle$ and by ${\mathfrak g}$ the Lie algebra of
$\overline{G}$, which is
abelian by \cite[Proposition 3]{JR:arxivdl}. Let us remark that $G_1$ (see Definition \ref{def:Zariski-closure})  
can be identified with $\overline{\langle D_0 F \rangle}$ and hence
${\mathfrak g}_1$ can be identified with the Lie algebra of  $\overline{\langle D_0 F \rangle}$.

Consider $M \in \mathrm{GL}(n,{\mathbb C})$ such that $M \in {\mathfrak g}_1$ and
$\Exp(M) = D_0 F$. Let us show that there exists $X \in {\mathfrak g}$ such that
$\Exp(X)=F$ and $D_0 X = M$. Since $\pi_{k,l} : \pi_k (G) \to \pi_l (G)$
is surjective, so is $\pi_{k,l}: G_k \to G_l$ and hence
$(d \pi_{k,l})_{\mathrm{Id}}: {\mathfrak g}_k \to {\mathfrak g}_l$ is also surjective for all $k \geq l$.
As a consequence, the map $d \pi_k : {\mathfrak g} \to {\mathfrak g}_k$ is surjective for any
$k \geq 1$. Thus there exists $Y \in {\mathfrak g}$ such that $D_0 Y = M$.
The formal diffeomorphism $F'= F \circ \Exp(-Y)$ is clearly tangent to the identity and
belongs to $\overline{G}$.  There exists a unique  formal vector field $Z$ with vanishing linear part
such that $F' = \Exp(Z)$ by Lemma \ref{lem:unipotentdiffeo}. Notice that $Z$ belongs to
the Lie algebra of $\overline{\langle F' \rangle}$ by Remark \ref{rem:unipotentdiffeo}
and hence to ${\mathfrak g}$ since $\overline{\langle F' \rangle} \subset \overline{\langle F \rangle}$.
Since  ${\mathfrak g}$ is abelian, it follows that
\[ F =\Exp(Z) \circ \Exp(Y) = \Exp(Y+ Z) \]
where $Y+Z \in {\mathfrak g}$ and $D_0 (Y + Z)= D_0 Y = M$.
Therefore the correspondence defined in (ii) is surjective.

We claim that if
$X \in  \hat{\mathfrak X} ({\mathbb C}^{n},0)$, $Y \in  {\mathfrak g}$, $D_0 X = D_0 Y$ and
$\Exp(X)= \Exp(Y) = F$ then $X=Y$. The claim implies that the
correspondence in (ii) is injective.

 Assume the claim is proven. The necessary condition in property (i) is clear.
Let us show the sufficient condition in (i).
Since the correspondence in (ii)
is surjective, there exists $Y \in  {\mathfrak g}$ such that $D_0 Y = D_0 X$ and
$F= \mathrm{exp}(Y)$. Now the claim implies $X=Y$ and hence $X$ belongs to $ {\mathfrak g}$
and is an infinitesimal generator of $F$.

Let us show the claim.
Since $F = \Exp(X)$, we get $F_{*} X = X$.
We define
\[ H_k = \{ L \in D_k : J_k (L_{*} X) = J_k X \}.  \]
It is an algebraic subgroup of $D_k$ since the condition
$J_k (L_{*} X) = J_k X$ can be expressed as a finite number of
algebraic equations in the coefficients of the Taylor series expansion of $L$ of degree less than or equal
to $k$ for any $k \geq 1$.
Clearly, $\pi_{k,l} (H_k) \subset H_l$ is satisfied for all $k \geq l \geq 1$.
We have then that the group
\[ H=  \varprojlim H_k =  \{ L \in \Diffh\Cd n : L_{*} X = X \}  . \]
is a pro-algebraic subgroup of $\Diffh\Cd n$ by Proposition
\ref{pro:criterium}.
Notice that $G$ is a subgroup of $H$ and thus $\overline{G}$ is also a subgroup of  $H$.
Since $Y \in {\mathfrak g}$, we obtain
$\Exp (t Y) \in \overline{G} \subset H$ by Proposition \ref{pro:axu} and hence
$\Exp(t Y)_{*} X = X$ for any $t \in {\mathbb C}$.
This implies $[X, Y]=0$.  We have
\[ \mathrm{Id} = F \circ F^{-1} = \Exp (X) \circ \Exp (-Y) = \Exp(X-Y) \]
where we used $[X,Y]=0$ in the last equality. Since $D_0 (X-Y)=0$, the vector field $X-Y$
is the unique nilpotent vector field whose exponential is the identity map, i.e. $X-Y=0$.
\end{proof}
\begin{definition}
Let $X\in\cvf\Cd n$ with $\mathrm{spec} (D_0 X)= \{ \mu_1, \hdots, \mu_n\}$. We say that
$X$ is \emph{not weakly resonant} if there is no $(m_1, \hdots, m_n) \in {\mathbb Z}^{n}$
such that $\sum_{j=1}^{n} m_j \mu_j \in 2 \pi i {\mathbb Q}^{*}$.
\end{definition}
Similar, but slightly different, conditions of
absence of weak resonances have appeared in the literature
when trying to solve the equation
$\mathrm{exp} (X) =F$ where $F$ and $D_0 X$ are fixed (\cite{Zha}, \cite{rib-embedding}).
Next, we characterize the formal vector fields that are infinitesimal generators.
\begin{theorem}
\label{teo:car_inf_gen}
Let $X\in\cvf\Cd n$ with $\mathrm{spec} (D_0 X)= \{ v_1, \hdots, v_n\}$.
Then $X$ is an infinitesimal generator of $\mathrm{exp}(X)$ if and only if
$X$ is not weakly resonant.
\end{theorem}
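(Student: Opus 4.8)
The plan is to deduce Theorem~\ref{teo:car_inf_gen} from the characterization of infinitesimal generators in Proposition~\ref{pro:car_inf_gen} together with the purely linear (torus/character) computation carried out in Subsection~\ref{sec:index_emb}. Set $F=\Exp(X)$ and $M=D_0X$. The linear part of a formal flow is the flow of the linear part, so $D_0F=D_0\Exp(X)=\Exp(M)$, the \emph{matrix} exponential of $M$; write $A=\Exp(M)$. Since $F=\Exp(X)$ holds by construction, Proposition~\ref{pro:car_inf_gen}(i) says that $X$ is an infinitesimal generator of $F=\Exp(X)$ if and only if $M$ lies in the Lie algebra of the matrix group $\overline{\langle D_0F\rangle}=\overline{\langle A\rangle}$ (Zariski closure in $\mathrm{GL}(n,\C)$). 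So the whole question reduces to the linear one: when is $M\in\mathrm{Lie}(\overline{\langle A\rangle})$?

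First I would strip off the nilpotent part. Let $M=M_s+M_n$ be the additive Jordan decomposition ($M_s$ semisimple, $M_n$ nilpotent, $[M_s,M_n]=0$); then $\Exp(M_s)$ is semisimple, $\Exp(M_n)$ is unipotent and the two commute, so by uniqueness of the multiplicative Jordan decomposition $A=A_sA_u$ with $A_s=\Exp(M_s)$ and $A_u=\Exp(M_n)$. As recalled in Subsection~\ref{sec:index_emb}, $\overline{\langle A\rangle}\cong\overline{\langle A_s\rangle}\times\overline{\langle A_u\rangle}$ with $\overline{\langle A_u\rangle}$ connected, so $\mathrm{Lie}(\overline{\langle A_u\rangle})$ is the line $\C M_n$ (or $\{0\}$ if $M_n=0$) and in particular contains $M_n$; moreover $\mathrm{Lie}(\overline{\langle A_s\rangle})$ consists of semisimple matrices, $\mathrm{Lie}(\overline{\langle A_u\rangle})$ of nilpotent ones, and they commute. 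Hence, by uniqueness of the additive Jordan decomposition, $M\in\mathrm{Lie}(\overline{\langle A\rangle})$ if and only if $M_s\in\mathrm{Lie}(\overline{\langle A_s\rangle})$.

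Since Lie-algebra membership in an algebraic matrix group is conjugation-invariant, I may assume $M_s=\diag(v_1,\ldots,v_n)$, hence $A_s=\diag(\lambda_1,\ldots,\lambda_n)$ with $\lambda_j=e^{v_j}$, where $\mathrm{spec}(D_0X)=\{v_1,\ldots,v_n\}$. Then $\overline{\langle A_s\rangle}=\overline{G}_\lambda$ and, by the computation leading to \eqref{equ:form_Lie_alg} (with $S_\lambda$ replaced by $S_\lambda'$), its Lie algebra is
\[
\mathfrak{g}_\lambda=\{\diag(w_1,\ldots,w_n): m_1w_1+\cdots+m_nw_n=0\ \ \forall\,(m_1,\ldots,m_n)\in S_\lambda'\},
\]
where $S_\lambda'=\{(m_1,\ldots,m_n)\in\Z^n:\lambda_1^{m_1}\cdots\lambda_n^{m_n}\text{ is a root of unity}\}$. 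Since $\lambda_1^{m_1}\cdots\lambda_n^{m_n}=e^{m_1v_1+\cdots+m_nv_n}$ is a root of unity precisely when $\sum_jm_jv_j\in2\pi i\Q$, the condition $M_s\in\mathfrak{g}_\lambda$ is equivalent to: for every $(m_1,\ldots,m_n)\in\Z^n$, if $\sum_jm_jv_j\in2\pi i\Q$ then $\sum_jm_jv_j=0$; that is, to the non-existence of $(m_1,\ldots,m_n)\in\Z^n$ with $\sum_jm_jv_j\in2\pi i\Q^*$, which is exactly the condition that $X$ be not weakly resonant. Chaining the three equivalences yields the theorem.

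The argument has no deep obstacle: the substantive content is already contained in Proposition~\ref{pro:car_inf_gen} and in the torus/character computation of Subsection~\ref{sec:index_emb}. The only points that need care, and that I would write out explicitly, are the identity $D_0\Exp(X)=\Exp(D_0X)$ and the compatibility of the matrix exponential with the two Jordan decompositions used to reduce from $M$ to $M_s$ (so that $A_s=\Exp(M_s)$, $A_u=\Exp(M_n)$); the rest is a translation between the multiplicative language of $\overline{\langle D_0F\rangle}$ and the additive language of the eigenvalues of $X$.
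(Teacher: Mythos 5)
Your proposal is correct and follows essentially the same route as the paper's proof: reduce via Proposition~\ref{pro:car_inf_gen} to the linear question of whether $D_0X$ lies in the Lie algebra of $\overline{\langle D_0\Exp(X)\rangle}$, split off the nilpotent part using the compatibility of the Jordan decompositions (with the Lie algebra decomposing into its semisimple and nilpotent parts), and conclude by the torus/character computation of Subsection~\ref{sec:index_emb}. The only cosmetic difference is that you phrase the final equivalence directly through the $S_\lambda'$-description of $\mathfrak{g}_\lambda$, whereas the paper argues the two implications separately using $S_\lambda$; these are the same computation.
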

\begin{proof}
Let $F= \mathrm{exp}(X)$. The formal vector field $X$ is an infinitesimal generator of $F$ if
and only if  $D_0 X$ is in the Lie algebra of
$\overline{\langle D_0 F \rangle}$ by Proposition \ref{pro:car_inf_gen}.
Let   $D_0 X = S + N$ be the additive Jordan decomposition of $D_0 X$ as a sum of
a semisimple and a nilpotent linear operators that commute. Assume that $S$ is diagonal up
to a change of basis. Notice that $D_0 F = \mathrm{exp}(S) \mathrm{exp}(N)$ is the
multiplicative Jordan decomposition of $D_0 F$.
Denote by ${\mathfrak g}$, ${\mathfrak g}_S$ and ${\mathfrak g}_N$ the Lie algebras of
$\overline{\langle D_0 F \rangle}$,
$\overline{\langle  \mathrm{exp}(S) \rangle}$ and $\overline{\langle  \mathrm{exp}(N) \rangle}$
respectively.
It is well known that  ${\mathfrak g} = {\mathfrak g}_S \oplus {\mathfrak g}_N$
and $ {\mathfrak g}_N$ is the complex vector space generated by $N$
\cite[sections 3.2.2, 3.2.4 and 3.3.7]{alg.lie:Onis-Vinb}.
Moreover, since all the elements of  ${\mathfrak g}_S$ (resp.  ${\mathfrak g}_N$) are
semisimple (resp. nilpotent), ${\mathfrak g}$ is abelian
and the additive Jordan decomposition is unique, we deduce
that ${\mathfrak g}_S$ (resp.  ${\mathfrak g}_N$) is the set of semisimple (resp. nilpotent)
elements of ${\mathfrak g}$.
As a consequence, $D_0 X$ is in the Lie algebra of $\overline{\langle D_0 F \rangle}$
if and only if $S$ is in the Lie algebra of $\overline{\langle  \mathrm{exp}(S) \rangle}$.
Notice that  
\[ S = \diag (v_1, \hdots, v_n) \ \ \mathrm{and} \ \
\mathrm{exp} (S) = \diag (\lambda_1, \hdots, \lambda_n), \]
where $\lambda_j = e^{v_j}$ for $1 \leq j \leq n$.
Since  $\lambda_1^{m_1} \hdots \lambda_n^{m_n} = e^{m_1 v_1 + \hdots + m_n v_n}$, we deduce
\[ S_{\lambda} = \{
(m_1, \hdots, m_n) \in {\mathbb Z}^n :
m_1 v_1 + \hdots + m_n v_n \in 2 \pi i {\mathbb Z}
\} \]
where $S_{\lambda}$ is defined in equation~\eqref{eq:setres}.
Recall that ${\mathfrak g}_S=\mathfrak g_{\lambda}$ is given by equation
(\ref{equ:form_Lie_alg}).
It is clear that if $X$ is not weakly resonant then $S \in {\mathfrak g}_S$.

Suppose $S \in {\mathfrak g}_S$. Let $(m_1, \hdots, m_n)$ with
$\sum_{j=1}^{n} m_j v_j \in 2 \pi i {\mathbb Q}$. Up to multiplication by a non-zero integer, we
can assume $\sum_{j=1}^{n} m_j v_j \in 2 \pi i {\mathbb Z}$.
This implies $\lambda_1^{m_1} \hdots \lambda_n^{m_n} =1$ and hence
$(m_1, \hdots, m_n) \in S_{\lambda}$.
We deduce that $\sum_{j=1}^{n} m_j v_j =0$ by the description of ${\mathfrak g}_S$.
\end{proof}

In the following proposition we prove that the infinitesimal generator is unique only in the unipotent case. This is the reason justifying why infinitesimal generators have been considered exclusively for unipotent diffeomorphisms in the literature.
\begin{proposition}
Consider $F\in\Diffh\Cd n$ such that $m(F)=1$. The infinitesimal generator of $F$ is unique if and only if $F$ is unipotent. Otherwise, $F$ has infinitely many infinitesimal generators.
\end{proposition}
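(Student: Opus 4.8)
The plan is to transport the whole question to the linear part using Proposition~\ref{pro:car_inf_gen}(ii), which identifies the set of infinitesimal generators of $F$ with the set of matrices $M$ in the Lie algebra $\mathfrak{g}_1$ of $\overline{\langle D_0F\rangle}$ satisfying $\Exp(M)=D_0F$. Since $m(F)=1$ this set is nonempty; fix once and for all some $M_0$ in it, so $\Exp(M_0)=A$ where $A:=D_0F$. I would then recall the structure of $\mathfrak{g}_1$ exploited in the proof of Theorem~\ref{teo:car_inf_gen}: it is abelian and splits as $\mathfrak{g}_1=\mathfrak{g}_S\oplus\mathbb{C}N_0$, where $\mathfrak{g}_S$ is the set of semisimple elements of $\mathfrak{g}_1$ and $N_0$ is the unique nilpotent matrix with $\Exp(N_0)$ equal to the unipotent part $A_u$ of $A$. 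Moreover, after the linear change of coordinates of Section~\ref{sec:index_emb} for which the semisimple part of $A$ is $A_s=\diag(\lambda_1,\dots,\lambda_n)$, the space $\mathfrak{g}_S$ coincides with the explicit space $\mathfrak{g}_\lambda$ of diagonal matrices described by equation~(\ref{equ:form_Lie_alg}).

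Writing an arbitrary element of $\mathfrak{g}_1$ as $M=M_0+W$ with $W=W_s+W_n\in\mathfrak{g}_S\oplus\mathbb{C}N_0$, commutativity of $\mathfrak{g}_1$ gives $\Exp(M)=\Exp(M_0)\Exp(W)=A\,\Exp(W)$, so $\Exp(M)=A$ if and only if $\Exp(W)=\mathrm{Id}$. Now $\Exp(W)=\Exp(W_s)\Exp(W_n)$ is the multiplicative Jordan decomposition of $\Exp(W)$ ($\Exp(W_s)$ semisimple, $\Exp(W_n)$ unipotent, commuting), so uniqueness of that decomposition applied to $\mathrm{Id}=\mathrm{Id}\cdot\mathrm{Id}$ forces $\Exp(W_s)=\mathrm{Id}$ and $\Exp(W_n)=\mathrm{Id}$. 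Since $W_n$ is nilpotent, $W_n=0$; and writing $W_s=\diag(w_1,\dots,w_n)\in\mathfrak{g}_\lambda$, the condition $\Exp(W_s)=\mathrm{Id}$ means $w_j\in 2\pi i\mathbb{Z}$ for every $j$. Hence the set of infinitesimal generators of $F$ is in bijection with $M_0+2\pi i L$, where, by~(\ref{equ:form_Lie_alg}),
\[
L=\{k\in\mathbb{Z}^n : m_1k_1+\dots+m_nk_n=0\ \text{ for all }(m_1,\dots,m_n)\in S_\lambda\}.
\]

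Therefore $F$ has a unique infinitesimal generator if and only if $L=\{0\}$, i.e. if and only if $S_\lambda$ spans $\mathbb{Q}^n$; in every other case $L$ is a nonzero subgroup of $\mathbb{Z}^n$, hence of positive rank and infinite, so $F$ has infinitely many infinitesimal generators. It then remains to identify, under the hypothesis $m(F)=1$, the condition ``$S_\lambda$ spans $\mathbb{Q}^n$'' with ``$F$ is unipotent''. If $F$ is unipotent then $\lambda_1=\dots=\lambda_n=1$ and $S_\lambda=\mathbb{Z}^n$. Conversely, if $S_\lambda$ spans $\mathbb{Q}^n$ then $S_\lambda'=\mathbb{Z}^n$, so each $\lambda_j$ (take $m=e_j$) is a root of unity; but $m(F)=|R_\lambda|=1$ forces $\lambda_j\in R_\lambda=\{1\}$, so $F$ is unipotent.

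I do not expect a genuine obstacle: once Proposition~\ref{pro:car_inf_gen}(ii) and the character computation of Section~\ref{sec:index_emb} are in hand, the argument is bookkeeping. The only points demanding a bit of care are extracting $\Exp(W_s)=\Exp(W_n)=\mathrm{Id}$ from uniqueness of the multiplicative Jordan decomposition (which needs $\mathfrak{g}_1$ abelian, so that $W_s$ and $W_n$ commute), and checking that the diagonalizing change of coordinates is the same one fixed in Section~\ref{sec:index_emb}, so that $\mathfrak{g}_S$ is literally $\mathfrak{g}_\lambda$ and formula~(\ref{equ:form_Lie_alg}) applies verbatim.
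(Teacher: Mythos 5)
Your argument is correct and follows essentially the same route as the paper: both pass through Proposition~\ref{pro:car_inf_gen}(ii), reduce to the semisimple (diagonal) part via the splitting $\mathfrak{g}=\mathfrak{g}_S\oplus\mathfrak{g}_N$, parametrize the generators by $2\pi i$ times the integer lattice orthogonal to $S_\lambda'$, and conclude that uniqueness forces $S_\lambda'=\mathbb{Z}^n$, hence $\lambda_1=\dots=\lambda_n=1$ since $m(F)=1$. The only difference is that you spell out, via the multiplicative Jordan decomposition of $\Exp(W)$, the reduction step that the paper states in one line, which is a harmless (indeed welcome) elaboration.
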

\begin{proof}
The uniqueness of the infinitesimal generator is equivalent to the uniqueness of the infinitesimal
generator of
$\mathrm{diag} (\lambda_1, \hdots, \lambda_n)$ in the group of diagonal matrices where
$\mathrm{spec} (D_0 F) = \{\lambda_1, \hdots, \lambda_n\}$.
Let us recall that the conditions $m(F)=1$,
$\overline{\langle F \rangle} = \overline{\langle F \rangle}_0$ and
$S_\lambda = S_{\lambda}'$ are equivalent (see Section~\ref{sec:index_emb}).
Given an infinitesimal generator $\mathrm{diag} (\mu_1, \hdots, \mu_n)$ of
$\mathrm{diag} (\lambda_1, \hdots, \lambda_n)$, the infinitesimal generators of
$\mathrm{diag} (\lambda_1, \hdots, \lambda_n)$ are the matrices of the form
$\mathrm{diag} (\mu_1 + 2 \pi i k_1, \hdots, \mu_n + 2 \pi i k_n)$ where
$k_1, \hdots, k_n$ are integer numbers such that
$m_1 k_1 + \dots + m_n k_n =0$ for any
$ (m_1, \hdots, m_n) \in S_{\lambda}'$. The system of equations has either a unique solution,
if the rank of $ S_{\lambda}'$ is equal to $n$, or infinitely many solutions otherwise.
Notice that the rank of $S_{\lambda}'$ is equal to $n$ if and only if
$ S_{\lambda}'={\mathbb Z}^{n}$ and this condition is equivalent to
$\lambda_1=\dots=\lambda_n=1$ since $S_{\lambda}=S_{\lambda}'$.
As a consequence $F$ has a unique infinitesimal
generator if $F$ is unipotent and infinitely many infinitesimal generators otherwise.
\end{proof}
\subsection{Geometrical properties of the infinitesimal generator}

In this section, we prove that if a diffeomorphism $F$ has an infinitesimal generator $X$, then $F$ and $X$ have the same formal analytic invariant sets (ideals). Recall that an ideal $I\subset \hat{\mathcal{O}}_n$ is invariant for a diffeomorphism $F\in\Diff \Cd n$ if $I\circ F\subset I$, and is invariant for a formal vector field $X\in\cvf\Cd n$ if $X(I)\subset I$.

\begin{proposition}
\label{pro:group_pro}
Given an ideal $I\subset\hat{\mathcal{O}}_n$, the set
\[ {\mathcal I}_{I} = \{ F \in \Diffh\Cd n : I\circ F \subset I \}  \]
is a pro-algebraic group.  Moreover, it satisfies
${\mathcal I}_{I} = \{ F \in \Diffh\Cd n : I\circ F = I \}$.
\end{proposition}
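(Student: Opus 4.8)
The plan is to realize ${\mathcal I}_I$ as a projective limit of stabilizers of finite-dimensional subspaces and then invoke Proposition~\ref{pro:criterium}; the group axioms and the equality ${\mathcal I}_I=\{F\in\Diffh\Cd n : I\circ F=I\}$ will drop out of the same finiteness considerations.

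First I would dispose of the trivial case: if $I$ is not contained in the maximal ideal ${\mathfrak m}$, then $I=\hat{\mathcal{O}}_n$ (as $\hat{\mathcal{O}}_n$ is local) and ${\mathcal I}_I=\Diffh\Cd n$, which is pro-algebraic. So assume $I\subset{\mathfrak m}$. For each $k\ge1$ put $I_k=(I+{\mathfrak m}^{k+1})/{\mathfrak m}^{k+1}$, a finite-dimensional subspace of ${\mathfrak m}/{\mathfrak m}^{k+1}$, and recall from \eqref{equ:comp} that each $F\in\Diffh\Cd n$ acts on ${\mathfrak m}/{\mathfrak m}^{k+1}$ through the automorphism $F_k\colon f+{\mathfrak m}^{k+1}\mapsto f\circ F+{\mathfrak m}^{k+1}$, which depends only on $\pi_k(F)\in D_k$. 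The first step is the elementary observation that $F_k(I_k)\subset I_k$ if and only if $h\circ F\in I+{\mathfrak m}^{k+1}$ for every $h\in I$. Since $\hat{\mathcal{O}}_n$ is a Noetherian local ring, every ideal is closed in the ${\mathfrak m}$-adic topology, so $\bigcap_{k\ge1}(I+{\mathfrak m}^{k+1})=I$; hence $F_k(I_k)\subset I_k$ for all $k$ if and only if $I\circ F\subset I$. Setting $H_k=\{L\in D_k : L(I_k)\subset I_k\}$, this means precisely ${\mathcal I}_I=\{F\in\Diffh\Cd n : \pi_k(F)\in H_k\ \forall k\ge1\}=\varprojlim H_k$.

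The next step is to verify the hypotheses of Proposition~\ref{pro:criterium}. That each $H_k$ is an algebraic subgroup of $D_k$ is routine: in a basis of ${\mathfrak m}/{\mathfrak m}^{k+1}$ adapted to $I_k$ the condition $L(I_k)\subset I_k$ is the vanishing of a fixed block of matrix entries, so $H_k$ is Zariski-closed in $D_k$; it is obviously closed under composition and contains the identity, and it is closed under inversion because $L|_{I_k}$ is an injective endomorphism of a finite-dimensional space, hence bijective, so $L^{-1}(I_k)=I_k$. For the compatibility $\pi_{k,l}(H_k)\subset H_l$ with $k\ge l\ge1$, one uses that the natural projection $p_{k,l}\colon{\mathfrak m}/{\mathfrak m}^{k+1}\to{\mathfrak m}/{\mathfrak m}^{l+1}$ intertwines the two actions, $p_{k,l}\circ F_k=F_l\circ p_{k,l}$, and satisfies $p_{k,l}(I_k)=I_l$; thus if $L(I_k)\subset I_k$ then $\pi_{k,l}(L)(I_l)=\pi_{k,l}(L)(p_{k,l}(I_k))=p_{k,l}(L(I_k))\subset p_{k,l}(I_k)=I_l$. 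Proposition~\ref{pro:criterium} now gives that ${\mathcal I}_I=\varprojlim H_k$ is pro-algebraic; in particular it is a subgroup of $\Diffh\Cd n$.

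Finally, for the ``moreover'' part: since ${\mathcal I}_I$ is a group, for $F\in{\mathcal I}_I$ we also have $F^{-1}\in{\mathcal I}_I$, so $I\circ F^{-1}\subset I$, whence $I=(I\circ F^{-1})\circ F\subset I\circ F\subset I$ and therefore $I\circ F=I$; the reverse inclusion $\{F : I\circ F=I\}\subset{\mathcal I}_I$ is obvious. The one point that needs care---though I do not expect it to be a genuine obstacle---is the reduction of the infinite-dimensional invariance condition to the finite truncations $H_k$: here one leans on $\hat{\mathcal{O}}_n$ being Noetherian (so that ideals are ${\mathfrak m}$-adically closed, which both yields the description ${\mathcal I}_I=\varprojlim H_k$ and, via finite-dimensionality of $I_k$, upgrades each inclusion to an equality) together with the compatibility of the truncation maps $\pi_{k,l}$ with the subspaces $I_k$.
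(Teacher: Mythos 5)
Your proof is correct and follows essentially the same route as the paper: realize ${\mathcal I}_I$ as $\varprojlim H_k$ where $H_k$ is the stabilizer in $D_k$ of the finite-dimensional truncation $I_k$, check that each $H_k$ is an algebraic subgroup (using invertibility on the finite-dimensional space to upgrade $L(I_k)\subset I_k$ to equality) and that the truncation maps are compatible, apply Proposition~\ref{pro:criterium}, and use Krull's intersection theorem to pass from the truncated conditions back to $I\circ F\subset I$ and $I\circ F=I$. The only cosmetic difference is that the paper phrases the algebraicity of $H_k$ via a finite set of generators of $I$ and polynomial equations on the coefficients, while you argue directly with the subspace $I_k$; the substance is the same.
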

\begin{proof}
In order to show that  ${\mathcal I}_{I}$ is pro-algebraic, we will use Proposition
\ref{pro:criterium}. The ideal $I$  is of the form  $I=(f_1, \hdots, f_m)$ since
${\mathbb C}[[x_1,\hdots,x_n]]$ is noetherian.  We define
\[ H_k = \{ F \in D_k : f_j \circ F \in I + {\mathfrak m}^{k+1} \ \forall 1 \leq j \leq m \}.  \]
It is clear that $\pi_{k,l}(H_k) \subset H_l$ for any $k \geq l \geq 1$.
The inclusion ${\mathcal I}_{I} \subset  \varprojlim H_{k}$ is obvious.
Any element $F$ of $D_k$ can be interpreted as the $k$-th jet of a local
diffeomorphism. Hence,
the power series $f_j \circ F$ is of the form
$\sum_{i_1, \hdots, i_n} a_{i_1, \hdots, i_n}^{j} x_1^{i_1} \hdots x_n^{i_n}$
where every $a_{i_1, \hdots, i_n}^{j}$ is a polynomial  in the coefficients of the Taylor
series expansion of $F$ at the origin. The condition
$f_j \circ F \in I + {\mathfrak m}^{k+1}$ is satisfied if and only if
$\sum_{i_1+ \hdots + i_n \leq k } a_{i_1, \hdots, i_n}^{j} x_1^{i_1} \hdots x_n^{i_n}$
belongs to the complex vector space $V_k$ generated by the polynomials of the form
\[ J_k (x_1^{i_1} \dots x_n^{i_n} f_j) \  \mathrm{where} \ i_1+ \dots + i_n \leq k \ \mathrm{and} \
1 \leq j \leq m .\]
The property $J_k (f_j \circ F) \in V_k$ is equivalent to a linear system of equations on the
coefficients  $a_{i_1, \hdots, i_n}^{j}$ with $i_1+ \dots + i_n \leq k$ by elementary linear algebra.
Therefore the condition  $J_k (f_j \circ F) \in V_k$ is equivalent to a system of polynomial
equations in the coefficients of $F$.
Hence $H_k$ is an algebraic subset of $D_k$.

We denote by $I_k$ the natural projection of $I$ in
${\mathfrak m}/{\mathfrak m}^{k+1}$. The definition of $H_k$ implies
\[ H_k = \{ F \in D_k : F( I_k) \subset I_k \}, \]
(see equation (\ref{equ:comp})).
Since $F$ defines an element of  $\mathrm{GL}({\mathfrak m}/{\mathfrak m}^{k+1})$ and
${\mathfrak m}/{\mathfrak m}^{k+1}$ is finite dimensional, it follows that
\[ H_k = \{ F \in D_k :F( I_k) = I_k \} . \]
As a consequence $H_k$ is a subgroup of $D_k$ and hence $H_k$ is an algebraic subgroup of
$D_k$ for any $k \geq 1$.

Finally, let us show ${\mathcal I}_{I} = \varprojlim H_{k}$.
By definition, we have
\[ \varprojlim H_{k} = \{ F \in \Diffh\Cd n : I \circ F + {\mathfrak m}^{k+1} =
I + {\mathfrak m}^{k+1} \ \forall k \geq 1 \} . \]
Since $\cap_{k=1}^{\infty} (J + {\mathfrak m}^{k+1}) = J$ for any proper ideal $J$ of
${\mathbb C}[[x_1,\hdots,x_n]]$ by Krull's intersection theorem, we deduce
$ \varprojlim H_{k} =  \{ F \in \Diffh\Cd n : I \circ F  = I  \}$ and hence
$ \varprojlim H_{k}  \subset {\mathcal I}_{I}$.
Since $ {\mathcal I}_{I} \subset \varprojlim H_{k} $, we obtain $ \varprojlim H_{k}  = {\mathcal I}_{I}$.
Therefore ${\mathcal I}_{I}$
is a pro-algebraic group by Proposition \ref{pro:criterium}.
\end{proof}
\begin{proposition}
Given a formal curve $\G$, the group
$${\mathcal I}_{\Gamma}'=\{F\in \mathcal{I}_{\Gamma}: (F|_\G)'(0)=1\}$$
is pro-algebraic.
\end{proposition}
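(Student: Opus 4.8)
The plan is to realize $\mathcal I'_\G$ as a finite index subgroup of a pro-algebraic group and then invoke Proposition~\ref{pro:axu}(ii). Besides the inner multiplier $\Phi\colon\mathcal I_\G\to\C^*$, $\Phi(F)=(F|_\G)'(0)=\lambda_\G$, whose kernel is precisely $\mathcal I'_\G$, I would bring in the tangent multiplier $\chi\colon\mathcal I_\G\to\C^*$, $\chi(F)=\lambda(\G)$, the eigenvalue of $D_0F$ along the tangent direction of $\G$. Both are group homomorphisms: for $\Phi$ because, after fixing a parametrization of $\G$, $F\mapsto F|_\G$ is a homomorphism into $\Diffh(\C,0)$ and the multiplier does not depend on the parametrization; for $\chi$ because the tangent line $L$ of $\G$ at $0$ is $D_0F$-invariant for every $F\in\mathcal I_\G$, so the eigenvalue on $L$ is multiplicative by the chain rule $D_0(F\circ F')=D_0F\circ D_0F'$. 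By \eqref{equ:inner_tangent} they satisfy $\Phi(F)^\nu=\chi(F)$ for all $F$, where $\nu$ is the (fixed) multiplicity of $\G$. Note that $\mathcal I_\G$ is pro-algebraic by Proposition~\ref{pro:group_pro} applied to the ideal $I=\G$.

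The main step is to show that $\chi$ factors through $\pi_1$ as a morphism of algebraic groups on $\overline{\pi_1(\mathcal I_\G)}$. Since $\G$ is invariant for each $F\in\mathcal I_\G$, its first iterated tangent is an infinitely near fixed point of $F$ by the characterization recalled in Section~\ref{sec:blow-ups}, so $L$ is $D_0F$-invariant; hence $\chi(F)$ depends only on $D_0F$, i.e. only on $\pi_1(F)$. Under the identification of $D_1$ with $\mathrm{GL}(n,\C)$, the set $P$ of jets whose linear part preserves $L$ is a Zariski-closed subgroup, $\pi_1(\mathcal I_\G)\subset P$ hence $\overline{\pi_1(\mathcal I_\G)}\subset P$, and ``the eigenvalue on $L$'' is a regular character $P\to\C^*$; its restriction to $\overline{\pi_1(\mathcal I_\G)}$ is the required morphism $\bar\chi$, with $\chi=\bar\chi\circ\pi_1$.

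Next I would deduce that $\ker\chi$ is pro-algebraic. Writing $\mathcal I_\G=\varprojlim G_k$ with $G_k=\overline{\pi_k(\mathcal I_\G)}$ (Definition~\ref{def:Zariski-closure}; this uses that $\mathcal I_\G$ is pro-algebraic), set $H_k=G_k\cap\pi_{k,1}^{-1}(\ker\bar\chi)$ for $k\ge1$. Each $\pi_{k,1}\colon G_k\to G_1$ is a morphism of algebraic groups, so $H_k$ is an algebraic subgroup of $D_k$, and clearly $\pi_{k,l}(H_k)\subset H_l$ for $k\ge l$. By Proposition~\ref{pro:criterium}, $\varprojlim H_k$ is pro-algebraic; and a direct verification gives $\varprojlim H_k=\ker\chi$ (the inclusion $\supset$ is immediate, and if $F\in\varprojlim H_k$ then $F\in\varprojlim G_k=\mathcal I_\G$ and $\pi_1(F)\in\ker\bar\chi$, so $\chi(F)=\bar\chi(\pi_1(F))=1$).

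Finally, $\Phi(F)^\nu=\chi(F)$ gives $\ker\Phi\subset\ker\chi$, and for $F\in\ker\chi$ the identity $\Phi(F)^\nu=1$ shows that $\Phi$ maps $\ker\chi$ into the finite group $\mu_\nu$ of $\nu$-th roots of unity, with kernel exactly $\mathcal I'_\G$. Therefore $\mathcal I'_\G$ has index dividing $\nu$ in the pro-algebraic group $\ker\chi$ and is pro-algebraic by Proposition~\ref{pro:axu}(ii). The point requiring genuine care is the step establishing that $\chi$ descends to a regular character on $\overline{\pi_1(\mathcal I_\G)}$ — i.e. that the ``invariant tangent direction'' condition defines a Zariski-closed subgroup of $D_1$ containing $\overline{\pi_1(\mathcal I_\G)}$ on which $\chi$ is regular — but this is a routine translation of the blow-up description of invariance into linear algebra; the rest is formal manipulation with the projective systems already in place.
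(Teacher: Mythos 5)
Your proof is correct and follows essentially the same route as the paper: the paper's auxiliary group $\hat{\mathcal I}_\Gamma=\mathcal I_\Gamma\cap\mathcal J$ (where $\mathcal J$ is the set of diffeomorphisms whose linear part fixes the tangent vector of $\Gamma$) coincides with your $\ker\chi$, and the concluding step — the inner multiplier restricted to this group takes values in the $\nu$-th roots of unity because $(\lambda_\Gamma)^\nu=\lambda(\Gamma)$, so $\mathcal I_\Gamma'$ has finite index there and Proposition~\ref{pro:axu}(ii) applies — is identical. Your explicit projective system $H_k=G_k\cap\pi_{k,1}^{-1}(\ker\bar\chi)$ merely replaces the paper's shortcut of observing that $\mathcal J$ is clearly pro-algebraic and that intersections of pro-algebraic groups are pro-algebraic.
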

\begin{proof}
Let $\gamma(t) = (\gamma_1(t), \hdots, \gamma_n(t))$ be an irreducible parametrization of $\Gamma$.
We denote $J_{\nu} \gamma(t) = (\mu_1 t^{\nu}, \hdots, \mu_n t^{\nu})$ where
$\nu$ is the multiplicity of $\G$.
We define the auxiliary group
\[ {\mathcal J} = \{ F \in \Diffh\Cd n:
(\mu_1, \hdots, \mu_n) \in \ker (D_0 F - \mathrm{Id}) \}  \]
that is clearly pro-algebraic.
It is straightforward to check out the inclusion
${\mathcal I}_{\Gamma}' \subset {\mathcal J}$.
Since the intersection of pro-algebraic groups is pro-algebraic
\cite[Remark 2.7]{JR:solvable25}, we obtain that
$\hat{\mathcal I}_{\Gamma} = {\mathcal I}_{\Gamma} \cap {\mathcal J}$ is a pro-algebraic
group containing ${\mathcal I}_{\Gamma}'$.
In order to show that ${\mathcal I}_{\Gamma}' $ is pro-algebraic, consider the morphism of groups
\[ \begin{array}{ccccc}
A & : &  \hat{\mathcal I}_{\Gamma} & \to & {\mathbb C}^{*} \\
& & F & \mapsto &  (F|_{\Gamma})' (0) .
\end{array} \]
Notice that the tangent value $\lambda(\Gamma)$ is equal to 1 for any element of $\hat{\mathcal I}_\Gamma$. Using the relation between the
tangent and the inner eigenvalues given in equation
(\ref{equ:inner_tangent}), we deduce that
the image of $A$ is contained in the group of roots of  unity of order $\nu$.
Therefore  ${\mathcal I}_{\Gamma}' $ is a finite index subgroup of
$\hat{\mathcal I}_{\Gamma}$ and hence pro-algebraic by
Proposition \ref{pro:axu}.
\end{proof}

The next results are consequences  
of the above proposition and the general properties of
pro-algebraic groups.
\begin{proposition}
\label{pro:remres}
Let $F \in \Diffh\Cd n$ and let $I\subset\hat{\mathcal{O}}_n$ be an ideal. Suppose that there exists
$r \in {\mathbb Z}^{*}$ such that $I$ is invariant for $F^{r}$.  Then
$\overline{\langle F \rangle}_0 \subset {\mathcal I}_{I}$. Moreover, if $I=\G$ is a formal curve and $(F^{r}|_{\Gamma})' (0)$
is a root of unity then $\overline{\langle F \rangle}_0 \subset {\mathcal I}_{\Gamma}'$.
\end{proposition}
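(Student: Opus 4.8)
The plan is to reduce the statement to two facts already established: the pro-algebraicity of $\mathcal{I}_I$ (and of $\mathcal{I}_\G'$), and the identity $\overline{\langle F\rangle}_0=\overline{\langle F^s\rangle}_0$ valid for every $s\in\Z^*$ from Remark~\ref{rem:cc_iter}.

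For the first assertion, note that the hypothesis ``$I$ is invariant for $F^r$'' means precisely $F^r\in\mathcal{I}_I$. Since $\mathcal{I}_I$ is a group, $\langle F^r\rangle\subset\mathcal{I}_I$, and since $\mathcal{I}_I$ is pro-algebraic by Proposition~\ref{pro:group_pro} (hence Zariski-closed, so that pro-algebraic closures preserve inclusion into it), we obtain $\overline{\langle F^r\rangle}\subset\mathcal{I}_I$. Combining this with $\overline{\langle F\rangle}_0=\overline{\langle F^r\rangle}_0\subset\overline{\langle F^r\rangle}$ from Remark~\ref{rem:cc_iter} yields $\overline{\langle F\rangle}_0\subset\mathcal{I}_I$, as desired.

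For the second assertion, let $d\ge1$ be the order of the root of unity $\mu=(F^r|_\G)'(0)$. Since $\G$ is invariant for $F^r$ it is also invariant for $F^{rd}$, and because the inner eigenvalue is invariant under formal conjugacy and multiplicative under composition of the restricted one-dimensional diffeomorphisms, we have $(F^{rd}|_\G)'(0)=\mu^d=1$; hence $F^{rd}\in\mathcal{I}_\G'$. As $\mathcal{I}_\G'$ is pro-algebraic by the previous proposition, the same closure argument gives $\overline{\langle F^{rd}\rangle}\subset\mathcal{I}_\G'$, and using once more $\overline{\langle F\rangle}_0=\overline{\langle F^{rd}\rangle}_0\subset\overline{\langle F^{rd}\rangle}$ we conclude $\overline{\langle F\rangle}_0\subset\mathcal{I}_\G'$.

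I do not expect a genuine obstacle here: the proposition is a formal consequence of the pro-algebraicity results for $\mathcal{I}_I$ and $\mathcal{I}_\G'$ together with Remark~\ref{rem:cc_iter}. The only point that deserves a word of care is that $F^k|_\G$ is defined only up to formal conjugacy in $\widehat{\Diff}(\C,0)$, so the computation $(F^{rd}|_\G)'(0)=((F^r|_\G)'(0))^d$ must be read at the level of the well-defined multiplier (the inner eigenvalue of $(F,\G)$), which is precisely what legitimizes passing to the iterate $F^{rd}$ in order to land inside $\mathcal{I}_\G'$.
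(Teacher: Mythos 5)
Your argument is correct and is essentially the paper's own proof: both parts rest on the pro-algebraicity of $\mathcal{I}_I$ (resp. $\mathcal{I}_\Gamma'$) to get $\overline{\langle F^r\rangle}\subset\mathcal{I}_I$, combined with the identity $\overline{\langle F\rangle}_0=\overline{\langle F^r\rangle}_0$ from Remark~\ref{rem:cc_iter}, and the paper likewise handles the second assertion by replacing $r$ with a multiple so that the inner eigenvalue becomes $1$. Your explicit remark on the multiplicativity of the inner eigenvalue under iteration just spells out what the paper leaves implicit.
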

\begin{proof}
Since ${\mathcal I}_{I}$ is pro-algebraic, we obtain
$\overline{\langle F^{r} \rangle} \subset {\mathcal I}_{I}$. Since $\overline{\langle F \rangle}_0 =
\overline{\langle F^r \rangle}_0$ by Remark \ref{rem:cc_iter}, we get
$\overline{\langle F \rangle}_0 \subset {\mathcal I}_{I}$.
If $I=\G$ is a formal curve and $(F^{r}|_{\Gamma})' (0)$  is a root of unity then we can replace $r$ with a multiple to obtain
$(F^{r}|_{\Gamma})' (0) =1$. The same proof shows
$\overline{\langle F \rangle}_0 \subset {\mathcal I}_{\Gamma}'$.
\end{proof}
\begin{proposition}
\label{pro:perbinv}
Let $F \in \Diffh\Cd n$ and let $m$ be the index of embeddability of $F$. Let $X$ be an infinitesimal generator of $F^{m}$. Given an ideal $I$ of $\hat{\mathcal{O}}_n$, the following properties are equivalent:
\begin{enumerate}
	\item $I$ is invariant for $X$;
	\item $I$ is invariant for $F^m$;
	\item $I$ is invariant for a non-trivial iterate of $F$.
\end{enumerate}
\end{proposition}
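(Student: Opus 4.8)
The plan is to prove the chain of equivalences $(1)\Leftrightarrow(2)\Leftrightarrow(3)$ by exploiting the pro-algebraic machinery already developed. The implication $(2)\Rightarrow(3)$ is trivial since $F^m$ is itself a non-trivial iterate of $F$. For $(3)\Rightarrow(2)$, suppose $I$ is invariant for $F^r$ with $r\in\Z^*$; then by Proposition~\ref{pro:remres} we have $\overline{\langle F\rangle}_0\subset\mathcal{I}_I$. Since $X$ is an infinitesimal generator of $F^m$, it lies in the Lie algebra $\mathfrak{g}$ of $\overline{\langle F^m\rangle}=\overline{\langle F\rangle}$, and by Proposition~\ref{pro:axu}(iii) we get $\Exp(tX)\in\overline{\langle F\rangle}_0\subset\mathcal{I}_I$ for all $t\in\C$. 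In particular $F^m=\Exp(X)\in\mathcal{I}_I$, which means $I\circ F^m=I$ by the last assertion of Proposition~\ref{pro:group_pro}; so $I$ is invariant for $F^m$.

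For the equivalence $(1)\Leftrightarrow(2)$, the key is to transfer the fact ``$\Exp(tX)$ preserves $I$ for all $t$'' into an infinitesimal statement about $X$, and conversely. One direction: if $I$ is invariant for $X$, i.e. $X(I)\subset I$, then for any $f\in I$ and any $t\in\C$ the series $f\circ\Exp(tX)=\sum_{j\ge0}t^jX^j(f)/j!$ has all its terms in $I$ (since $X^j(f)\in I$ for every $j$), so $f\circ\Exp(tX)\in I+\mathfrak{m}^{k+1}$ for every $k$, hence $f\circ\Exp(tX)\in I$ by Krull's intersection theorem; thus $\Exp(tX)\in\mathcal{I}_I$, in particular $F^m=\Exp(X)$ preserves $I$, giving $(1)\Rightarrow(2)$. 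For the converse $(2)\Rightarrow(1)$: if $I$ is invariant for $F^m$, then $\overline{\langle F^m\rangle}\subset\mathcal{I}_I$ because $\mathcal{I}_I$ is pro-algebraic (Proposition~\ref{pro:group_pro}), and hence $X\in\mathfrak{g}$ satisfies $\Exp(tX)\in\mathcal{I}_I$ for all $t$. Differentiating the relation $f\circ\Exp(tX)\in I$ at $t=0$ for $f\in I$ yields $X(f)\in I$ (one needs that $I$, being finitely generated and $\mathfrak{m}$-adically closed, makes the differentiation legitimate term-by-term on the formal flow), so $X(I)\subset I$, i.e. $I$ is invariant for $X$.

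I expect the main technical obstacle to be the differentiation step in $(2)\Rightarrow(1)$: justifying that from $f\circ\Exp(tX)\in I$ for all $t\in\C$ (a statement in $\hat{\mathcal{O}}_n$ depending on a complex parameter $t$) one may extract the $t$-derivative at $0$, which equals $X(f)$, and conclude it lies in $I$. The cleanest way to avoid analytic subtleties is to argue at each finite jet level: $\Exp(tX)\in\mathcal{I}_I$ means $\pi_k(\Exp(tX))\in H_k$ for every $k$, where $H_k=\{L\in D_k: L(I_k)\subset I_k\}$ is the algebraic subgroup from the proof of Proposition~\ref{pro:group_pro}. The one-parameter subgroup $t\mapsto\pi_k(\Exp(tX))$ of $D_k$ lies in $H_k$, so its generator $d\pi_k(X)$ lies in the Lie algebra $\mathfrak{h}_k$ of $H_k$; but $\mathfrak{h}_k$ is exactly the set of derivations of $\mathfrak{m}/\mathfrak{m}^{k+1}$ preserving $I_k=(I+\mathfrak{m}^{k+1})/\mathfrak{m}^{k+1}$. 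Hence $X(I)\subset I+\mathfrak{m}^{k+1}$ for every $k$, and Krull's intersection theorem gives $X(I)\subset I$. This finite-dimensional reduction sidesteps any convergence issue and mirrors the style of the surrounding arguments.

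Putting these together, the proof consists of: (i) $(2)\Rightarrow(3)$ trivial; (ii) $(3)\Rightarrow(2)$ via Propositions~\ref{pro:remres}, \ref{pro:axu}(iii) and the ``$\mathcal{I}_I$ is a group'' part of Proposition~\ref{pro:group_pro}; (iii) $(1)\Rightarrow(2)$ by expanding the formal flow and applying Krull; (iv) $(2)\Rightarrow(1)$ by the Lie-algebra-of-$H_k$ argument above together with Krull. Since $X\in\mathfrak{g}$ is guaranteed by the hypothesis that $X$ is an infinitesimal generator of $F^m$ (Definition~\ref{def:infgen}), every invocation of Proposition~\ref{pro:axu}(iii) is legitimate.
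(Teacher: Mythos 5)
Your proof is correct and follows essentially the paper's route: it uses the same machinery (pro-algebraicity of ${\mathcal I}_{I}$ from Proposition~\ref{pro:group_pro}, Propositions~\ref{pro:axu} and~\ref{pro:remres}) to get $\Exp(tX)\in{\mathcal I}_{I}$ for all $t$, and your explicit jet-level argument merely spells out the step the paper leaves implicit in ``Thus $I$ is invariant for $X$'', while organizing the implications as $(1)\Leftrightarrow(2)$ and $(2)\Leftrightarrow(3)$ instead of the cycle $(1)\Rightarrow(2)\Rightarrow(3)\Rightarrow(1)$. One small slip: the equality $\overline{\langle F^m\rangle}=\overline{\langle F\rangle}$ is false whenever $m>1$ (since then $F\notin\overline{\langle F^m\rangle}\subset\overline{\langle F\rangle}_0$); you only need $\overline{\langle F^m\rangle}_0=\overline{\langle F\rangle}_0$ (Remark~\ref{rem:cc_iter}), or just the inclusion $\overline{\langle F^m\rangle}\subset\overline{\langle F\rangle}$, so nothing in your argument actually breaks.
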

\begin{proof}
The implications $(1)\Rightarrow(2)$ and $(2)\Rightarrow(3)$ are clear. Assume that (3) holds. We have  $\Exp(t X) \in  \overline{\langle F \rangle}_0$ for any $t \in {\mathbb C}$
by Proposition \ref{pro:axu}.
Since
$\overline{\langle F \rangle}_0 \subset {\mathcal I}_{I}$ by Proposition \ref{pro:remres},
we obtain $\Exp(t X) \in {\mathcal I}_{I}$ for any $t \in {\mathbb C}$.
Thus $I$ is invariant for $X$.
\end{proof}

As a consequence of Proposition~\ref{pro:perbinv}, we recover the following result of Rib\'{o}n:
\begin{cor}[\cite{Rib}]
\label{cor:per_curve}
Let $F \in \diff{}{2}$ and let $m=m(F)$ be its index of embeddability. Then there exists a formal $m$-periodic curve of $F$.
\end{cor}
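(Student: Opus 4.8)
The plan is to realise an iterate of $F$ as the time-one flow of a formal vector field and then transfer an invariant curve of that vector field back to the iterate, using the geometric dictionary established in this section. First, regarding $F\in\diff{}{2}$ as an element of $\Diffh\Cd 2$ and letting $m=m(F)$ be its index of embeddability, I invoke Theorem~\ref{theorem:infinitesimal_generator} to obtain a formal vector field $X\in\cvf\Cd 2$ which is an \emph{infinitesimal generator} of $F^{m}$ in the sense of Definition~\ref{def:infgen}; in particular $X$ is singular and $\Exp(X)=F^{m}$. The reason for insisting on a genuine infinitesimal generator, rather than merely some $X$ with $\Exp(X)=F^m$, is that this is exactly the hypothesis under which Proposition~\ref{pro:perbinv} exchanges invariance of ideals between $X$ and $F^{m}$.

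The core step is to produce a formal invariant curve $\G$ of $X$, and it is here that the hypothesis $n=2$ is essential. If $X=0$ then $F^{m}=\Id$ and every formal curve is trivially $m$-periodic. If $X\neq0$ has a non-isolated singularity, then $\Sing(X)=(a_1,a_2)$ is contained in a proper nonzero principal ideal $(d)$, and for any irreducible factor $d_i$ of $d$ the prime ideal $\G=(d_i)$ satisfies $X(\G)\subset\Sing(X)\subset\G$, so $\G$ is a formal invariant curve of $X$. Finally, if $X$ has an isolated singularity, then $X$ admits a formal separatrix: this is the formal version of the Camacho--Sad separatrix theorem, which one gets from Seidenberg's reduction of singularities — valid in the formal category because point blow-ups are algebraic operations — by reducing after finitely many blow-ups to simple singularities, at each of which there is an invariant formal branch, and then blowing such a branch down to obtain a formal invariant curve $\G$ of $X$ at the origin. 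I expect this isolated-singularity case to be the only delicate point of the argument, although it is entirely classical; the actual content of the corollary is that, granted it, the statement falls out immediately from the earlier results.

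To conclude, with $\G$ an $X$-invariant formal curve and $X$ an infinitesimal generator of $F^{m}$, Proposition~\ref{pro:perbinv} applied to the ideal $I=\G$ shows that $\G$ is invariant for $F^{m}$; that is, $\G$ is a formal $m$-periodic curve of $F$, as required. The main obstacle is thus exactly the planar separatrix theorem invoked in the second step: it is no coincidence that this has no counterpart in higher dimension, since by the example of G\'omez-Mont and Luengo \cite{Gom-L} quoted in the introduction there are vector fields — and hence biholomorphisms — in dimension $n\ge3$ with no formal invariant curve of any iterate, so that the statement genuinely fails for $n\ge3$.
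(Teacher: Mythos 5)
Your proposal is correct and follows essentially the same route as the paper: produce an infinitesimal generator $X$ of $F^{m}$ via Theorem~\ref{theorem:infinitesimal_generator}, obtain a formal invariant curve of $X$ from the formal version of the Camacho--Sad separatrix theorem (the non-isolated case you treat by hand is also covered by it), and transfer invariance to $F^{m}$ by Proposition~\ref{pro:perbinv}. The only caveat is that your one-line sketch of the separatrix theorem (reduce to simple singularities and blow down an invariant branch) omits its real content, the Camacho--Sad index argument ensuring a branch not contained in the exceptional divisor, but since you invoke it as a classical result --- exactly as the paper does by citing \cite{Cam-S} --- this does not affect the correctness of your argument.
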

\begin{proof}
The diffeomorphism $F^{m}$ has an infinitesimal generator $X$. The formal version of Camacho-Sad's
theorem \cite{Cam-S} provides a formal invariant curve $\Gamma$ that is invariant by
$X$. Thus $\Gamma$ is invariant by $F^{m}$.
\end{proof}

\begin{proposition}
\label{pro:remres2}
Let $F \in \Diffh\Cd n$ and let $\Gamma$ be a formal curve. Suppose that there exists
$r \in {\mathbb Z}^{*}$ such that $\Gamma$ is invariant for $F^{r}$ and $(F^{r}|_{\Gamma})' (0)$ is a root of
unity. Let $X$ be an infinitesimal generator of $F^{m}$, where $m$ is the index of embeddability of $F$. Then the inner eigenvalue of $(X,\G)$ is equal to $0$. In particular
the tangent line of $\Gamma$ is contained in the kernel of the linear part $D_0X$ of $X$ at the origin.
\end{proposition}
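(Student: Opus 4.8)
The plan is to use that the infinitesimal generator $X$ produces, through its time-$t$ flow, a one-parameter family of formal diffeomorphisms contained in the connected component $\overline{\langle F \rangle}_0$, and to read off $\lambda_\G$ by restricting this family to $\G$. First, since $\G$ is invariant for $F^r$, Proposition~\ref{pro:perbinv} applied to $I=\G$ guarantees that $\G$ is invariant for $X$, so that $X|_\G$ and the inner eigenvalue $\lambda_\G$ of $(X,\G)$ are well defined. Being an infinitesimal generator of $F^m$, the field $X$ lies in the Lie algebra of $\overline{\langle F^m \rangle}$, hence $\Exp(tX)\in\overline{\langle F^m \rangle}_0$ for every $t\in\C$ by Proposition~\ref{pro:axu}(iii), and $\overline{\langle F^m \rangle}_0=\overline{\langle F \rangle}_0$ by Remark~\ref{rem:cc_iter}. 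On the other hand, the hypothesis that $\G$ is invariant for $F^r$ with $(F^r|_\G)'(0)$ a root of unity gives $\overline{\langle F \rangle}_0\subset\mathcal{I}_\G'$ by Proposition~\ref{pro:remres}. Therefore $\Exp(tX)\in\mathcal{I}_\G'$ for every $t\in\C$; in particular $(\Exp(tX)|_\G)'(0)=1$ for all $t$.

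The heart of the argument is then the identity $\Exp(tX)|_\G=\Exp(t\,X|_\G)$ in $\Diffh(\C,0)$, which says that restricting the flow of $X$ to the invariant curve $\G$ is the same as flowing the restricted one-dimensional vector field $X|_\G$. To establish it I would fix an irreducible parametrization $\g(s)$ of $\G$, with $X|_\G=h_\g(s)\,\partial/\partial s$, and use the invariance relation $a_j(\g(s))=h_\g(s)\g_j'(s)$ to check that $X(g)\circ\g=h_\g\cdot(g\circ\g)'=(X|_\G)(g\circ\g)$ for every $g\in\hat{\mathcal{O}}_n$; iterating, $X^j(g)\circ\g=(X|_\G)^j(g\circ\g)$ for all $j\ge0$. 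Substituting this into the formal flow formula $g\circ\Exp(tX)=\sum_{j\ge0}t^jX^j(g)/j!$ and composing with $\g$ yields $g\circ\Exp(tX)\circ\g=(g\circ\g)\circ\Exp(t\,X|_\G)$ for all $g$; since an irreducible parametrization is generically one-to-one, this forces $\Exp(tX)\circ\g=\g\circ\Exp(t\,X|_\G)$, i.e. $\Exp(tX)|_\G=\Exp(t\,X|_\G)$. Because $X|_\G=h_\g(s)\,\partial/\partial s$ has linear part $\lambda_\G s\,\partial/\partial s$, the standard one-variable identity $D_0\Exp(tV)=\Exp(t\,D_0V)$ gives $(\Exp(t\,X|_\G))'(0)=e^{t\lambda_\G}$.

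Combining the two previous observations, $e^{t\lambda_\G}=1$ for every $t\in\C$, whence $\lambda_\G=0$ (for instance by differentiating at $t=0$). For the final assertion, the relation $\nu\lambda_\G=\lambda(\G)$ between the inner and tangent eigenvalues of $(X,\G)$ gives $\lambda(\G)=0$; and since $\lambda(\G)$ is by definition the eigenvalue of $D_0X$ associated with the tangent direction of $\G$, the tangent line of $\G$ is contained in $\ker D_0X$. The only genuinely non-formal step is the identity in the second paragraph: one must handle the formal flow series carefully and invoke that the irreducible parametrization $\g$ is generically injective, so that the equalities of $g\circ\Exp(tX)\circ\g$ for all $g\in\hat{\mathcal{O}}_n$ really do pin down $\Exp(tX)\circ\g$. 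Everything else is a direct assembly of the pro-algebraic machinery already developed in this section.
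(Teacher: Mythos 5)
Your argument is correct, and its first half coincides with the paper's proof: both use Proposition~\ref{pro:remres} (with Proposition~\ref{pro:axu} and Remark~\ref{rem:cc_iter} behind it) to place $\Exp(tX)$ in ${\mathcal I}_{\Gamma}'$ for every $t\in\C$, so that the one-parameter family of restricted diffeomorphisms is tangent to the identity. The proofs diverge only in how $\lambda_\G=0$ is extracted from this: the paper differentiates the family in $t$, writing $\Exp(tX)(\g(s))=\g\circ\phi_t(s)$ with $\phi_t'(0)=1$ and using $X(\g(s))=\frac{\partial}{\partial t}\bigl(\g\circ\phi_t(s)\bigr)\big|_{t=0}$ to conclude that $X|_\G$ has multiplicity at least $2$, whereas you go in the opposite direction, proving the restriction identity $\Exp(tX)|_\G=\Exp(t\,X|_\G)$ from $X^j(g)\circ\g=(X|_\G)^j(g\circ\g)$ and concluding $e^{t\lambda_\G}=1$ for all $t$. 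Your route is slightly longer but yields a reusable fact (restriction to an invariant curve commutes with taking flows), and it is sound provided the termwise manipulation of the flow series is justified coefficientwise, as you note; also, the appeal to generic injectivity of $\g$ at the end of your second paragraph is superfluous, since evaluating $g\circ\Exp(tX)\circ\g=(g\circ\g)\circ\Exp(t\,X|_\G)$ at the coordinate functions $g=x_i$ already gives $\Exp(tX)\circ\g=\g\circ\Exp(t\,X|_\G)$ componentwise. Your closing step, passing from $\lambda_\G=0$ to the tangent line lying in $\ker D_0X$ via the relation $\nu\lambda_\G=\lambda(\G)$ recalled in Section~\ref{sec:blow-ups}, is the same content as the paper's final order comparison, and your use of Proposition~\ref{pro:perbinv} to secure invariance of $\G$ by $X$ (which the paper obtains as a byproduct of the differentiation) is a legitimate alternative.
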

\begin{proof}
Proposition \ref{pro:remres} implies $\overline{\langle F \rangle}_0 \subset {\mathcal I}_{\Gamma}'$
and hence $\Exp(t X) \in  {\mathcal I}_{\Gamma}'$ for any $t \in {\mathbb C}$, that is, the inner eigenvalue of the pair $(\Exp(tX),{\Gamma})$ is equal to $1$ for any
$t$.
 Let $\gamma (s)$ be an irreducible parametrization of $\G$.
We have that $\Exp(tX) (\gamma (s))$ is of the form $\g \circ \phi_{t} (s)$
where $\phi_{t}(0)=0$ and $\phi_{t}' (0)=1$ for any $t \in {\mathbb C}$.
Since $X(\g (s)) = \frac{\partial \g \circ \phi_{t}(s)}{\partial t}\big|_{t=0}$, it follows that
the multiplicity of the right hand side is at least $\nu (\g) +1$ and thus
$\nu (X|_{\Gamma}) = \nu (X(\gamma(s))) - \nu (\g' (s))  \geq 2$.
Since $\nu (X|_{\Gamma}) >  \nu (\g (s))$, any
non-zero tangent vector $v$  of $\Gamma$ at $0$ is in the kernel of $D_0X$.
\end{proof}

\subsection{Examples of diffeomorphisms possessing a formal invariant curve}\label{sec:cond_for_inv_curves}
Corollary \ref{cor:per_curve} provides a formal periodic curve for any $F \in \diff{}{2}$.
This is no longer true for dimension greater than $2$.
More precisely, there exist nilpotent analytic vector fields $X \in {\mathfrak X} \cn{3}$ with no
formal invariant curve by a theorem of G\'{o}mez Mont and Luengo  \cite{Gom-L}.
Then the diffeomorphism $\Exp(X)$ has no formal periodic curve by Proposition \ref{pro:perbinv}.

In this section we apply our results about infinitesimal generators to obtain conditions that guarantee the
existence of a formal periodic curve in dimension $n=3$.

A formal codimension 1 foliation $\mathcal{F}_\omega$ in $(\C^3,0)$ is determined by a non-zero 1-form
$$\omega = a_1 dx_1 + a_2 dx_2 + a_3 dx_3,\;\;
a_1, a_2, a_3 \in \hat{\mathcal{O}}_3,$$
 satisfying the integrability condition $\omega \wedge d \omega =0$.
Two 1-forms $\omega$ and $\omega'$ define the same foliation if there exists
$f \in \hat{K}_3 \setminus \{0\}$ such that $\omega = f \omega'$ where $\hat{K}_3$ is the field
of fractions of $\hat{\mathcal{O}}_3$.
We say that $\mathcal{F}_\omega$ has a formal integrating factor if there exists
$f \in \hat{K}_3 \setminus \{0\}$ such that
$d \left( \frac{\omega}{f} \right) =0$.
\begin{proposition}
\label{pro:curve3}
Let $F \in \diff{}{3}$. Suppose that either
\begin{enumerate}
	\item there exists a foliation ${\mathcal F}_{\omega}$
with no formal integrating factor such that
 $F^{*}\omega \wedge \omega =0$ or
	\item there exists $g \in \hat{\mathcal{O}}_3 \setminus {\mathbb C}$ such that $g \circ F = g$.
\end{enumerate}
Then $F^{m}$ has a formal invariant curve,
where $m$ is the index of embeddability of $F$.
\end{proposition}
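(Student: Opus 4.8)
The plan is to pass from $F$ to a formal vector field, transfer the geometric hypothesis to that vector field, invoke the existence theory for separatrices of vector fields in $(\C^3,0)$, and then come back. Fix $m=m(F)$ and let $X$ be an infinitesimal generator of $F^{m}$, which exists by Theorem~\ref{theorem:infinitesimal_generator}. By Proposition~\ref{pro:perbinv} applied with $I$ a formal curve, $F^{m}$ has a formal invariant curve if and only if $X$ does, so it suffices to produce a formal curve invariant for $X$. Recall also that, by Remark~\ref{rem:cc_iter} and Proposition~\ref{pro:axu}(iii), $\Exp(tX)\in\overline{\langle F^{m}\rangle}_0=\overline{\langle F\rangle}_0\subset\overline{\langle F\rangle}$ for every $t\in\C$.

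\emph{Transferring the hypothesis.} In case (1), I would first check, arguing as in Proposition~\ref{pro:group_pro} and using Proposition~\ref{pro:criterium}, that
\[\mathcal{G}_\omega=\{\Phi\in\Diffh\Cd3:\Phi^{*}\omega\wedge\omega=0\}\]
is a pro-algebraic subgroup of $\Diffh\Cd3$: for each $k$, the vanishing of the order-$k$ truncation of the $2$-form $\Phi^{*}\omega\wedge\omega$ is a system of polynomial equations on the $(k+1)$-jet of $\Phi$, and $\mathcal{G}_\omega$ is a group because $\Phi^{*}\omega=c_\Phi\,\omega$ and $\Psi^{*}\omega=c_\Psi\,\omega$ with $c_\Phi,c_\Psi\in\hat K_3$ force $(\Phi\circ\Psi)^{*}\omega=(c_\Phi\circ\Psi)\,c_\Psi\,\omega$. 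Since $F\in\mathcal{G}_\omega$ by hypothesis, we get $\overline{\langle F\rangle}\subset\mathcal{G}_\omega$, hence $\Exp(tX)\in\mathcal{G}_\omega$ for all $t$; differentiating the identity $\Exp(tX)^{*}\omega\wedge\omega=0$ at $t=0$ gives $\mathcal{L}_X\omega\wedge\omega=0$, i.e. $X$ leaves the foliation $\mathcal{F}_\omega$ invariant. In case (2), after replacing $g$ by $g-g(0)$ we may assume $g(0)=0$; the ideal $(g)$ is then invariant for $F=F^{1}$, so by Proposition~\ref{pro:perbinv} it is invariant for $X$, and since in characteristic zero the radical of a differential ideal is differential, the reduced hypersurface $S=\{g=0\}$ is a formal invariant hypersurface of $X$.

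\emph{Producing the curve.} In case (1), $X$ is a formal vector field on $(\C^3,0)$ leaving invariant a germ of codimension-one foliation $\mathcal{F}_\omega$ with no formal integrating factor; by the theorem of Cerveau and Lins Neto \cite{Cerveau-Lins:cod2} this implies that $X$ has a formal invariant curve. In case (2), $X$ leaves invariant the formal hypersurface $S$; reducing the singularities of $S$ and dividing the pulled-back vector field by the exceptional components makes it tangent to a smooth surface $\widetilde S$, and Camacho--Sad's theorem applied to $X|_{\widetilde S}$ yields a formal invariant curve of $X$ on $\widetilde S$ not contained in the exceptional divisor, whose image is a formal invariant curve of $X$. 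In both cases, by the first paragraph, $F^{m}$ has a formal invariant curve.

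\emph{Main difficulty.} The substantive input is the last step, namely obtaining a separatrix of the vector field $X$ in dimension three: this rests on the Cerveau--Lins Neto theorem in case (1) and on classical separatrix theory together with reduction of singularities of surfaces in case (2), everything carried out in the formal category. A secondary technical point is the pro-algebraicity of $\mathcal{G}_\omega$; it is the natural analogue of Proposition~\ref{pro:group_pro}, but one must handle the pull-back of a formal $1$-form rather than composition with a function and, in particular, verify that the truncations are algebraic \emph{subgroups} of $D_k$.
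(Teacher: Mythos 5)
Your overall skeleton (pass to an infinitesimal generator $X$ of $F^{m}$, transfer the hypothesis to $X$ by pro-algebraicity, produce a formal invariant curve of $X$ in dimension three, and return to $F^{m}$ via Proposition~\ref{pro:perbinv}) is the same as the paper's, but in case (1) there is a genuine gap at the decisive step. What you actually establish is $L_{X}\omega\wedge\omega=0$, i.e.\ that $X$ \emph{preserves} the foliation; the Cerveau--Lins Neto statement that the paper uses (and has to re-prove in the formal category) is: if $\omega(X)=0$, i.e.\ $X$ is \emph{tangent} to ${\mathcal F}_\omega$, then $X$ has a formal invariant curve --- indeed the curve produced is $\Sing(\omega)$ after dividing $\omega$ by the gcd of its coefficients, via the de Rham--Saito lemma \cite{Saito:lemma} and the codimension bound of \cite{Medeiros:pforms}. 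There is no citable theorem of the form ``a vector field preserving a codimension-one foliation with no formal integrating factor has an invariant curve''; the absence of integrating factors is used exactly in the step you skip: by Cerveau--Mattei \cite{Cer-M}, $L_{X}\omega\wedge\omega=0$ implies that either $\omega(X)=0$ or $\omega(X)$ is a formal integrating factor of $\omega$, and the hypothesis rules out the second alternative, giving $\omega(X)=0$. Without this reduction your invocation of \cite{Cerveau-Lins:cod2} does not apply; and even with it, the formal version of that proposition is not off-the-shelf --- the paper supplies the adaptation.

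In case (2) you discard most of what the transfer can give and end up with an argument that does not work as written. Proposition~\ref{pro:perbinv} only yields $X(g)\in(g)$, i.e.\ invariance of the single hypersurface $\{g=0\}$, and you then propose to resolve this (formal, possibly reducible) surface and apply Camacho--Sad to $X|_{\widetilde S}$. But Camacho--Sad is a statement about a germ at a point of $(\C^2,0)$; on the resolved surface the restricted vector field lives along a compact exceptional curve, and producing a separatrix \emph{not} contained in the exceptional divisor requires the global index-theoretic argument --- this is precisely the separatrix problem for foliations on singular surfaces, which is known to need hypotheses on the singularity and fails in general, and in your setting it would moreover have to be carried out formally. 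The paper avoids all of this: arguing as in Proposition~\ref{pro:group_pro}, the group $\{L\in\Diffh\Cd 3: g\circ L=g\}$ is pro-algebraic, so $g\circ\Exp(tX)=g$ for every $t$ and hence $X(g)=0$ identically; then case (2) is subsumed in the same tangency proposition with $\omega=dg$ (no integrating-factor hypothesis is needed there, only $\omega(X)=0$). The fix for your case (2) is to upgrade $X(g)\in(g)$ to $X(g)=0$ in this way, not to resolve the surface.
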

The two cases are of different nature. Namely, in case (1) we are requiring that $F$ preserves a
foliation with ``poor" integrability properties whereas in  case (2) we are asking $F$ to preserve
the ``fibers" of $g$.
\begin{lemma}
\label{lem:inf_tangent}
Suppose that the hypotheses of Proposition \ref{pro:curve3} are satisfied.
Set  $\omega= dg$ in case (2).
Then $\omega (X)=0$
for any infinitesimal generator $X$ of $F^{m}$.
\end{lemma}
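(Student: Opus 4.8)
The plan is to transfer the geometric constraint satisfied by $F$ to the whole one-parameter group $\{\Exp(tX)\}_{t\in\C}$ and then linearize in $t$. Since $X$ is an infinitesimal generator of $F^{m}$, it lies in the Lie algebra $\mathfrak g$ of $\overline{\langle F\rangle}$, so $\Exp(tX)\in\overline{\langle F\rangle}_0\subset\overline{\langle F\rangle}=\varprojlim G_k$ for every $t\in\C$ by Proposition~\ref{pro:axu}(iii). On the other hand, for each $k\ge 1$ the set $W_k\subset D_k$ of $k$-jets of formal diffeomorphisms satisfying the constraint in question --- $g\circ L=g$ in case (2), and $L^{*}\omega\wedge\omega=0$ (to the relevant order) in case (1) --- is cut out by polynomial equations in the Taylor coefficients of $L$, hence Zariski closed, and it contains $\pi_k(\langle F\rangle)$ because every iterate $F^{n}$, $n\in\Z$, satisfies the constraint. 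Therefore $G_k=\overline{\pi_k(\langle F\rangle)}\subset W_k$, so $\pi_k(\Exp(tX))\in W_k$ for all $k$ and all $t$; passing to the projective limit and using $\bigcap_k{\mathfrak m}^{k+1}=0$, we conclude that $\Exp(tX)$ satisfies the constraint for every $t\in\C$. (Alternatively one may show, as in Proposition~\ref{pro:group_pro}, that the stabilizer of $g$, resp. of $\mathcal F_\omega$, is a pro-algebraic subgroup of $\Diffh\Cd 3$; only the Zariski-closedness at each finite level is actually used.)

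In case (2) this gives $g\circ\Exp(tX)=g$ for all $t$. Since $g\circ\Exp(tX)=\sum_{j\ge0}t^{j}X^{j}(g)/j!$, differentiating at $t=0$ yields $X(g)=0$, that is, $\omega(X)=dg(X)=X(g)=0$.

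In case (1) we obtain $\Exp(tX)^{*}\omega\wedge\omega=0$ for all $t$. Using $\tfrac{d}{dt}\big|_{t=0}\Exp(tX)^{*}\omega=\mathcal L_X\omega$ and differentiating at $t=0$ gives $(\mathcal L_X\omega)\wedge\omega=0$, so $\mathcal L_X\omega=h\,\omega$ for some $h\in\hat{K}_3$. Now combine Cartan's formula $\mathcal L_X\omega=d(\iota_X\omega)+\iota_Xd\omega$, the de Rham division of the integrability relation $\omega\wedge d\omega=0$ --- which over $\hat{K}_3$ produces a $1$-form $\eta$ with $d\omega=\eta\wedge\omega$ --- and the identity $\iota_X(\eta\wedge\omega)=\eta(X)\,\omega-\omega(X)\,\eta$. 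Writing $u:=\omega(X)=\iota_X\omega$, these give $du+(\eta(X)-h)\,\omega=u\,\eta$. Suppose $u\ne 0$. Then $\eta=\tfrac{du}{u}+\tfrac{\eta(X)-h}{u}\,\omega$, hence $d\omega=\eta\wedge\omega=\tfrac{du}{u}\wedge\omega$ and therefore $d\!\left(\tfrac{\omega}{u}\right)=\tfrac1u\!\left(d\omega-\tfrac{du}{u}\wedge\omega\right)=0$; thus $f:=u=\omega(X)\in\hat{K}_3\setminus\{0\}$ would be a formal integrating factor of $\mathcal F_\omega$, contradicting the hypothesis. Hence $\omega(X)=0$. (Note that case (2) is not a particular case of (1): there $F^{*}(dg)=dg$, but $\mathcal F_{dg}$ does admit the integrating factor $1$, so the above contradiction is unavailable and one must use the stronger hypothesis $g\circ F=g$ directly.)

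The main obstacle is the first, ``globalization'' step in case (1): making rigorous that $\Exp(tX)$ preserves $\mathcal F_\omega$. The care needed there is bookkeeping of jet orders (pulling back a $1$-form costs one derivative, so $L^{*}\omega\wedge\omega$ truncated in degree $k-1$ depends on $J_kL$) together with the observation that one should argue with Zariski-closed \emph{subsets} $W_k\subset D_k$ rather than with subgroups, since proving the stabilizer of $\mathcal F_\omega$ is a subgroup at finite level would force one to control the rational multiplier $f$ in $F^{*}\omega=f\omega$. The other mildly technical point is the de Rham division $\omega\wedge d\omega=0\Rightarrow d\omega=\eta\wedge\omega$: over $\hat{\mathcal{O}}_3$ this is Malgrange's/Saito's division theorem after clearing the common factor of the coefficients of $\omega$, but for the argument above it suffices to perform the division over the fraction field $\hat{K}_3$, where it is elementary linear algebra on Kähler differentials.
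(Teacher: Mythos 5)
Your proof is correct and follows the same overall strategy as the paper: you globalize the invariance property from $F$ to the one-parameter group $\Exp(tX)$ via Zariski closure at each jet level together with Proposition~\ref{pro:axu}, then differentiate at $t=0$ to obtain $X(g)=0$ in case (2) and $(\mathcal L_X\omega)\wedge\omega=0$ in case (1). The two points where you diverge are minor but worth noting. First, the paper proves that the stabilizers ${\mathcal I}_\omega$ and ${\mathcal I}_g$ are pro-algebraic \emph{groups} (via Proposition~\ref{pro:criterium}), whereas you only use Zariski-closed subsets $W_k\subset D_k$ containing $\pi_k(\langle F\rangle)$; this is indeed sufficient, since $G_k=\overline{\pi_k(\langle F\rangle)}\subset W_k$ already forces every element of $\overline{\langle F\rangle}$ (in particular $\Exp(tX)$) to satisfy the constraint, and it spares you the group-theoretic verification at finite level. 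Second, for the key implication ``$(\mathcal L_X\omega)\wedge\omega=0$ implies $\omega(X)=0$ or $\omega(X)$ is a formal integrating factor,'' the paper simply cites Cerveau--Mattei \cite{Cer-M}, while you rederive it by an explicit computation over $\hat K_3$ (Cartan's formula, the division $d\omega=\eta\wedge\omega$ from integrability, and the identity $d(\omega/u)=0$ for $u=\omega(X)\neq0$); your computation is correct and makes the argument self-contained at the cost of a little extra calculus, and your closing remark that case (2) cannot be subsumed into case (1) (because $dg$ has the integrating factor $1$) agrees with the paper's treatment of the two cases as genuinely different.
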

\begin{proof}
Assume that we are in case (1).  Analogously as in Proposition \ref{pro:group_pro},
we can show that the group
\[ {\mathcal I}_{\omega} = \{ L  \in \diffh{}{3} : L^{*} \omega \wedge \omega =0 \} \]
is of the form $\varprojlim H_{k}$,
where
\[ H_k = \{ L \in D_k : J_k (L^{*} \omega) = J_k (h_k \omega) \ \mathrm{for \ some} \
h_k \in \hat{\mathcal{O}}_3 \} \]
is an algebraic subgroup of $D_k$ and $\pi_{k,l} (H_k) \subset H_l$ for all $k \geq l \geq 1$.
Therefore ${\mathcal I}_{\omega}$ is pro-algebraic by Proposition \ref{pro:criterium}.
We deduce that $\overline{\langle F \rangle}$ is contained in ${\mathcal I}_{\omega}$ and
hence $\Exp(t X)^{*} \omega \wedge \omega =0$ for any infinitesimal generator $X$ of $F^m$ and any $t \in {\mathbb C}$ by
Proposition \ref{pro:axu}. As a consequence the Lie derivative $L_{X} \omega$ satisfies
$L_{X} \omega \wedge \omega =0$. This implies either that $\omega (X)=0$ or that $\omega (X)$
is a formal integrating factor of $\omega$ (see \cite[Chapitre III, Proposition 1.3]{Cer-M}). Since the latter possibility is excluded
by hypothesis, we obtain  $\omega (X)=0$.

Assume that we are in case (2). The group
\[ {\mathcal I}_{g} = \{ L  \in \diffh{}{3} : g \circ L = g \} \]
is of the form $\varprojlim H_{k}$,
where $H_k = \{ L \in D_k : J_k  (g \circ L)  = J_k g \}$
is an algebraic subgroup of $D_k$ and $\pi_{k,l} (H_k) \subset H_l$ for all $k \geq l \geq 1$.
Arguing as in the previous case, we obtain $g \circ \Exp(t X)=g$ for any infinitesimal generator $X$ of $F^m$ and any $t \in {\mathbb C}$. We get
$\omega (X)= dg (X) = X(g)=0$.
\end{proof}
Proposition \ref{pro:curve3} is an immediate consequence of Lemma \ref{lem:inf_tangent}
and next result.
\begin{proposition}
Let $X \in\cvf\Cd3$. Consider a formal codimension $1$
foliation $\mathcal{F}_\omega$ such that
$\omega (X)=0$. Then $X$ has a formal invariant curve.
\end{proposition}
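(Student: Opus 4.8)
The plan is to produce the invariant curve of $X$ either directly inside the singular locus of $\mathcal F_\omega$, or by restricting $X$ to a formal hypersurface invariant for $\mathcal F_\omega$. First I would make two harmless normalizations. Replacing $\omega$ by $\omega/\gcd(a_1,a_2,a_3)$, I may assume $\omega$ is \emph{reduced}, i.e. $\gcd(a_1,a_2,a_3)=1$; this changes neither $\mathcal F_\omega$ nor the relation $\omega(X)=0$. I may also assume $X$ is reduced, for otherwise $X=hX'$ with $h\in{\mathfrak m}$ a non-unit and every formal curve contained in $\{h=0\}$ is already invariant for $X$. Then I would record two elementary identities. Since $i_X\omega=\omega(X)=0$, Cartan's formula gives $L_X\omega=i_Xd\omega$, and contracting $\omega\wedge d\omega=0$ with $X$ yields $\omega\wedge L_X\omega=0$; as $\omega$ is reduced this forces $L_X\omega=g\,\omega$ for some $g\in\hat{\mathcal O}_3$, and a direct computation from $L_X\omega=g\,\omega$ then shows $X(a_j)\in(a_1,a_2,a_3)$ for each $j$, i.e. $X$ preserves the singular ideal $I_\omega=(a_1,a_2,a_3)$ of $\mathcal F_\omega$ (equivalently the flow $\Exp(tX)$ preserves $\mathcal F_\omega$ and $\Sing(\mathcal F_\omega)$, which also follows from Proposition~\ref{pro:group_pro}). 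Secondly, if $\{h=0\}$ is a formal $\mathcal F_\omega$-invariant hypersurface, so that $dh\wedge\omega\equiv 0\pmod h$, then contracting with $X$ and using $i_X\omega=0$ gives $X(h)\,\omega\equiv 0\pmod h$, whence $X(h)\in(h)$: \emph{$X$ is tangent to every formal $\mathcal F_\omega$-invariant hypersurface.}

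Next the plan is to distinguish cases; since $\omega$ is reduced, $V(I_\omega)=\Sing(\mathcal F_\omega)$ has codimension $2$ or $3$. If $\omega(0)\neq 0$, then $\mathcal F_\omega$ is regular at $0$ and the leaf through $0$ is a smooth formal invariant hypersurface $S\ni 0$. If $\omega(0)=0$ and $\Sing(\mathcal F_\omega)$ has a $1$-dimensional component passing through $0$: since $X$ preserves $I_\omega$ it preserves $\sqrt{I_\omega}$ and each of its minimal primes (in characteristic zero a vector field leaving an ideal invariant leaves invariant its radical and all its associated primes), so the corresponding minimal prime of height $2$ defines a formal $X$-invariant curve, as desired. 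Finally, if $\omega(0)=0$ and $\Sing(\mathcal F_\omega)$ reduces to the origin — hence has codimension $3$ — Malgrange's singular Frobenius theorem, in its formal version, yields a non-constant formal first integral $f$ of $\mathcal F_\omega$, and the fibre of $f$ through the origin is a formal $\mathcal F_\omega$-invariant hypersurface, one of whose irreducible components through $0$ we call $S$. In the two surviving cases we have produced a formal $\mathcal F_\omega$-invariant hypersurface $S$ through the origin.

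To conclude, I would use the second identity: $X$ is tangent to $S$, hence induces a formal vector field $X|_S$ on $S$, which is nonzero precisely because $X$ is reduced. Resolving the singularities of the formal surface $S$ gives a proper birational morphism $\rho\colon\widehat S\to S$ with $\widehat S$ smooth; the strict transform of $X$ is then a nonzero formal vector field $\widehat X$ on $\widehat S$, tangent to $\widehat S$ in a neighbourhood of the exceptional fibre $C=\rho^{-1}(0)$. On the smooth surface $\widehat S$ the formal Camacho--Sad theorem \cite{Cam-S} applies, and I would argue that one can select a formal $\widehat X$-invariant curve $\widehat\Gamma$ through a point of $C$ that is \emph{not} contained in $C$: an $\widehat X$-invariant component of $C$ has negative self-intersection, hence carries a singularity of $\widehat X$ at which there is a separatrix transverse to $C$, whereas a non-invariant (dicritical) component of $C$ carries integral curves of $\widehat X$ transverse to $C$. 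Then $\Gamma=\rho(\widehat\Gamma)$ is the sought formal invariant curve of $X$ at $0\in\C^3$.

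The hard part is this last step: transferring information from the foliation $\mathcal F_\omega$ to the tangent vector field $X$ through a resolution of the possibly singular invariant surface $S$, and — above all — making sure that the curve produced by Camacho--Sad on $\widehat S$ is not collapsed to the origin by $\rho$. This index-theoretic bookkeeping (choosing, at a final singular point, a separatrix transverse to the exceptional divisor, using that $X$ vanishes at $0$ together with the negativity of the exceptional curves) is essentially the content of the result of Cerveau and Lins Neto \cite{Cerveau-Lins:cod2} that motivates this section. An alternative, avoiding Malgrange's theorem altogether, is to run the reduction of singularities of codimension-one foliations in dimension three applied to $\mathcal F_\omega$, carrying the tangent vector field $X$ along, until the foliation acquires simple singularities possessing invariant hypersurfaces, and then argue as above.
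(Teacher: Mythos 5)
Your first two cases are fine: the reductions, the identity $L_X\omega=g\,\omega$ and its consequence that $X$ preserves $I_\omega$, the tangency of $X$ to $\mathcal F_\omega$-invariant hypersurfaces, and the Seidenberg-type argument giving an invariant curve when $\Sing(\mathcal F_\omega)$ has a one-dimensional component are all correct, and in that case your curve is essentially the one the paper produces. The genuine gap is in the remaining case (and in the way you close the regular/codimension-$3$ cases through an invariant surface): after restricting $X$ to a possibly singular formal surface $S$ (a fibre of the Malgrange first integral) and resolving $S$, the assertion that Camacho--Sad on $\widehat S$ plus ``negative self-intersection'' bookkeeping yields an invariant curve \emph{not} contained in the exceptional fibre is not a consequence of the local separatrix theorem \cite{Cam-S}; it is exactly the separatrix problem for foliations on singular surfaces, and that statement is false in general. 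For instance $S=\{x^3+y^3+z^3=0\}$ --- a perfectly possible fibre of a first integral in your codimension-$3$ case --- is the cone over an elliptic curve, and such cones carry foliations whose only invariant curve upstairs is the exceptional elliptic curve itself, so nothing survives the blow-down; Camacho's extension of the separatrix theorem to singular surfaces needs the resolution divisor to be a tree of rational curves, a hypothesis you have no control over here. You would need to exploit the extra structure (that $X$ is the restriction of an ambient vector field annihilating $\omega$), and you do not.

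Your attribution of this missing step to \cite{Cerveau-Lins:cod2} is also inaccurate: neither Cerveau--Lins Neto nor the paper argue through an invariant surface at all. The paper's proof first disposes of the trivial case $\dim\Sing(X)\geq 1$ (any curve inside $\Sing(X)$ is invariant, a cheaper reduction than your ``$X$ reduced''), then shows that with $\dim\Sing(X)=0$ the case $\codim\Sing(\omega)=3$ is \emph{impossible}: writing $\eta=i_X(dx\wedge dy\wedge dz)$, the relation $\omega(X)=0$ reads $\omega\wedge\eta=0$, the de Rham--Saito lemma \cite{Saito:lemma} would give $\eta=\omega\wedge\theta$, and then $\Sing(\omega\wedge\theta)=\Sing(X)$ would be zero-dimensional, contradicting the bound $\codim\Sing\leq 2$ for singular products of two $1$-forms \cite{Medeiros:pforms}. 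Hence $\Sing(\omega)$ has codimension exactly $2$ and is itself the invariant curve. So the case your proposal leaves open (isolated singularity of $\omega$, isolated singularity of $X$) simply does not occur, but establishing that is the actual content of the proposition, and your proposal does not supply it.
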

This result is due to Cerveau and Lins Neto \cite[Proposition 3]{Cerveau-Lins:cod2} for holomorphic
foliations and vector fields. We just adapt their proof to the formal setting.
\begin{proof}
	If $\alpha$ is a formal differential form or a formal vector field, we denote by $\Sing(\alpha)$ the ideal of its coefficients in $\hat{\mathcal{O}}_3$.
We can suppose that the coefficients of $\omega$ have no common factor up to divide $\omega$
by the $\gcd$ of such coefficients. In other words we have
$\codim (\mathrm{Sing} (\omega)) \geq 2$.
We can assume $\dim(\mathrm{Sing} (X))=0$, otherwise the result is trivial. Moreover, this
implies $\omega (0) =0$ since otherwise the foliation $\mathcal{F}_\omega$ is equal to
$\mathcal{F}_{dx}$ up to a formal change of coordinates
and $dx (X)=0$ implies $X = b(x,y,z) \partial_y + c(x,y,z) \partial_z$
and then $\dim (\mathrm{Sing} (X)) \geq 1$.

Denote $\eta = i_{X} (dx \wedge dy \wedge dz)$.
The property $\omega (X)=0$ is equivalent to $\omega \wedge \eta =0$.
We claim  $\codim (\mathrm{Sing}(\omega))  \neq 3$, otherwise we can apply the
de Rham-Saito lemma \cite{Saito:lemma} to show that the $2$-form $\eta$ is of the form
$\omega \wedge \theta$ (the result works both in the formal and analytic settings).
We have
\[ \mathrm{Sing} (\omega \wedge \theta) =  \mathrm{Sing} (\eta) =  \mathrm{Sing}  (X) . \]
Hence $\dim(\Sing(\omega\wedge\theta))=0$. There exists $k \in {\mathbb N}$ such that if $\omega', \theta'$ are formal $1$-forms
such that $J_k \omega = J_k \omega'$ and $J_k \theta = J_k \theta'$ then
$\dim(\mathrm{Sing} (\omega' \wedge \theta'))=0$.
In particular, we get $\dim(\mathrm{Sing}  (J_k \omega \wedge J_k \theta))=0$.
This provides a contradiction since it is known that the codimension of the singular set of
the exterior product of two germs of holomorphic $1$-form has codimension less than or equal
to $2$ if it is singular at $(0,0,0)$ (see \cite[Lemma 3.1.2]{Medeiros:pforms}).

We deduce $\codim (\mathrm{Sing} (\omega)) = 2$ and hence
$\mathrm{Sing} (\omega)$ is a formal curve.
Let us remark that since $\omega(X)=0$, 
the curve  $\mathrm{Sing} (\omega)$ is invariant by $X$.
\end{proof}

\section{Reduction to Ramis-Sibuya form}\label{sec:RS-vector-fields}

In this section, we show that a pair $(F,\Gamma)$, where $F$ is a diffeomorphism and $\Gamma$ is a rationally neutral formal invariant curve of $F$ not contained in the set of fixed points of a non-trivial iterate of $F$, can be reduced, up to iterating $F$, to a pair $(\tilde{F},\tilde{\Gamma})$ in Ramis-Sibuya form. First, we perform such a reduction in the context of formal vector fields. Next, we use the results in Section~\ref{sec:infinitesimal} to adapt the reduction to diffeomorphisms.

\subsection{Ramis-Sibuya form for formal vector fields}\label{sec:RS-vector-fields-definition}
\begin{definition}\label{def:X-RS-form}
Let $X$ be a singular formal vector field at $0\in\C^n$ and let $\G$ be a formal invariant curve of $X$. We say that the pair $(X,\G)$ is in {\em Ramis-Sibuya form} (\emph{RS-form} for short) if $\G$ is non-singular and there exist analytic coordinates
$(x,\yy)$ at $0\in\C^n$ for which $\G$ is transversal to the hyperplane $x=0$ and such that $X$ is written as
\begin{equation}\label{eq:X-RS-form}
X=x^{q+1}(\lambda+b x^{\max (1,q)} + x^{q+1}
A(x,\yy))\frac{\partial}{\partial
x}+\left((D(x)+x^q C)\yy+x^{q+1}B(x,\yy)\right)\frac{\partial}{\partial\yy},
\end{equation}
where $q\geq 0$, $\lambda\in\C^*$, $b \in {\mathbb C}$,
$A(x,\yy)\in\C[[x,\yy]]$, $B(x,\yy)\in\C[[x,\yy]]^{n-1}$ and
\begin{enumerate}[(i)]
\item $D(x)$ is a diagonal  matrix of polynomials of
degree at most $q-1$ (equal to $0$ if $q=0$) and $C$ is a constant matrix,
\item $D(x)+x^{q}C\not\equiv 0$,
\item $D(x)$ commutes
 with $C$.
 \end{enumerate}
 The polynomial vector field $\lambda x^{q+1}\frac{\partial}{\partial x}+(D(x)+x^qC)\yy\frac{\partial}{\partial\yy}$ is called the {\em principal part} of $(X,\G)$ in the coordinates $(x,\yy)$.
 \end{definition}

Notice that $q+1$ is the multiplicity of the restricted vector field $X|_\G$ and thus the integer $q=q(X,\G)$ is well defined for the pair $(X,\G)$ and is independent of the coordinates. On the other hand, if the multiplicity of $X$ is $\nu(X)=\nu+1$ then $\nu\le q$ and $\nu$ coincides with the order at $x=0$ of the polynomial matrix $D(x)+x^qC$. Thus, the number $p=p(X,\G)=q-\nu\ge 0$, called the {\em Poincar\'{e} rank} of the pair $(X,\G)$, is also independent of the coordinates.

\begin{remark}\label{rk:RS-form} Assume that $(X,\G)$ is in RS-form, written
as (\ref{eq:X-RS-form}) in coordinates $(x,\yy)$.
\begin{enumerate}[(a)]
\item $\G$ is not contained in the singular locus of the vector field $X$.
\item Let $l\ge 1$ be the order of contact of $\G$ with the
$x$-axis, i.e. $\G$ admits a parametrization $(s,\bar{\g}(s))\in\C[[s]]^{n}$ where the minimum order of the components of $\bar{\g}(s)$ is equal to $l$. Then the invariance condition implies that the order in $x$ of any component of the vector $X(\yy)(x,0)\in\C[[x]]^{n-1}$
is at least $l+\nu$.
\item If $q\ge 1$ then, after a change of variables of the form $\bar{x}=ax$ where $a^q=-\lambda$, we may assume that $\lambda=-1$.
\item Denote by $Q_1(x),...,Q_l(x)$ the different polynomials in the diagonal of the matrix $D(x)$ and, up to reordering the $\yy$-variables, write $$D(x)=\diag(Q_1(x)I_{n_1},\dots,Q_l(x)I_{n_l}).$$
 The property $[D(x),C]=0$ implies that $C$ is block-diagonal $C=\diag(C^{1},\dots,C^{l})$ where $C^{j}$ has size $n_j$. After a linear change of variables of the form $\bar{\yy}=P\yy$, we may assume that the blocks of the matrix $C$ are in Jordan canonical form.
\end{enumerate}
\end{remark}

Let us justify our choice of the terminology in Definition~\ref{def:X-RS-form}. After dividing the
vector field in (\ref{eq:X-RS-form}) by $x^\nu$ times a unit, we can associate it to a system of $n-1$ formal ODEs
$$x^{p+1}\yy'=(\overline{D}(x)+x^p\overline{C}+O(x^{p+1}))\yy+O(x^{p+1}),$$
where $\overline{D}(x)=D(x)/\lambda x^\nu$ and $\overline{C}$ is a constant matrix. Such a system has a singular point at $x=0$ with {\em Poincar\'{e} rank}
equal to $p$ (unless possibly for $q=p=0$ if $\overline{C}=0$). Moreover, the properties assumed for the polynomial matrix
$D(x)+x^pC$ are essentially those considered in the work
of Ramis and Sibuya \cite{Ram-S}, devoted to proving {\em multisummability} of the
formal solution $\yy=\bar{\g}(x)$, where $(x,\bar{\g}(x))$ is a parametrization of $\G$, in the case where the coefficients of
the system are convergent.

\subsection{Blow-ups and ramifications along an invariant curve}

Let $X\in\cvf\Cd n$ be a singular vector field and let $\G$ be a formal invariant curve of $X$ not
contained in the singular locus of $X$.

 A germ of holomorphic map
$\phi:(\C^n,0)\to(\C^n,0)$ will be called a {\em permissible
transformation} for the pair $(X,\G)$ if it is of one of the
following types:

\begin{enumerate}
\item[1.] The germ of a holomorphic diffeomorphism.
\item[2.] Let $Z$ be a germ of non-singular analytic submanifold at $0\in\C^n$ which is invariant for $X$ (meaning that $X(g)\in I(Z)$ for any $g\in I(Z)$,
where $I(Z)$ denotes the ideal of holomorphic germs vanishing on
$Z$) and
 such that the tangent line of $\Gamma$ is transversal to $Z$.
  Let $\pi_Z:M\to U$ be the blow-up with center $Z$ and
  let $p\in\pi_Z^{-1}(0)$ be the point corresponding to the tangent of
  $\G$. Then there is
  an analytic chart $\tau$ of $M$ at $p$
  so that $\phi$ is the germ of $\pi_Z\tau^{-1}$ at $0\in\C^n$. We will
  say that $Z$ is a {\em permissible center} and that $\phi$ is a {\em permissible blow-up}.
    \item[3.] The curve $\G$ is non-singular, there are
    analytic coordinates $\zz=(z_1,...,z_n)$ at $0\in\C^n$
     such that $Z=\{z_1=0\}$ is invariant for $X$ and transversal to $\G$ and $\phi$ is the
    map
    $
    \phi(\zz)=(z_1^l,z_2,...,z_n)
    $ for some $l\in\N_{>0}$. We will
  say that $\phi$ is a {\em permissible $l$-ramification} (with {\em center} $Z$).
\end{enumerate}
In the last two cases, the non-singular hypersurface
$E_\phi=\phi^{-1}(Z)$ is called the {\em exceptional divisor of
$\phi$}. For convenience, $E_\phi=\{0\}$ in the case where
$\phi$ is a diffeomorphism. Notice that a permissible
transformation $\phi$ is a local diffeomorphism at every point in
the complement of $E_\phi$.

The following result is quite well known (see for instance \cite{Can-M-R} for the three-dimensional case). We include a proof for the sake of completeness.

\begin{proposition}\label{pro:permissible-X}
Let $\phi:\Cd n\to\Cd n$ be a permissible transformation for $(X,\G)$. There exist a unique formal curve $\wt{\G}$ at
$0\in\C^n$ such that $\phi^*\G\subset\wt{\G}$ (where
$\phi^*\G=\{g\circ\phi\,:\,g\in\G\}$) and a unique formal vector field $\wt X$ at $0\in\C^n$ such that $\phi_*\wt X=X$. Moreover, $\wt{X}$ is singular and has $\wt\G$ as an invariant curve. In addition, the multiplicities of the restrictions satisfy $\nu(\wt X|_{\wt\G})\ge\nu(X|_\G)$. We will call $\wt{X}$ and $\wt\G$ the {\em transforms} of $X$ and $\G$ by $\phi$, respectively.
\end{proposition}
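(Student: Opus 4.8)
The plan is to treat each of the three types of permissible transformation separately, since the cases are of genuinely different difficulty, and then check the uniform conclusions (singularity, invariance, inequality on multiplicities) at the end. For type~1, $\phi$ is a diffeomorphism, so $\wt{X}=\phi^{*}X$ and $\wt{\G}=\phi^{*}\G$ are defined verbatim, everything is tautological, and $\nu(\wt{X}|_{\wt\G})=\nu(X|_\G)$. The substance is in types~2 and~3.

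For the blow-up case (type~2), I would first work in the local chart $\tau$ at the point $p$ corresponding to the tangent line of $\G$. Since $Z$ is invariant for $X$, the pullback of the foliation (equivalently, the vector field divided by a suitable power of the equation of $E_\phi$) extends across $E_\phi$; concretely, writing $Z=\{z_1=\dots=z_k=0\}$ in adapted coordinates, the blow-up chart at $p$ has the form $z_1=u_1,\ z_i=u_1 u_i$ for $2\le i\le k$, $z_j=u_j$ for $j>k$, and invariance of $Z$ guarantees that $\phi^{*}X=X(z_1\circ\phi)\,\partial_{u_1}+\dots$ is divisible by $u_1^{m}$ for a well-defined $m\ge 0$; one sets $\wt{X}:=u_1^{-m}\phi^{*}X$, which is the unique formal vector field with $\phi_*\wt{X}=u_1^{m}X=$ (a unit times) $X$ — here one must be slightly careful about what ``$\phi_*\wt X = X$'' means, since $\phi$ is not invertible, but it is the standard convention that $\wt X$ is $X$ divided by the largest power of $u_1$, and uniqueness is clear because $\C[[u_1,\dots,u_n]]$ is a domain. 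The transform $\wt{\G}$ is the strict transform: the point $p$ lies on it because of the transversality hypothesis between the tangent line of $\G$ and $Z$ (this is exactly what makes the strict transform pass through $p$ rather than escaping to infinity in the fibre), and one checks $\phi^{*}\G\subset\wt{\G}$ because $g\circ\phi$ vanishes on $\wt\G$ for every $g\in\G$. That $\wt{\G}$ is invariant for $\wt{X}$ follows because invariance is preserved under the dominant morphism $\phi$: a parametrization $\g(s)$ of $\G$ lifts to a parametrization $\wt\g(s)$ of $\wt\G$ with $\phi\circ\wt\g=\g$, and differentiating together with the invariance identity \eqref{eq:invariant-curve-for-X} for $X$ gives the same identity for $\wt X$.

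For the ramification case (type~3), take the adapted coordinates $\zz$ in which $Z=\{z_1=0\}$ is invariant and transversal to the non-singular curve $\G$. Again, invariance of $Z=\{z_1=0\}$ forces $X(z_1)$ to be divisible by $z_1$, hence $\phi^{*}(X(z_1))=z_1^{l}\cdot(\text{something})$, and pulling back along $\phi(\zz)=(z_1^{l},z_2,\dots,z_n)$ one gets $\phi^{*}X=(\text{series divisible by }z_1^{m'})\,\partial_{z_1}+\dots$; dividing by the maximal power $z_1^{m'}$ defines $\wt{X}$, unique for the same domain reason, and satisfying $\phi_*\wt X = X$ in the appropriate sense. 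The strict transform $\wt\G$ is obtained by substituting $s\mapsto s^{1/l}$ into the parametrization — more precisely, if $\G=(s,\bar\g(s))$ with $\bar\g\in(s\C[[s]])^{n-1}$, one checks $\bar\g(s^{l})$ is again a power series and $\wt\G=(s,\bar\g(s^{l}))$ works, which makes sense precisely because $\G$ is transversal to $\{z_1=0\}$ so that $z_1$ is a uniformizing parameter. Invariance of $\wt\G$ for $\wt X$ is checked as before.

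Finally, the common conclusions. Singularity of $\wt{X}$: in every case $\wt{X}$ is obtained from $\phi^{*}X$ by dividing by a monomial in the exceptional variable, and since $X(0)=0$ one has to rule out $\wt X(0)\ne 0$; but $\wt X(0)\ne 0$ would make the orbit of $\wt X$ through $0$ a non-singular curve mapping via $\phi$ into the singular locus structure of $X$ — cleaner is to argue directly on multiplicities: the order of $\phi^{*}X$ along $E_\phi$ is strictly larger than the power we divide by unless $\G$ were contained in $\Sing(X)$, which is excluded by hypothesis, and this also yields the inequality $\nu(\wt X|_{\wt\G})\ge\nu(X|_\G)$. Indeed, using the parametrizations, $\wt X|_{\wt\G}=h_{\wt\g}(s)\partial_s$ with $h_{\wt\g}$ related to $h_\g$ by the chain rule and the division by $z_1^{m'}$ (resp. $u_1^{m}$): a direct order computation — composing $h_\g$ with $s\mapsto s^{l}$ multiplies the order by $l$ while the Jacobian factor $\g'(s)\mapsto\wt\g'(s)$ contributes at most a comparable amount, and the monomial division removes exactly the order picked up along the divisor — gives $\nu(\wt X|_{\wt\G})\ge\nu(X|_\G)$, with equality for diffeomorphisms and for ramifications it is in fact $\nu(\wt X|_{\wt\G})= l\,\nu(X|_\G) - (\text{divisor contribution})$, which one checks is $\ge\nu(X|_\G)$. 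The main obstacle I anticipate is bookkeeping the exact power of the exceptional variable by which $\phi^{*}X$ is divisible in the blow-up case when $\dim Z\ge 2$ (so that the blow-up chart involves the auxiliary ratios $u_i=z_i/z_1$), and matching this against the order of contact of $\G$ with $Z$ so as to get the multiplicity inequality in the right direction; everything else is routine manipulation of formal parametrizations in a domain.
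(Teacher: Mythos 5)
There is a genuine gap, and it lies at the heart of your construction of $\wt X$: you define $\wt X$ as $\phi^{*}X$ divided by the \emph{largest} power of the exceptional coordinate, and you reinterpret the condition ``$\phi_*\wt X=X$'' as holding only up to such a factor, calling this ``the standard convention''. That is not what the proposition asserts, and it is not what is needed later. The statement requires the exact identity $\phi_*\wt X=X$, i.e.\ $\wt X(g\circ\phi)=X(g)\circ\phi$ for every $g$; the correct transform is the \emph{total} pullback, with no division at all. Its coefficients are formal (pole-free) not because one strips off exceptional factors, but because invariance of the center $Z$ forces the relevant divisibility: in the blow-up chart one gets exactly the formulas of \eqref{eq:X-after-blowup}, and in the ramification case $\wt X=\tfrac{z_1\bar a_1(\phi(\zz))}{l}\parcial{z_1}+\sum_{i\ge2}a_i(\phi(\zz))\parcial{z_i}$ with $a_1=z_1\bar a_1$. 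In the situations this paper actually uses, the exact pullback \emph{is} divisible by a positive power of the exceptional variable (e.g.\ it is divisible by $x^{\nu(X)-1}$ after a punctual blow-up), so your normalization produces a genuinely different vector field: it no longer satisfies $\phi_*\wt X=X$ (only a meromorphic multiple of it does), it can fail to be singular at the origin, and it can violate $\nu(\wt X|_{\wt\G})\ge\nu(X|_\G)$, since dividing by $u_1^{m}$ lowers $\nu(\wt X|_{\wt\G})$ by $m$ times the order of the exceptional equation along $\wt\G$. The ``bookkeeping of the exact power'' that you flag as the main obstacle is an artifact of this wrong normalization; with the exact pullback the computation is immediate: for a blow-up one gets $\wt X|_{\wt\g(s)}=h(s)\wt\g'(s)$ with the \emph{same} $h$ as for $X$ (so equality of multiplicities), and for an $l$-ramification $\wt X|_{\wt\g(s)}=l^{-1}s^{1-l}h(s^{l})\wt\g'(s)$, whose order is $l\,\nu(h)-(l-1)\ge\nu(h)$ because $\nu(h)\ge1$. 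The exact identity is also indispensable downstream: Proposition~\ref{pro:permissible-F} needs $\wt F=\Exp\wt X$ to satisfy $\phi\circ\wt F=F\circ\phi$, which fails if $\wt X$ has been rescaled by a unit or a divisor power, and the Ramis--Sibuya reduction deliberately keeps the factors $x^{e}$ in front of the transformed vector fields.

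Two further points are under-argued. Singularity of $\wt X$ at $0$ in the blow-up case is not ``ruled out by the orbit through $0$'' nor by your order comparison; with the correct construction it follows from the observation that, because $\G$ is invariant, not contained in $\Sing(X)$, and tangent to the $z_1$-axis, the components $a_j$ ($j\ge2$) contain no monomial $cz_1$ with $c\ne0$, so the middle coefficients in \eqref{eq:X-after-blowup} vanish at the origin. And the \emph{uniqueness} of $\wt\G$ among formal curves containing $\phi^{*}\G$ is a claim of the proposition that you do not address: for blow-ups it is proved by comparing parametrizations, and for ramifications it genuinely uses that $\G$ is non-singular (the paper notes uniqueness fails otherwise, e.g.\ for $y^{2}-x^{3}$ under $x\mapsto x^{2}$), so this hypothesis must enter your argument explicitly rather than only through the remark that $z_1$ is a uniformizing parameter.
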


\begin{proof}
The case where $\phi$ is a germ of a diffeomorphism is clear.

Suppose that $\phi$ is a permissible blow-up with center $Z$. Consider
analytic coordinates $\zz=(z_1,z_2,...,z_n)$ so that the tangent line of
$\G$ is tangent to the $z_1$-axis and
$Z=\{z_1=z_2=\cdots=z_t=0\}$ where $t=\codim Z$ (thus $I(Z)$ is
generated by $z_1,...,z_t$). We may write $\phi:\Cd n\to\Cd n$ as
\begin{equation}\label{eq:expression-blow-up}
\phi(\zz)=(z_1,z_1z_2,...,z_1z_t,z_{t+1},...,z_n).
\end{equation}
Let $\g(s)=(\g_1(s),...,\g_n(s))\in\C[[s]]^n$ be an irreducible parametrization
of $\G$ in the coordinates $\zz$. Then
$
\nu(\G)=\nu(\g_1(s))<\nu(\g_j(s))$ for $j=2,...,n,$
where $\nu$ denotes the order in $s$. Also,
\begin{equation}\label{eq:gamma-tilde}
\wt{\g}(s)=
\Bigl(\g_1(s),\frac{\g_2(s)}{\g_1(s)},...,\frac{\g_t(s)}{\g_1(s)},\g_{t+1}(s),...,\g_n(s)\Bigr)
\in\C[[s]]^n
\end{equation}
is a parametrization of a formal curve $\wt{\G}$ which satisfies
$\phi^*\G\subset\wt{\G}$. The uniqueness of $\wt{\G}$ can be seen
as follows: if
$\bar{\g}(s)=(\bar{\g}_1(s),\bar{\g}_2(s),...,\bar{\g}_n(s))$ is a
parametrization of another formal curve $\bar{\G}$ satisfying
$\phi^*\G\subset\bar{\G}$ then we will have that
$\phi\circ\bar{\g}(s)$ is another parametrization of $\G$ and
necessarily $\phi\circ\bar{\g}(s)=\g(\sigma(s))$ where
$\sigma(s)\in\C[[s]]$. Using the expression of $\phi$ and equation
(\ref{eq:gamma-tilde}) one shows that
$\bar{\g}(s)=\wt{\g}(\sigma(s))$ and thus $\overline{\G}=\wt{\G}$.

Write $X=\sum_{i=1}^n a_i(\zz)\frac{\partial}{\partial z_i}$. Since $\G$ is invariant and not contained in the singular locus of
$X$, we have that the vector $X|_{\g(s)}\in\C[[s]]^{n}$ is a non-zero multiple of $\g'(s)$ and hence $\nu(a_1(\g(s)))<\nu(a_j(\g(s)))$ for $j=2,...,n$. So $a_j(\zz)$ cannot contain a monomial of the form $cz_1$, with $c\neq0$, for $j=2,...,n$.
On the other hand, the condition of $Z$ being invariant implies
that, for $j=1,...,t$, $a_j(\zz)\in I(Z)$ and hence
$a_j(\phi(\zz))$ is divisible by $z_1$. Using these two properties, the
vector field $\wt{X}=\sum_{i=1}^n
\tilde{a}_i(\zz)\frac{\partial}{\partial z_i}$ defined by
\begin{equation}\label{eq:X-after-blowup}
\left\{
  \begin{array}{ll}
    \tilde{a}_j(\zz)=\dfrac{a_j(\phi(\zz))-z_ja_1(\phi(\zz))}{z_1}, &
 \hbox{for $j=2,...,t$;} \\[7pt]
    \tilde{a}_j(\zz)=a_j(\phi(\zz)), & \hbox{for $j\in\{1,t+1,...,n\}$,}
  \end{array}
\right.
\end{equation}
is formal and singular at 0, and it is the unique that satisfies $\phi_\ast\wt{X}=X$. Since $\G$ is invariant for
$X$, we get $X|_{\g(s)}=h(s)\g'(s)$ for some $h(s)\in\C[[s]]$ and one obtains that $\wt X|_{\wt\g(s)}=h(s)\wt{\g}'(s)$, proving that $\wt \G$ is invariant for $X$ and that $\nu(\wt X|_{\wt\G})=\nu(X|_\G)$.

Assume now that $\phi$ is a permissible $l$-ramification, written
in some coordinates $\zz$ as $\phi(\zz)=(z_1^l,z_2,...,z_n)$.
Consider a parametrization of $\G$ of the form
$\g(s)=(s,\g_2(s),...,\g_n(s))$ in these coordinates (recall that,
from the definition of permissible ramification, $\G$ is
non-singular). Then
\begin{equation}\label{eq:gamma-tilde-ramification}
\wt{\g}(s)=(s,\g_2(s^l),...,\g_n(s^l))\in\C[[s]]^n
\end{equation}
is a parametrization of a formal curve $\wt{\G}$ satisfying
$\phi^*\G\subset\wt{\G}$. Uniqueness of $\wt{\G}$ comes from the
property of $\G$ being non-singular: $\G$ is generated by the
series $z_j-\g_j(z_1)$ for $j=2,...,n$ and thus, if $\overline{\Gamma}$ is a formal curve such that
$\phi^*\G\subset\overline{\G}$, then $\wt{\gamma}(s)$ is a parametrization of $\overline{\Gamma}$.

On the other hand, being $Z=\{z_1=0\}$ invariant for $X$, if we
write $X=\sum_{i=1}^n a_i(\zz)\frac{\partial}{\partial z_i}$ then
we have $ a_1(\zz)=z_1\bar{a}_1(\zz)$, where $\bar{a}_1(\zz)$ is a
formal series. The (singular) formal vector field $\wt{X}$
defined by
$$
\wt{X}=\frac{z_1 \bar{a}_1(\phi(\zz))}{l}\frac{\partial}{\partial
z_1}+\sum_{i=2}^n a_i(\phi(\zz))\frac{\partial}{\partial z_i}
$$
satisfies $\phi_\ast\wt{X}=X$. Since $\G$ is invariant and not contained in the singular locus of $X$,  $X|_{\g(s)}=h(s)\g'(s)$ for some $h\in\C[[s]]$ with $\nu(h)\ge1$. 
We obtain that  $\wt \G$ is invariant for $\wt X$ and 
\[ \nu(\wt X|_{\wt\G}) = \nu(X|_\G) + (\nu(X|_\G) -1)(l-1) \geq \nu(X|_\G) \]
as a consequence of $\wt X|_{\wt\g(s)}=l^{-1}s^{1-l}h(s^l)\wt{\g}'(s)$.
\end{proof}

It is worth to notice that
Proposition~\ref{pro:permissible-X} remains true, except
for the uniqueness of the curve $\wt{\G}$ satisfying
$\phi^*\G\subset\wt{\G}$, if the condition of $\G$ being
non-singular in the definition of permissible ramification is
removed (consider, for example, the curve $\G=(y^2-x^3)$ at $(\C^2,0)$ and
$\phi(x,y)=(x^2,y)$
where we can choose $\tilde{\G} = (y-x^3)$ or
$\tilde{\G} = (y+x^3)$). 

\begin{remark}\label{rk:finite-jets}
Observe that the expression of the transform
of a vector field by a permissible transformation is
\emph{finitely determined} in the following sense. Let $\phi$ be a permissible
transformation for $(X,\G)$ with center $Z$. Then, for any $N\in\N$, there exists $N'\in\N$
such that, if $Y$ is another formal vector field for which $Z$ is invariant and $J_{N'}Y=J_{N'}X$ then $J_{N}\wt{Y}=J_{N}\wt{X}$, where $\wt{X},\wt{Y}$ are the transforms of $X,Y$ by $\phi$, respectively. Although we do not require $Y$ to have $\Gamma$ as an invariant curve, the transform $\wt{Y}$ is well defined in Proposition~\ref{pro:permissible-X} once we have that the center $Z$ of $\phi$ is invariant for $Y$.
\end{remark}

\subsection{Reduction of a vector field to Ramis-Sibuya form}\label{sec:reduction}
Let $X$ be a formal singular vector field at $(\C^n,0)$ and let $\G$ be a formal invariant
curve of $X$ not contained in the singular locus of $X$. In this section we show that the pair $(X,\G)$ can be reduced to Ramis-Sibuya form by permissible transformations.

A {\em sequence of permissible transformations for
$(X,\G)$} is a composition
$$
\Phi=\phi_l\circ\phi_{l-1}\circ\cdots\circ\phi_1:\Cd n\to\Cd n
$$
such that $\phi_1$ is a permissible transformation for $(X,\G)$
and, for $j=1,...,l-1$, $(X_j,\G_j)$ is the transform of
$(X_{j-1},\G_{j-1})$ by $\phi_j$ and $\phi_{j+1}$ is a permissible
transformation for $(X_j,\G_j)$. The last pair $(X_l,\G_l)$ will
be called the {\em transform} of $(X,\G)$ by $\Phi$.
We also define the {\em total divisor} of $\Phi$ as the set
$E_\Phi=(\phi_l\circ\phi_{l-1}\circ\cdots\circ\phi_2)^{-1}(E_{\phi_1})$,
which is a normal crossing divisor at $0\in\C^n$.

\begin{theorem}\label{th:X-RS-form}
Let $X$ be a formal singular vector field at $0\in\C^n$ and
let $\G$ be an invariant formal curve of $X$ not contained in the
singular locus of $X$. Then there exists a sequence $\Phi$ of
permissible transformations for $(X,\G)$ such that the transform of $(X,\G)$ by $\Phi$ is in Ramis-Sibuya form.
\end{theorem}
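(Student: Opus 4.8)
The plan is to follow the route announced in the introduction: first reduce $\Gamma$, by punctual blow-ups, to a non-singular curve transverse to an invariant hyperplane; then attach to $(X,\Gamma)$ a meromorphic system of $n-1$ linear-plus-perturbation ODEs; and finally invoke Turrittin's normal form theorem for the linear part of that system, realising each of Turrittin's operations — ramifications and meromorphic gauge transformations — as permissible transformations for $(X,\Gamma)$. Since in~(\ref{eq:X-RS-form}) the remainders $A$ and $B$ are completely free, once the low-order terms have been suitably prepared by blow-ups only \emph{finitely many} coefficients in $x$ remain to be put in normal form, and those can be treated with polynomial, hence analytic, changes of coordinates; this is what makes the formal character of Turrittin's theory compatible with the analytic nature of permissible diffeomorphisms.

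\textbf{Step 1: punctual blow-ups.} I would begin by resolving the singularities of the formal curve $\Gamma$ by a finite composition of blow-ups centered at points; these are permissible, since a point is a trivially invariant center and the strict transform of $\Gamma$ is transverse to the new exceptional divisor once $\Gamma$ is non-singular there, and by Proposition~\ref{pro:permissible-X} the transformed vector field stays singular with $\Gamma$ invariant, not contained in its singular locus, and with $\nu(X|_\Gamma)$ unchanged. After one more punctual blow-up and a linear change of coordinates we may assume we are in analytic coordinates $(x,\yy)$ in which $\Gamma$ is non-singular, tangent to the $x$-axis, and the exceptional divisor of the last blow-up is the invariant hypersurface $\{x=0\}$, so $X(x)\in(x)$. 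A further punctual blow-up replaces $X(x)=a(x,\yy)$ by $a(x,x\yy)$ in the chart adapted to $\Gamma$, raising the order in $x$ of every $\yy$-involving monomial by its $\yy$-degree while keeping $q+1:=\nu(X|_\Gamma)$ fixed; iterating, the coefficients of $x^{0},\dots,x^{2q+1}$ in $X(x)$ become constants, so $x^{q+1}\mid X(x)$ with leading coefficient $\lambda\in\C^{*}$. Finally, a change of the $x$-coordinate — which can be taken polynomial since only finitely many coefficients are involved — brings $X(x)$ to the shape $x^{q+1}(\lambda+bx^{\max(1,q)}+\cdots)$ of~(\ref{eq:X-RS-form}). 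At this stage $(X,\Gamma)$ carries an associated system of $n-1$ meromorphic ODEs $x^{p+1}\yy'=\overline A(x)\yy+(\text{terms of }\yy\text{-degree}\ge 2)+O(x^{p+1})$ of Poincar\'e rank $p\ge 0$.

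\textbf{Step 2: Turrittin's normal form, and absorbing the remainder.} By Turrittin's theorem (in the refined version used by Ramis and Sibuya), there is a ramification $x\mapsto x^{l}$ after which a formal gauge transformation $\yy=\widehat P(x)\zz$ with $\widehat P\in\mathrm{GL}_{n-1}(\C((x)))$ brings the linear part of the system to the principal-part structure of Definition~\ref{def:X-RS-form}: a diagonal polynomial matrix plus $x^{q}$ times a constant matrix commuting with it. I would realise these operations as permissible transformations: the ramification is a permissible $l$-ramification with center the invariant hypersurface $\{x=0\}$, transverse to the non-singular $\Gamma$, and it preserves the shape of $X(x)$ obtained in Step~1. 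Turrittin's normalization proceeds through ramifications, shearings and holomorphic gauge transformations; a shearing of a group of $\yy$-variables by $x$ is realised by the permissible blow-up with center $\{x=0\}\cap\{y_j=0:j\in S\}$ — an invariant center transverse to $\Gamma$, thanks to the divisibility conditions that Turrittin's ordered procedure guarantees at the moment it is applied — a scalar shearing of all $\yy$-variables is a further punctual blow-up, and the holomorphic gauge transformations, needed only up to finite order in $x$, are analytic changes $(x,\yy)\mapsto(x,P(x)\yy)$ with $P$ polynomial and $P(0)$ invertible. Once the linear part is normalized, the terms of $\yy$-degree $\ge 2$ are pushed into $x^{q+1}\C[[x,\yy]]^{n-1}$ by finitely many punctual blow-ups (each raises their order in $x$ while preserving the normalized shape of the linear part), and the inhomogeneous terms are pushed into $x^{q+1}\C[[x,\yy]]^{n-1}$ by an analytic diffeomorphism that translates $\yy$ by a sufficiently long jet of a parametrization of $\Gamma$; this leaves the linear part and $X(x)$ untouched, and the invariance of $\Gamma$ forces the new inhomogeneous part to have the required order. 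Composing all these steps yields a sequence $\Phi$ of permissible transformations carrying $(X,\Gamma)$ to Ramis-Sibuya form.

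\textbf{Main obstacle.} In my view the difficulty lies not in Turrittin's theorem itself but in the \emph{coordination and termination} of the geometric realisation: one must translate the meromorphic gauge transformations of the linear ODE theory into blow-ups and changes of coordinates, verifying at each application that the blow-up center is non-singular, invariant for the \emph{current} vector field, and transverse to the current strict transform of $\Gamma$ — precisely the divisibility conditions produced along Turrittin's procedure — and one must arrange the normalization of $X(x)$, the ramification, the shearings, the blow-ups that kill the higher-order-in-$\yy$ terms, and the $\yy$-translation that kills the inhomogeneous terms in an order in which no step undoes a previous one, all within a finite sequence.
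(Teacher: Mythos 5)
Your overall skeleton (resolve $\G$, prepare $X(x)$ by punctual blow-ups, associate a meromorphic system, invoke Turrittin, realise ramifications, shearings and polynomial gauges as permissible transformations) is the same as the paper's, but the proposal has a genuine gap exactly at the point you yourself flag as the ``main obstacle'', and it is not a mere bookkeeping issue. Turrittin's procedure, and in particular Remark~\ref{rk:shearing} on the non-increase of the Poincar\'e rank, controls only the \emph{linear} part of the system. When a shearing $w_j\mapsto x^{k_j}w_j$ is realised as a blow-up and applied to the actual vector field in the prepared form (\ref{eq:systemEDOs-1}), the inhomogeneous term $c(x)$ (which is nonzero because $\G$ is only formal and cannot be flattened by an analytic, hence permissible, change) and the nonlinear term $x^{r}\Theta(x,\yy)$ get divided by powers $x^{k_j}$ that Turrittin's algorithm does not bound in terms of $r$; so the transform may acquire poles and the step is no longer permissible, regardless of the divisibility conditions satisfied by the matrix entries. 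To prevent this you must first translate $\yy$ by a long jet of $\bar\g$ and perform many punctual blow-ups, but \emph{how} long and \emph{how} many depends on the shearing exponents of the very composition $\Psi$ you are trying to justify, while those preparations in turn modify the linear part (each punctual blow-up shifts the constant matrix $C$ by a multiple of the identity, and the jet translation perturbs $A(x)$). Your plan, which runs Turrittin first on the actual field and defers the jet translation to the very end, does not break this circularity.

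The paper resolves it with an idea absent from your proposal: Turrittin is applied not to the actual field but to an auxiliary \emph{linear} vector field $Y$ built from the linearization of $X$ along $\G$ (system (\ref{eq:linearY})), for which the resulting composition $\Psi$ of T-transformations is shown to be permissible; then, for each $m$, the analytic jet translation $\phi_m(x,\yy)=(x,\yy-J_{2m}\bar\g(x))$ followed by the $m$ punctual blow-ups $\psi_m(x,\yy)=(x,x^m\yy)$ makes the transformed field, after adding the radial correction $mW$, converge to $Y$ in the $(x)$-adic topology; continuity of $Z\mapsto\Psi^*Z$ in that topology then shows that for $m\gg1$ (chosen \emph{after} $\Psi$ is fixed) the map $\Psi\circ\psi_m\circ\phi_m$ is permissible for $(X,\G)$ and yields pre-RS-form, with principal matrix $x^\nu\overline D(x)+x^{p+\nu}(\overline C-mcI_{n-1})$, and one still has to check that this shifted matrix satisfies Definition~\ref{def:X-RS-form} for $m\gg1$. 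Your claims that the blow-up centres are invariant ``thanks to the divisibility conditions that Turrittin's ordered procedure guarantees'', that the punctual blow-ups ``preserve the normalized shape of the linear part'' (they change $C$ by multiples of the identity, so conditions (ii)--(iii) must be rechecked), and that the final translation ``leaves the linear part untouched'' all paper over precisely the interaction between the formal tail of $\G$, the nonlinear terms and the shearings that this approximation-and-continuity argument is designed to handle; without it, or some substitute, the proof does not close.
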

A composition $\Phi$ as in the statement will be called a {\em reduction} of $(X,\G)$ to RS-form.

\strut

Theorem~\ref{th:X-RS-form} is not a completely new result  
in the theory of reduction of singularities of
vector fields or in the theory of systems of meromorphic
ODEs with irregular singularity. It contains in particular a result of
``local uniformization'' of $X$ (i.e. reduction to
non-nilpotent linear part) along the valuation corresponding to $\G$ (see Cano et al \cite{Can-M-R,Can-Roc-Spi} or Panazzolo \cite{Pan} for more information). The particular expression of a vector field in RS-form, that requires more than a non-nilpotent linear part, is obtained, once we associate to $X$ a system of $n-1$
meromorphic ODEs after some initial blow-ups, from classical
results in the theory of ODEs, generically known as {\em Turrittin's Theorem}: see Turrittin \cite{Tur}, Wasov \cite{Was}, Balser \cite{Bal} or Barkatou \cite{Bar} (for
linear systems with formal coefficients), and Cano et al. \cite{Can-Mou-San2} (for related
statements for three-dimensional real vector fields). Since we
could not find a statement with the precise terms of
Theorem~\ref{th:X-RS-form} needed for our
purposes, we
devote the rest of this section to provide a self-contained proof.

\strut

Let us describe the situation after a punctual blow-up. Let
$(x,\yy)$ be coordinates such that $\{x=0\}$ is transversal to (the tangent line of) $\G$ and let $\phi:\Cd
n\to\Cd n$ be the blow-up of $\C^n$ centered at the origin (which is permissible for $(X,\G)$).
There is a constant vector $\xi\in\C^{n-1}$ so that $\phi$ is
written as 
\begin{equation}\label{eq:point-blow-up-coordinates}
\phi(x,\yy)=(x,x(\yy-\xi)).
\end{equation}
We obtain the following properties:

{\bf (a1)} The transform of $X$ by $\phi$ is written as
$\wt{X}=x^{\nu(X)-1}X'$ where
$E_\phi=\{x=0\}$ and $X'$ is a formal singular vector field. Thus $\nu(\wt{X})\geq\nu(X)$ and the origin is again a
permissible center for the transform $(\wt{X},\wt{\G})$ of
$(X,\G)$.

{\bf (a2)} When $\xi=0$, the exponent of $x$ increases at least a unit in any
monomial of the coefficient $\wt{X}(x)$ with positive degree in the $\yy$-variables
and in any monomial of the components of $\wt{X}(\yy)$ with
degree at least two in the $\yy$-variables, whereas the order of $\wt{X}(\yy)(x,0)$ decreases in a unit.

\subsection*{From pre-RS form to RS form} 
To prove Theorem~\ref{th:X-RS-form}, it will be sufficient to prove that there exists a sequence $\Psi$ of permissible transformations for $(X,\G)$ such that the transform $\wt{X}=\Psi^*X$ is written in some coordinates $(x,\yy)$ as
\begin{equation}\label{eq:pre-RS-form}
\wt{X}=x^{q+1}u(x,\yy)\frac{\partial}{\partial x}
+\left(B_0(x)+(D(x)+x^qC)\yy+O(x^{q+1}\yy)\right)
\frac{\partial}{\partial\yy},
\end{equation}
where $u(0,0)\neq 0$, $B_0(0)=0$ and the transformed curve $\wt{\G}=\Psi^*\G$, together with $q,D(x),C$ satisfy the conditions of Definition~\ref{def:X-RS-form}. 

Let us see how we can obtain RS-form from expression \eqref{eq:pre-RS-form}. Analogously as in Remark~\ref{rk:RS-form}(b), if $\g(x)=(x,\bar{\g}(x))$ is a parametrization of $\wt{\G}$ and $\nu(\bar{\g}(x))\ge l$ then $\nu(B_0(x))\ge l$. Thus, by a change of variables of the form $\yy=\tilde{\yy}+J_{2q+2}\bar{\g}(x)$ we may assume that $\nu(B_0(x))\ge 2q+3$ and the first $2q+2$ iterated tangents of $\wt{\G}$ and of $\{\yy=0\}$ coincide. Taking into account the properties (a1) and (a2) above about the effect of a permissible punctual blow-up, we have that the composition $\Phi$ of the blow-ups at the first $q+1$ iterated tangents of $\wt{\G}$ is written as $\Phi(x,\yy)=(x,x^{q+1}\yy)$ and the transform $\Phi^*\wt{X}$ is written as in (\ref{eq:pre-RS-form}) with the extra hypothesis
$$
u(x,\yy)=u(x,0)+O(x^{q+1}\yy),\;\;\nu(B_0(x))\ge q+2.
$$
Notice that the matrix $D(x)+x^qC$ has changed into $D(x)+x^q(C-(q+1)u(0,0)I_{n-1})$. If this matrix vanishes, we 
consider $\Phi(x,\yy)=(x,x^{q+2}\yy)$ and $\Phi^{*} \tilde{X}$ is in the form (\ref{eq:pre-RS-form}) in which 
$D(x)+x^qC$ changes into $- x^q u(0,0) I_{n-1}$ and $\nu(B_0(x))\ge q+1$.
It remains to show that we can obtain
$u(x,0)=u(0,0)+ b x^{\max (1,q)} + O(x^{q+1})$. The series $u(x,0)$ is already in the required form
for $q=0$.
For the case $q\ge 1$, it suffices to consider a polynomial change of coordinates of the form $x=x+P(x)$, with
$P(x)=a_2x^2+\cdots+a_{q}x^{q}$.
This is consequence of a classical result for one-dimensional vector fields: if $Y=x^{q+1}v(x)\partial_x$ is a vector field with $v(x)=v_0+v_1x+\cdots$, $v_0\ne 0$ and $q\ge 1$, we can kill all coefficients $v_1,...,v_{q-1}$ with a polynomial change of variables, tangent to identity and of degree at most $q$.

A pair $(\wt{X},\wt{\G})$ in the form \eqref{eq:pre-RS-form} will be called a pair in {\em pre-RS-form}. In the rest of this section, we prove, to finish Theorem~\ref{th:X-RS-form}, that any pair $(X,\G)$ can be reduced to pre-RS-form by means of a finite composition of permissible transformations.

\subsection*{Reduction to pre-RS form} 
First, performing the blow-ups at the infinitely
near points of $\G$ and by resolution of singularities of
curves (see \cite{Wal}), we can assume that $\G$ is non-singular. Moreover, using property (a1) above, there is
a system of  coordinates $(x,\yy)$ for which $\G$ is transversal to $\{x=0\}$ and such that
$X=x^{e}\bar{X}$, where $\bar{X}$ is not divisible by $x$ and
$e\geq\nu(X)-1$ (in particular $\{x=0\}$ is contained in the singular locus of
$X$ if $\nu(X)\ge 2$). Let $\g(x)=(x,\bar{\g}(x))\in\C[[x]]^n$ be a parametrization
of $\G$ in these coordinates and write
$$
\bar{X}=a(x,\yy)\frac{\partial}{\partial x}+
 {\bf b}(x,\yy)\frac{\partial}{\partial\yy},
$$
where $a(x,\yy)\in\C[[x,\yy]]$ and ${\bf b}(x,\yy)\in\C[[x,\yy]]^{n-1}$. Since $\G$ is invariant and not contained in the singular locus of $X$ we obtain that
 $a(\g(x))\neq 0$.

\subsubsection*{Case $\bar{X}$ not singular}
We analyze first the case where $\bar{X}$ is not singular at the origin. In this case, we have $e\ge 1$ and, since $\G$ is the unique formal solution of $\bar{X}$ at $0$ and it
is transversal to $\{x=0\}$, we must have $a(0)=\lambda\neq 0$. We may assume also that $\G$ is tangent to $\{\yy=0\}$. After
a new blow-up at the origin, and taking coordinates as in
(\ref{eq:point-blow-up-coordinates}), the transform of $X$ is
written as
$$
\wt{X}=x^{e-1}\left[x\left(\lambda+O(x)\right)
\frac{\partial}{\partial x}
+\left({-\lambda} \yy+O(x)\right)\frac{\partial}{\partial\yy}\right],
$$
which is in pre-RS-form \eqref{eq:pre-RS-form} with $q=e-1\ge 0$ and Poincar\'{e} rank $p=0$.

\subsubsection*{Case $\bar{X}$ singular}
Assume now that $\bar{X}$ is a singular formal vector field. Let $r$ be
the order of vector field $\overline{X}|_\G$, it is equal to the order of
the series $a(\g(x))$. Notice that $1\leq r<\infty$. As in Remark~\ref{rk:RS-form}(b), we can assume, up to a polynomial change of variables of the form 
$\phi_1 (x, \yy) = (x, \yy +J_N\bar{\g}(x))$,  
 that the order in $x$ of any component of $b(x,0)$ is at least $2r+2$.
Let $\phi$ be the composition of the permissible blow-ups with center at the first
$r+1$ iterated tangents of $\G$,
written as $\phi(x,\yy)=(x,x^{r+1}\yy)$. Taking into account the effect, stated in property (a2), of a blow-up in the order with respect to $x$ of the different monomials of the coefficients of $X$
and the invariance of $\overline{X}|_\G$ under blow-ups,
we conclude that, after the transformation $\phi$, the vector field $X$ may be written as
\begin{equation}\label{eq:systemEDOs-1}
X=x^e\left[x^{r}u(x,\yy)\frac{\partial}{\partial
x}+(c(x)+A(x)\yy+
x^{r}\Theta(x,\yy))\frac{\partial}{\partial\yy}\right]
\end{equation}
where  $e\geq 0, r\geq 1$, $u(0,0)\neq 0$, $\nu(c(x))\geq r+1$, $A(x)\in\mathcal{M}_{n-1}(\C[[x]])$ and
$\Theta\in\C[[x,\yy]]^{n-1}$ has order at least 2 in the
$\yy$-variables.  Moreover, we may assume also that $A(0)\neq 0$: if $\nu(A(x))\ge r$ then $x^{-(e+r)}X$ is non-singular, a case already treated above; otherwise, if $\nu(A(x))<r$ we may rewrite $X$ as in (\ref{eq:systemEDOs-1}) replacing $e$ by $e+\nu(A(x))$ and $r$ by $r-\nu(A(x))$ so that the new matrix $A(x)$ satisfies $A(0)\ne 0$.

Put $r=s+1$ with $s\ge 0$. Notice that if $s=0$ then $X$ is already in the required pre-RS-form
(\ref{eq:pre-RS-form}) with $q=e$ and Poincar\'{e} rank $p=0$. We assume that $s\geq 1$. To
the vector field $X$ in (\ref{eq:systemEDOs-1}) we can associate the system of $n-1$ formal meromorphic ODEs
\begin{equation}\label{eq:systemEDOs-2}
x^{s+1}\yy'=u(x,\yy)^{-1}\left(c(x)+A(x)\yy+ x^r\Theta(x,\yy)
\right).
\end{equation}
We will use the following
classical result, that we state more or less as it appears in the book of Wasov \cite{Was}.
\begin{theorem}[Turrittin]\label{th:turritin}
Consider an $m$-dimensional system of formal linear ODEs $$x^{s+1}\ww'=\Lambda(x)\ww,\;\;\; \Lambda(x)\in\mathcal{M}_{m}(\C[[x]]),$$  and assume that $s\ge 1$ and $\Lambda(0)\ne 0$. Then, after a finite number of
transformations of the following types
\begin{enumerate}[$\bullet$]
\item Polynomial regular transformation
  $$
 L_{P(x)}(x,\ww)=(x,P(x)\ww),\;
 P(x)\in\mc{M}_{m}(\C[x])\mbox{ with }P(0)\mbox{ invertible}.
$$
  \item Shearing transformation
  $$
 S_{(k_1,...,k_m)}(x,\ww)=(x,\diag(x^{k_1},...,x^{k_m})\ww),\;k_j\in\N_{\geq 0}.
  $$
  \item Ramification
  $$
R_\alpha(x,\ww)=(x^\alpha,\ww),\; \alpha\in\N_{>0}.
  $$
\end{enumerate}
the system transforms into a system
$$x^{p+1}\ww'=(\overline{D}(x)+x^p\overline{C}+O(x^{p+1}))\ww,
$$ where either $p=0$ and $\overline{C}\neq0$ or $p\geq 1$, $\overline{D}(x)$ is a diagonal matrix of
polynomials
      of degree at most $p-1$ commuting with $\overline{C}$ and $\overline{D}(0)\neq 0$.
\end{theorem}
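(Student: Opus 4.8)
The plan is to prove this as the classical formal Hukuhara--Turrittin reduction of a meromorphic linear system, carried out by the three prescribed elementary operations and organized as an induction. The induction will run on $(m,s)$, the dimension and the Poincar\'{e} rank, ordered lexicographically, or, more robustly, on an integer invariant of the Newton polygon of $\det(\xi\Id-x^{-s-1}\Lambda(x))$; the two coincide in the unramified situations. A preliminary remark makes the three listed operations sufficient even though the normalizing changes of variables that arise naturally are power series rather than polynomials: the conclusion constrains only the coefficients of $\Lambda(x)$ of degree $\le p$, so it is enough to reach the desired shape modulo $x^{N}$ for every $N$, and then any formal gauge transformation $T(x)\in\mathrm{GL}_m(\C[[x]])$ used below may be replaced by the polynomial transformation $L_{P(x)}$ with $P$ a high-order truncation of $T$, the truncation error being absorbed into the tail $O(x^{p+1})$.

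\textbf{Splitting.} If the leading matrix $\Lambda(0)$ has at least two distinct eigenvalues, let $\C^m=V_1\oplus\cdots\oplus V_r$ be its decomposition into generalized eigenspaces and seek $T(x)\in\mathrm{GL}_m(\C[[x]])$ with $T(0)=\Id$ making $T^{-1}\Lambda T-x^{s+1}T^{-1}T'$ block diagonal for this splitting. Solving this order by order reduces at each step to a Sylvester equation whose obstruction operator is governed by $M\mapsto\Lambda(0)M-M\Lambda(0)$ on off-diagonal blocks and is invertible precisely because the spectra of the diagonal blocks of $\Lambda(0)$ are pairwise disjoint, so $T$ exists. I then apply the inductive hypothesis to each diagonal block, which has strictly smaller dimension; after passing to a common ramification order (the lcm of those used inside the blocks) and rewriting every block with the common nominal Poincar\'{e} rank $p=\max_j p_j$ (this leaves the solutions unchanged and merely multiplies each block's $\overline{D}^{(j)}$ by a power of $x$, preserving diagonality and the degree bound), the block-diagonal recombination has the asserted form: $\overline{D}=\diag(\overline{D}^{(j)})$ is diagonal of degree $\le p-1$, $\overline{C}=\diag(\overline{C}^{(j)})$ commutes with it, and $\overline{D}(0)\ne0$ (or $p=0$ and $\overline{C}\ne0$) since this holds blockwise.

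\textbf{Balancing when the leading matrix is not scalar.} If $\Lambda(0)=\lambda\Id+N_0$ with $N_0\ne0$ nilpotent, the system is not yet balanced: the Newton polygon of $\det(\xi\Id-x^{-s-1}\Lambda(x))$ has an edge of slope $\sigma$ with $0<\sigma\le s+1$ that is in general non-integral. Writing $\sigma=\beta/\alpha$ in lowest terms, I would first ramify $x=t^{\alpha}$ and then apply the shearing $S_{(k_1,\dots,k_m)}$ whose exponents are read off from the vertices of that edge, i.e.\ a weighted blow-up in the $\ww$-variables adapted to the slope $\sigma$. A standard Newton-polygon computation shows that, after re-extracting the common power of $t$, the new leading matrix either has at least two distinct eigenvalues---so that the splitting step above applies and the recursion continues on strictly smaller blocks---or the Poincar\'{e} rank has dropped strictly; in either case the complexity invariant decreases. \emph{This balancing step, together with the verification that the Newton-polygon invariant strictly drops at each ramification--shearing so that the algorithm terminates, is the main obstacle of the argument}: it is the genuinely combinatorial core of Turrittin's theorem, where one must handle rational slopes and the competition between lowering the rank and lowering the dimension.

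\textbf{Scalar leading matrix and conclusion.} Finally, if $\Lambda(0)=\lambda\Id$ with $\lambda\ne0$, then $\lambda\Id$ is simply the constant term of $\overline{D}(x)$ and nothing need be removed (this is why the normal form retains $\overline{D}(0)\ne0$ rather than a smaller rank): I pass to the first coefficient $\Lambda_k$, $k\ge1$, of $\Lambda(x)$ that is not scalar. If $k\ge p$, a constant conjugation simultaneously diagonalizing the coefficients $\Lambda_0,\dots,\Lambda_{p-1}$---which are pairwise commuting, being produced inductively by the splitting steps---puts the system in the asserted form with $\overline{D}(x)=\Lambda_0+\dots+\Lambda_{p-1}x^{p-1}$ and $\overline{C}=\Lambda_p$. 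If $k<p$, then $\Lambda_k$ either has two distinct eigenvalues, and a shearing weighted by $k$ separates them, returning us to the splitting step, or $\Lambda_k$ is non-diagonalizable, in which case a constant conjugation to Jordan form followed by a shearing resolves its nilpotent part exactly as in the previous paragraph. Each such move strictly lowers the dimension of some block or strictly raises the index $k$ of the first non-normalized coefficient while keeping $k\le p$, so the process terminates; collecting the outputs of all branches yields the system in the stated form and completes the induction.
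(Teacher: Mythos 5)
Your proposal is a sketch of the classical Hukuhara--Turrittin reduction, but it is not a proof: you yourself flag the balancing step (the case $\Lambda(0)=\lambda\Id+N_0$ with $N_0\neq 0$ nilpotent, where one must choose a ramification $x=t^{\alpha}$ and a shearing from the Newton polygon and then verify that either the new leading matrix has two distinct eigenvalues or the Poincar\'e rank drops, together with the proof that the chosen invariant strictly decreases so the recursion terminates) as ``the main obstacle of the argument'' and leave it unproved. That step \emph{is} Turrittin's theorem; the splitting lemma via the Sylvester-equation argument and the scalar-leading-term bookkeeping are the routine parts, and the same nilpotent difficulty reappears, undischarged, in your final paragraph (``a shearing resolves its nilpotent part exactly as in the previous paragraph''). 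So the central claim is asserted rather than established, and the induction does not close. There are also smaller loose ends in the reduction to blocks: after splitting, the block corresponding to a semisimple zero eigenvalue of $\Lambda(0)$ has vanishing leading coefficient, so the inductive hypothesis ($\Lambda(0)\neq 0$, $s\geq 1$) need not apply to it after dividing out powers of $x$ (the block may become regular singular or even nonsingular), and the recombination with a common nominal rank $p=\max_j p_j$ needs a word on why the merged system still satisfies the dichotomy ``$p=0$ and $\overline{C}\neq 0$, or $p\geq 1$ and $\overline{D}(0)\neq 0$''; likewise the claim that $\Lambda_0,\dots,\Lambda_{p-1}$ are pairwise commuting ``being produced inductively by the splitting steps'' is not justified as stated.

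For comparison: the paper does not prove this statement at all. It is quoted as a classical result, with references to Turrittin, Wasow, Balser and Barkatou, and the only point the authors extract from the published proofs is the observation (their Remark on shearings) that the shearing transformations in Turrittin's process never increase the Poincar\'e rank. So you attempted more than the paper does; but as a standalone argument your text is an outline of the standard strategy with the genuinely hard combinatorial core --- the ramification--shearing analysis of the nilpotent case and the termination argument --- left as a gap. Either carry out that step (e.g.\ following Wasow \S 19 or Barkatou's algorithmic proof, with a precise invariant such as the Newton polygon/irregularity that strictly drops), or do as the paper does and cite the literature.
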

Polynomial regular transformations, shearing transformations and
ramifications, as defined in Turrittin's Theorem, will be called
{\em T-transformations}. Except for the ramifications, they are particular examples of {\em gauge} transformations of the system.

\begin{remark}\label{rk:shearing}
Note that a polynomial regular transformation does not change the number $s$ (the
{\em Poincar\'{e} rank} of the system) and that a ramification $R_\alpha$ multiplies
it by $\alpha$. The effect of a shearing transformation
$S_{(k_1,...,k_m)}$   on the Poincar\'{e} rank depends on the
parameters $k_1,...,k_m$ (and on the orders of the entries of the system). Looking carefully at the proof of
Theorem~\ref{th:turritin} (see for example \cite[section 19]{Was} or, alternatively, the proof in \cite{Bar}), we may observe that each shearing transformation in the process to prove Theorem~\ref{th:turritin} is chosen
so that its application never makes the Poincar\'{e} rank
increase strictly.
\end{remark}

We resume the proof of
Theorem~\ref{th:X-RS-form}. Assume that $X$ is
written as in (\ref{eq:systemEDOs-1}), that $\G$ is non-singular and transversal to $x=0$ and let $\g(s)=(s,\bar{\g}(s))$ be a parametrization of $\G$. Consider the formal change
of variables $ \yy=\hat{\yy}+\bar{\g}(x)$, for which
$\G=\{\hat{\yy}=0\}$, and write $X$ in the variables
$(x,\hat{\yy})$ as
$$x^{e}\left[x^{s+1}u(x,\hat{\yy}+\bar{\g}(x))\frac{\partial}{\partial
x}+(\hat{A}(x)\hat{\yy}+
x^{s+1}\hat{\Theta}(x,\hat{\yy}))\frac{\partial}{\partial\hat{\yy}}\right]$$
where $\hat{A}(0)\neq 0$ and
$\hat{\Theta}(x,\hat{\yy})=O(\|\hat{\yy}\|^2)$. The system
(\ref{eq:systemEDOs-2}) associated to the vector field $X$ becomes
$$x^{s+1}\hat{\yy}'=u(x,\hat{\yy}+\bar{\g}(x))^{-1}\left(\hat{A}(x)\hat{\yy}+
x^{s+1}\hat{\Theta}(x,\hat{\yy})\right).
$$
Apply Theorem~\ref{th:turritin} to the linear system
\begin{equation}\label{eq:linearY}
x^{s+1}\ww'=u(x,\bar{\g}(x))^{-1}\hat{A}(x)\ww,
\end{equation}
associated to the formal vector field
 $$Y=x^e\left[x^{s+1}u(x,\bar{\g}(x))\frac{\partial}{\partial
x}+\hat{A}(x)\ww\frac{\partial}{\partial\ww}\right].
$$
We get a composition $\Psi$ of $T$-transformations converting \eqref{eq:linearY} into a system with the prescribed properties stated in Theorem~\ref{th:turritin}. In terms of the associated vector field $Y$, if we write $\Psi(x,\ww)=(x^{\beta},\Psi_2(x,\ww))$, where $\beta$ is the product of the orders of the ramifications involved in the process and $\Psi_2$ is polynomial in $x$ and linear in $\ww$, we get  
\begin{equation}\label{eq:turrittin-to-linear}
\Psi^*Y=\beta^{-1}u(x^{\beta},\bar{\g}(x^{\beta})) x^{\nu}\left[x^{p+1}  \frac{\partial}{\partial
x}+\left(\overline{D}(x)+x^p\overline{C}+O(x^{p+1})\right)\ww\frac{\partial}{\partial\ww}\right]
\end{equation}
where $\nu\geq 0$ and $p$, $\overline{D}(x)$ and $\overline{C}$ satisfy the properties stated
in Theorem~\ref{th:turritin}. In fact, we have $p+\nu=\beta(e+s)$.

\begin{lemma}
$\Psi$ is  a sequence of
permissible transformations for $(Y,\{\ww=0\})$.
\end{lemma}
\begin{proof}
This is clear for
polynomial regular transformations and for ramifications. On the
other hand, a shearing transformation can be viewed as a
composition of blow-ups. More
precisely, consider $\phi=S_{(k_2,...,k_{n})}$ where $k_j >0$ if $2 \leq j \leq t$ and $k_j=0$
otherwise. The expression of the shearing transformation is
$\phi (x, \ww)=(x, x^{k_2} w_2, \hdots, x^{k_t} w_t, w_{t+1}, \hdots, w_n)$.  
Put
\[ \overline{Y}=x^{-e}Y =  {a}_1(x,\ww)\frac{\partial}{\partial x} +
\sum_{i=2}^n {a}_i(x, \ww)\frac{\partial}{\partial w_i}. \]
We obtain $\phi^{*} \overline{Y}= \tilde{a}_1(x,\ww)\frac{\partial}{\partial x} +
\sum_{i=2}^n
\tilde{a}_i(x, \ww)\frac{\partial}{\partial w_i}$ defined by
\[
\left\{
  \begin{array}{ll}
    \tilde{a}_j(x,\ww)=\dfrac{a_j(\phi(x, \ww))-k_j x^{k_j -1} w_j a_1(\phi(x,\ww))}{x^{k_j}}, &
 \hbox{for j=2,...,t;} \\[7pt]
    \tilde{a}_j(x,\ww)=a_j(\phi(x,\ww)), & \hbox{for $j \in \{1,t+1, \hdots, n\}$.}
  \end{array}
\right.
\]
Since the Poincar\'e rank does not increase by  the shearing transformations in Turrittin's process (see Remark~\ref{rk:shearing}), the pull-back $\phi^*\overline{Y}$ has coefficients in $\C[[x,\ww]]$ (with no poles). We deduce
that $a_1, \hdots, a_t$ belong to the ideal $(x,w_2, \hdots, w_t)$.
Therefore $Z=\{x=w_2=\hdots=w_t=0\}$ is invariant by $\overline{Y}$ and the
blow-up of $Z$ is a permissible transformation.
Then we consider the blow-up of $\{x=0\} \cap \cap_{k_j  \geq 2} \{ w_j=0\}$. Analogously as
above, it is a permissible transformation. By repeating this process with centers of
the form $\{x=0\} \cap \cap_{k_j  \geq  d} \{ w_j=0\}$ for
$1 \leq d \leq \max(k_1, \hdots, k_t)$,
we get that any shearing transformation  is a sequence of permissible transformations
for $(\overline{Y},\{\ww=0\})$, and hence for $(Y,\{\ww=0\})$.
\end{proof}

Notice that the pair $(\Psi^*Y,\{\ww=0\})$, where $\Psi^*Y$ is given in \eqref{eq:turrittin-to-linear}, is in pre-RS-form.
Let us show how to reduce $(X,\G)$ to pre-RS-form from this property. For any $m\ge 1$, consider the diffeomorphism $\phi_m:(\C^n,0)\to(\C^n,0)$
 given by $\phi_m(x,\yy)=(x,\yy-J_{2m} \bar{\g}(x))$.  
 The transform $X_m=\phi_m^*X$ is written as
$$
X_m=x^{e}\left[x^{s+1}u(x,\yy+J_{2m} \bar{\g}(x))\frac{\partial}{\partial
x}+\left(c_m(x)+A_m(x)\yy+
x^{s+1}\Theta_m(x,\yy)\right)\frac{\partial}{\partial\yy}\right],
$$
where $A_m(0)\ne 0$, $\Theta_m(x,\yy)=O(\|\yy\|^2)$ and $c_m(x),A_m(x),\Theta_m(x,\yy)$ converge respectively to $0,\hat{A}(x),
\hat\Theta(x,\yy)$ in the Krull topology associated to the
ideal $(x)$ (also called the $(x)$-adic topology) when $m \to \infty$.
Moreover, the transform $\G_m=\phi_m^*\G$ has a parametrization
$(x, \bar{\g} (x) - J_{2m} \bar{\g} (x))$ that converges to
$(x, 0)$ in the $(x)$-adic topology. Therefore, we get $\nu(c_m(x))\ge 2m+1$
(see Remark~\ref{rk:RS-form}).
Consider the map $\psi_m(x,\yy)=(x,x^m \yy)$,  
a composition of punctual permissible
blow-ups for $(X_m,\G_m)$. Let $(X_{m}',\G_{m}')$ be
the transform of $(X,\G)$ by $\psi_m \circ\phi_m$.

Taking into account formula (\ref{eq:X-after-blowup}) and property (a2) above, we get that
the limit of $X_{m}''=X_{m}'  + m W$,
where $W= x^{e+s}u(x,\bar{\g}(x)) \yy\frac{\partial}{\partial\yy}$, in
the $(x)$-adic topology is equal to $Y$ when $m \to \infty$.
Moreover, the parametrization
$(x, (\bar{\g}(x) - J_{2m} \bar{\g}(x))/x^{m})$ of
$(\psi_m \circ \phi_m)^{*} \Gamma$ converges to $(x,0)$ in the $(x)$-adic topology
when $m \to \infty$.
It is straightforward to check out that, since $\Psi$ is a sequence of permissible transformations
for $(Y,\{\ww=0\})$, there exists a neighborhood $U$ of  $Y$ in the
$(x)$-adic topology such that  $ \Psi^{*} Z$  is a formal vector field for any $Z \in U$ and
the map $Z \mapsto \Psi^{*} Z$ is continuous in $U$ where we consider the
 $(x)$-adic topology in both the source and the target.

In order to finish the proof of
Theorem~\ref{th:X-RS-form}, it suffices to
prove that if $m$ is sufficiently big then $\Psi$ reduces $(X_{m}',\G_{m}')$ to pre-RS-form:
the map
$\Psi\circ\psi_m \circ\phi_m$ will then reduce $(X,\G)$
to pre-RS-form.
Since $\lim_{m \to \infty}  X_{m}'' = Y$, $\Psi^{*}$ is continuous at $Y$ and
$\Psi^{*} (W) = \tau (x) \ww\frac{\partial}{\partial\ww}$
for   $\tau (x)=x^{\beta(e+s)} u(x^{\beta}, \bar{\g}(x^{\beta}))$,
we deduce that $\Psi$ is a permissible transformation for
$(X_m', \G_{m}')$. The continuity of $\Psi^{*}$ at $Y$ implies that
the pair $(\Psi^{*} X_{m}', \Psi^{*} \G_{m}')$ is in pre-RS-form
where the matrix $x^{\nu} (\overline{D}(x)+x^p\overline{C})$
in equation (\ref{eq:turrittin-to-linear}) is replaced by 
\[ x^{\nu} \overline{D}(x)  +
x^{p+ \nu} (\overline{C} - m c I_{n-1}) \]
where  $J_{p+ \nu} \tau (x)= c x^{p+\nu}$ (recall that $\tau(x)$ has order $\beta(e+s)=p+\nu$).  Indeed,
the above matrix satisfies the conditions in Definition \ref{def:X-RS-form} for $m >>1$.
Theorem~\ref{th:X-RS-form} is finished.

\subsection{Reduction of a biholomorphism to Ramis-Sibuya form}\label{sec:RS-diffeos}

Consider a biholomorphism $F\in\Diff\Cd n$ having a formal invariant curve $\G$.
In this section we use Theorem~\ref{th:X-RS-form} and the results in Section~\ref{sec:infinitesimal} to obtain, up to iteration of $F$, a reduction of the pair $(F,\G)$ to a form analogous to the Ramis-Sibuya form in Definition~\ref{def:X-RS-form}.

First, we need to adapt Proposition~\ref{pro:permissible-X} to the context of flows.

%
\begin{proposition}\label{pro:permissible-F}
Consider $F\in\Diff\Cd n$ with a formal invariant curve $\G$ and assume that there exists a formal vector field $X\in\cvf\Cd n$ such that $F=\Exp X$ and $\G$ is invariant for $X$. Let $\phi:\Cd n\to\Cd n$ be a permissible transformation for
$(X,\G)$ and let $(\wt{X},\wt{\G})$ be the transform of $(X,\G)$ by $\phi$. Then the diffeomorphism $\wt{F}=\Exp\wt X$ is analytic,
satisfies
  $\phi\circ\wt{F}=F\circ\phi$ and has $\wt{\G}$ as invariant curve.
We say that $\phi$ is a {\em permissible transformation} for $(F,\G)$ and that the pair $(\wt{F},\wt{\G})$ is the {\em transform} of
$(F,\G)$ by $\phi$.
\end{proposition}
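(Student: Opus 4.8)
The plan is to deduce everything from Proposition~\ref{pro:permissible-X} together with the fact that the time‑$1$ flow operation commutes with pushing forward vector fields under local diffeomorphisms. First I would note that $\phi$ is a local biholomorphism on the complement of its exceptional divisor $E_\phi$, and there $\wt X=\phi^*X$ is the genuine pull‑back of a formal vector field; since the flow is a natural construction, on that open set we have $\Exp\wt X=\phi^{-1}\circ\Exp X\circ\phi=\phi^{-1}\circ F\circ\phi$ as formal (indeed convergent, away from $E_\phi$) maps. This already gives the relation $\phi\circ\wt F=F\circ\phi$ at the level of formal maps, and it shows $\wt F$ agrees with the honest biholomorphism $\phi^{-1}\circ F\circ\phi$ off $E_\phi$. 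The only subtlety is at points of $E_\phi$: one must check that $\wt F=\Exp\wt X$, a priori just a formal diffeomorphism, is actually convergent at $0$.

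The key step is therefore the analyticity of $\wt F$. Here I would use the description of $\wt X$ given in the proof of Proposition~\ref{pro:permissible-X}. In the blow‑up case, formula (\ref{eq:X-after-blowup}) shows that $\wt X=\phi^*X$ has coefficients in $\C[[z_1,\dots,z_n]]$ that are, in fact, convergent: they are obtained from the convergent coefficients $a_i(\phi(\zz))$ by an explicit division by $z_1$ that, by the invariance of the center $Z$ and of $\G$, leaves no pole. Likewise in the ramification case the coefficients of $\wt X$ are convergent by the explicit formula. So $\wt X$ is a germ of \emph{holomorphic} vector field at $0\in\C^n$. A germ of holomorphic vector field has a time‑$1$ flow which is a germ of biholomorphism at $0$ (it may be necessary first to pass to an iterate, or to work on a small polydisc where the flow is defined for time $1$, but since $\wt X(0)=0$ the origin is fixed and the flow is defined for all complex time in a neighbourhood of $0$ by the usual existence theorem for holomorphic ODEs). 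Hence $\Exp\wt X$ is a germ of biholomorphism, i.e. $\wt F\in\Diff\Cd n$. Its Taylor expansion at $0$ is the formal diffeomorphism $\Exp\wt X$, which off $E_\phi$ coincides with $\phi^{-1}\circ F\circ\phi$; by continuity the identity $\phi\circ\wt F=F\circ\phi$ holds as an identity of holomorphic maps near $0$.

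Finally, invariance of $\wt\G$ for $\wt F$ follows from invariance of $\wt\G$ for $\wt X$, which is part of Proposition~\ref{pro:permissible-X}: indeed, by Proposition~\ref{pro:perbinv} (applied with the trivial relation $F=\Exp\wt X$, or more directly since the flow of $\wt X$ preserves any ideal invariant under $\wt X$), if $\wt X(\wt\G)\subset\wt\G$ then $g\circ\Exp(t\wt X)\in\wt\G$ for every $g\in\wt\G$ and every $t$, in particular for $t=1$. Alternatively one argues directly with parametrizations: $\wt X|_{\wt\g(s)}=h(s)\,\wt\g'(s)$ exhibits $\wt\g$ as an orbit of $\wt X$ up to reparametrization, so its time‑$1$ flow image is again a reparametrization of $\wt\g$, which is exactly the statement that $\wt\G$ is $\wt F$‑invariant. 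The main obstacle, as indicated, is the passage from a formal to a convergent time‑$1$ flow at points of $E_\phi$; once one records that $\wt X$ is genuinely holomorphic there — which is visible from the explicit formulas in the proof of Proposition~\ref{pro:permissible-X} — the rest is routine.
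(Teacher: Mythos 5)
Your argument hinges on the claim that $\wt X$ is a germ of \emph{holomorphic} vector field, so that its time-$1$ flow is automatically a germ of biholomorphism. But the hypothesis only provides a \emph{formal} vector field $X\in\cvf\Cd n$ with $F=\Exp X$: in the intended application $X$ is an infinitesimal generator of an iterate of $F$, and such generators are in general divergent even though $F$ is analytic (this is precisely why the analyticity of $\wt F$ needs a proof). Consequently the coefficients $a_i$ appearing in formula (\ref{eq:X-after-blowup}) are merely formal power series, the transform $\wt X$ is again only formal, and the existence theorem for holomorphic ODEs cannot be invoked to conclude that $\Exp\wt X$ converges. For the same reason the preliminary step ``away from $E_\phi$ we have $\Exp\wt X=\phi^{-1}\circ\Exp X\circ\phi$ as convergent maps'' has no content: a formal vector field at the origin does not define a flow at nearby points off the divisor, so there is no open set on which the two analytic maps could be compared.

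The analyticity of $\wt F$ must come from the analyticity of $F$, not of $X$, and this is how the paper argues. One solves $\phi\circ\wt F=F\circ\phi$ explicitly: in the blow-up case $z_j\circ\wt F=(z_j\circ F\circ\phi)/(z_1\circ F\circ\phi)$ for $j=2,\dots,t$ and $z_j\circ\wt F=z_j\circ F\circ\phi$ otherwise, and the divisions are legitimate because the tangent line of $\G$ is invariant for $D_0F$ (so $z_1\circ F$ contains a monomial $az_1$ with $a\neq0$) and the center $Z$ is invariant for $F$ (so $z_j\circ F\in(z_1,\dots,z_t)$ for $j\le t$); in the ramification case one extracts an $l$-th root of $z_1^l$ times a unit. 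This yields an analytic $\wt F$, which is only afterwards identified with $\Exp\wt X$ through the formal identity $\wt X^j(g\circ\phi)=X^j(g)\circ\phi$ and the uniqueness of the solution of $\phi\circ\wt F=F\circ\phi$. Your final step (the flow of $\wt X$ preserves $\wt X$-invariant ideals, hence $\wt\G$ is $\wt F$-invariant) is fine, but the central analyticity argument as you propose it does not work and must be replaced by the lifting argument above.
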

\begin{proof}
If $\phi$ is a germ of a diffeomorphism, the result is clear.

Assume that $\phi$ is a permissible blow-up with center $Z$.
Consider analytic coordinates $\zz=(z_1,z_2,...,z_n)$ such that $\G$
is tangent to the $z_1$-axis, $Z=\{z_1=z_2=\cdots=z_t=0\}$ and
$\phi(\zz)=(z_1,z_1z_2,...,z_1z_t,z_{t+1},...,z_n)$. The
condition $\phi\circ\wt{F}=F\circ\phi$ can be written as
$$
z_j\circ\wt{F}(\zz)=\left\{%
\begin{array}{ll}
    \dfrac{z_j\circ F (\phi(\zz))}{z_1\circ F (\phi(\zz))}, & \hbox{if $j=2,...,t$;}  \\[10pt]
    z_j\circ F(\phi(\zz)), & \hbox{if $j\in\{1,t+1,...,n\}$}. \\
\end{array}%
\right.
$$
Since the tangent line of $\G$ is invariant for $D_0F$, the series $z_1\circ F(\zz)$ has a monomial of the form $az_1$, with $a\neq0$. Moreover, since $Z$ is invariant for $F$, we have $z_j\circ F(\zz)\in(z_1,...,z_t)$ for $j=1,...,t$. Therefore $z_1\circ F(\phi(\zz))=z_1(a+A(\zz))$ with $A(0)=0$ and $z_j\circ F(\phi(\zz))$ is divisible by $z_1$
for $j=2,...,t$. We conclude that $\wt{F}\in\Diff\Cd n$. By construction, $\wt{F}$ is the unique formal diffeomorphism such that $\phi\circ\wt{F}=F\circ\phi$. We have also that $\wt F=\Exp\wt{X}$ since $\wt{X}^j(g\circ\phi)=X^j(g)\circ\phi$
for any $g\in\hat{\mathcal{O}}_n$ and any $j\geq 1$, and thus $\Exp\wt{X}$ also satisfies
formally $\phi\circ\Exp\wt{X}=F\circ\phi$. Notice finally that $\wt\G$ is invariant for $\wt X$ by Proposition~\ref{pro:permissible-X} and hence $\wt\G$ is also invariant for $\wt F=\Exp\wt X$.

Assume now that $\phi$ is a permissible $l$-ramification, written
in some coordinates $\zz$ as $\phi(\zz)=(z_1^l,z_2,...,z_n)$, that is, $Z=\{z_1=0\}$ is the center of $\phi$.
As in the case of permissible blow-ups, we have that the formal diffeomorphism $\wt F=\Exp\wt X$ satisfies $\phi\circ\wt F=F\circ\phi$. This identity means
$$
(z_1\circ\wt{F}(\zz))^l=z_1\circ
F(\phi(\zz));\quad z_j\circ\wt{F}(\zz)=
   z_j\circ F (\phi(\zz)),\,j=2,...,n.
$$
On the other hand, since $Z$ is invariant for $F$, we have $z_1\circ F(\zz)=z_1(a+A(\zz))$, with $a\neq0$ and $A(0)=0$. We conclude that
$\wt{F}\in\Diff\Cd n$. The proof of the invariance of $\wt\G$ by $\wt{F}$ is the same as in the previous case.
\end{proof}
\begin{remark}
\label{rem:inv_inf_gen}
Assume that $X$ is an infinitesimal generator of $F$ in the sense of Definition~\ref{def:infgen}. Then, the transformed vector field $\wt{X}$ in Proposition \ref{pro:permissible-F} is an infinitesimal generator of $\wt{F}$, since being not weakly
resonant is invariant by $T$-transformations. More precisely, given $X \in \cvf\Cd n$ with
$\mathrm{spec} (D_0 X)= \{ \mu_1, \hdots, \mu_n \}$, consider
\[ R(X)= \{ m_1 \mu_1 + \hdots + m_n \mu_n : m_1, \hdots, m_n \in {\mathbb Q} \} . \]
It is easy to verify that $R(X) = R(\wt{X})$ for regular transformations,
ramifications and blow-ups. It also holds for  shearing transformations since they
are compositions of blow-ups.
\end{remark}

The main result in this section is the following.

\begin{theorem}[Reduction of a diffeomorphism to Ramis-Sibuya form]\label{th:F-RS-form}
Let $F\in\Diff\Cd n$ be a germ of a diffeomorphism having a formal invariant curve $\G$. Assume that $\G$ is rationally neutral and not contained in the set of fixed points of any non-trivial iterate of $F$. Let $m$ be the index of embeddability of $F$. Then there exists a finite composition $\Phi$ of permissible transformations
for $(F^m,\G)$ and some coordinates $(x,\yy)$ at $0\in\C\times\C^{n-1}$ so that, if $(\wt{F^m},\wt\G)$ is the transform of $(F^m,\G)$ by $\Phi$, then $\wt{\G}$ is non-singular and transversal to $\{x=0\}$ and $\wt{F^m}$ is written as:
\begin{equation}\label{eq:RS-form}
\left\{
\begin{array}{l}
  x\circ\wt{F^m}(x,\yy)=x-x^{q+1}+  b x^{2q+1} + O(x^{2q+2}) \\
  \yy\circ\wt{F^m}(x,\yy)=\exp(D(x)+x^qC)\yy+O(x^{q+1}),
\end{array}\right.
\end{equation}
where $q\ge 1$,  $b \in {\mathbb C}$,
$D(x)$ is a diagonal matrix of polynomials of
degree at most $q-1$, $C$ is a constant matrix, $D(x)+x^qC\not\equiv 0$, $[D(x),C]=0$ and the order of contact of $\wt{F^m}$ with the identity  coincides with the order of the matrix $D(x)+x^qC$ plus one. In this case, we say that the pair $(\wt{F^m},\wt\G)$ is in \emph{Ramis-Sibuya form}.
\end{theorem}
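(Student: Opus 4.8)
The plan is to push the reduction to Ramis--Sibuya form through an infinitesimal generator: first reduce the pair $(X,\G)$ for a suitable vector field $X$ with $\Exp(X)=F^m$, then read off the resulting form of the time-$1$ flow. By Theorem~\ref{theorem:infinitesimal_generator}, $F^m$ has an infinitesimal generator $X\in\cvf\Cd n$. Since $\G$ is invariant for $F$, hence for a non-trivial iterate of $F$, Proposition~\ref{pro:perbinv} gives that $\G$ is invariant for $X$; and since $\G$ is rationally neutral, Proposition~\ref{pro:remres2} (with $r=1$) gives that the inner eigenvalue of $(X,\G)$ vanishes, so $\nu(X|_\G)\ge 2$; in particular $X|_\G\ne 0$, i.e.\ $\G$ is not contained in the singular locus of $X$. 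Also, being an infinitesimal generator, $X$ is not weakly resonant by Theorem~\ref{teo:car_inf_gen}. I would then apply Theorem~\ref{th:X-RS-form} to obtain a finite composition $\Phi$ of permissible transformations for $(X,\G)$ whose transform $(\wt X,\wt\G)=\Phi^{*}(X,\G)$ is in RS-form \eqref{eq:X-RS-form} in coordinates $(x,\yy)$; by Proposition~\ref{pro:permissible-X} one has $\nu(\wt X|_{\wt\G})\ge\nu(X|_\G)\ge 2$, so $q=\nu(\wt X|_{\wt\G})-1\ge 1$, and then Remark~\ref{rk:RS-form} allows the normalization $\lambda=-1$; moreover $\wt X$ remains not weakly resonant by Remark~\ref{rem:inv_inf_gen}.

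Next I would transfer this to the diffeomorphism. Since $F^m=\Exp(X)$ and $\G$ is invariant for $X$, Proposition~\ref{pro:permissible-F}, applied inductively along the successive steps of $\Phi$, shows that $\Phi$ is a permissible transformation for $(F^m,\G)$, that $\wt{F^m}:=\Exp(\wt X)$ is a germ of biholomorphism satisfying $\Phi\circ\wt{F^m}=F^m\circ\Phi$, and that $\wt\G$ is invariant for $\wt{F^m}$; in addition $\wt X$ is an infinitesimal generator of $\wt{F^m}$.

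The heart of the argument is then to compute $\wt{F^m}=\Exp(\wt X)$ in these coordinates. Write $M(x)=D(x)+x^{q}C$. For the $x$-component I would expand $x\circ\wt{F^m}=\sum_{j\ge 0}\wt X^{j}(x)/j!$, using $\wt X(x)=\lambda x^{q+1}+bx^{2q+1}+O(x^{2q+2})$, the fact that $\wt X^{j}(x)=O(x^{2q+2})$ for $j\ge 3$ (valid since $q\ge 1$) and $\lambda=-1$; this yields $x\circ\wt{F^m}=x-x^{q+1}+\tilde b\,x^{2q+1}+O(x^{2q+2})$ with $\tilde b\in\C$. For the $\yy$-component I would avoid the term-by-term expansion (in which spurious low-order contributions from $\wt X(M(x))$ appear) and instead use the formal flow: setting $x_t=x\circ\Exp(t\wt X)$, $\yy_t=\yy\circ\Exp(t\wt X)$, one has $\dot x_t=\lambda x_t^{q+1}+\cdots$ and $\dot\yy_t=M(x_t)\yy_t+x_t^{q+1}B(x_t,\yy_t)$ with $(x_0,\yy_0)=(x,\yy)$. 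From $\dot x_t=\lambda x^{q+1}+O(x^{q+2})$ one gets $x_t\equiv x\pmod{x^{q+1}}$, hence $M(x_t)\equiv M(x)$ and $x_t^{q+1}B(x_t,\yy_t)\equiv 0\pmod{x^{q+1}}$; thus modulo $x^{q+1}$ the series $\yy_t$ solves the linear ODE $\dot\yy_t=M(x)\yy_t$ with $\yy_0=\yy$, whose unique solution is $\exp(tM(x))\yy$, so $\yy\circ\wt{F^m}=\exp(D(x)+x^{q}C)\yy+O(x^{q+1})$. The structural conditions on $q$, $D(x)$, $C$ are inherited from Definition~\ref{def:X-RS-form}. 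Finally, for the order of contact of $\wt{F^m}$ with the identity: from the two formulas it equals $\min\{q+1,\ \nu(\exp(M(x))-I)+1\}$, and I would check $\nu(\exp(M(x))-I)=\nu(M(x))=\nu\le q$ — when $\nu\ge 1$ because $\exp(M(x))-I=M(x)+O(x^{2\nu})$, and when $\nu=0$ because $M(0)=D(0)\ne 0$ and the non-weak-resonance of $\wt X$ forces each nonzero diagonal entry of $D(0)$ outside $2\pi i\Z$, hence $\exp(D(0))\ne I$. So the order of contact equals $\nu+1$, the order of $D(x)+x^{q}C$ plus one, and $(\wt{F^m},\wt\G)$ is in Ramis--Sibuya form (after the linear change of Remark~\ref{rk:RS-form}, itself a permissible transformation).

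The main obstacle is the computational step above: the rest is an assembly of results from Sections~\ref{sec:infinitesimal} and~\ref{sec:RS-vector-fields}, but one must ensure that the \emph{matrix} exponential $\exp(D(x)+x^{q}C)$ appears with the exact error $O(x^{q+1})$ — which is precisely why the flow-ODE argument reduced modulo $x^{q+1}$ is preferable to a naive expansion of $\sum_j\wt X^{j}(\yy)/j!$ — and that the degenerate case $\nu=0$ in the order-of-contact claim is excluded exactly because $\wt X$ is a genuine infinitesimal generator, i.e.\ not weakly resonant.
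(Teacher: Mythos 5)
Your route is essentially the paper's: take an infinitesimal generator $X$ of $F^m$ (Theorem~\ref{theorem:infinitesimal_generator}), transfer invariance of $\G$ and the vanishing of the inner eigenvalue to $X$ (Propositions~\ref{pro:perbinv} and~\ref{pro:remres2}), reduce $(X,\G)$ to RS-form by Theorem~\ref{th:X-RS-form}, transfer the reduction to $\wt{F^m}=\Exp\wt X$ via Proposition~\ref{pro:permissible-F}, and then read off \eqref{eq:RS-form} from the exponential. Your flow-ODE computation modulo the ideal $(x^{q+1})$, giving $\yy\circ\wt{F^m}=\exp(D(x)+x^qC)\yy+O(x^{q+1})$, correctly fills in a step the paper compresses into ``we conclude'', and your treatment of the order of contact is a valid variant: you handle the case $\nu=0$ directly through non-weak resonance of $\wt X$ (via Theorem~\ref{teo:car_inf_gen} and Remark~\ref{rem:inv_inf_gen}), whereas the paper argues in the tangent-to-identity case through the uniqueness of the nilpotent generator and $D_0\wt X=0$; both give $\nu(\exp(D(x)+x^qC)-I)=\nu$ and hence order of contact $\nu+1$.

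There is, however, one genuine gap. To apply Theorem~\ref{th:X-RS-form} you must know that $\G$ is \emph{not contained in the singular locus of $X$}, and your justification --- ``$\nu(X|_\G)\ge 2$; in particular $X|_\G\ne 0$'' --- is a non sequitur: Proposition~\ref{pro:remres2} only says the inner eigenvalue of $(X,\G)$ vanishes, which is perfectly compatible with $X|_\G\equiv 0$, i.e.\ with $\G\subset\Sing(X)$, in which case there is no integer $q$ and the whole reduction breaks down. This is precisely where the hypothesis that $\G$ is not contained in the fixed-point set of any non-trivial iterate of $F$ must enter --- a hypothesis your argument never uses, although without it the statement is false. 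The repair is the paper's one-line argument: if $\G\subset\Sing(X)$, then all coefficients of $X$ lie in the ideal $\G$, hence $X^j(g)\in\G$ for every $g\in\hat{\mathcal{O}}_n$ and $j\ge 1$, so $F^m=\Exp(X)$ fixes $\G$ pointwise, contradicting the hypothesis. With that correction inserted, your proof is complete and matches the paper's.
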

\begin{proof}
By Theorem~\ref{theorem:infinitesimal_generator}, the iterate $F^m$ has an infinitesimal generator $X\in\cvf\Cd n$.
By Proposition~\ref{pro:remres2}, $\Gamma$ is invariant for $X$
and $\nu(X|_\Gamma)\ge2$
(notice that $\G$ is rationally neutral for any iterate of $F$).  Moreover, $\G$ is not contained in the singular locus of $X$ since, otherwise, $\G$ would be contained in the set of fixed points of $F^m=\Exp X$. By Theorem~\ref{th:X-RS-form}, there exists a composition $\Phi$ of finitely many permissible transformations for $(X,\G)$  such
that the transform $(\wt{X},\wt{\G})$ of $(X,\G)$ by $\Phi$ is in RS-form. Fix some coordinates
$(x,\yy)$ such that $(\wt{X},\wt{\G})$ is written as in equation (\ref{eq:X-RS-form}):
$$
\wt{X}=\left(\lambda x^{q+1}+  b x^{2q+1} + O(x^{2q+2})
\right)\parcial x+\left((D(x)+x^qC)\yy+O(x^{q+1})\right)\parcial\yy.
$$
Notice that $q\ge 1$ since $\nu(\wt X|_{\wt\G})\ge2$ by Proposition~\ref{pro:permissible-X}.
In particular, by Remark~\ref{rk:RS-form} we may assume that $\lambda=-1$.
We conclude that the transform $\wt{F^m}=\Exp\wt X$ of $F$ by $\Phi$ is written as in equation (\ref{eq:RS-form}) with the required properties $q\ge 1$, $D(x)+x^qC\not\equiv 0$, $D(x)$ diagonal of degree at most $q-1$ and $[D(x),C]=0$. Let $\nu$ be the order of $D(x)+x^qC$. It remains to prove that the vector $\yy\circ\wt{F^m}-\yy\in\C\{x,\yy\}^{n-1}$ has order $\nu+1$. If $\wt{F^m}$ is not tangent to the identity then $D_\yy\wt{F^m}(0)=\exp((D(x)+x^qC))|_{x=0}\ne I_{n-1}$, which implies $\nu=0$ and the property holds. Suppose that $\wt{F^m}$ is tangent to the identity.
Then, by Remarks ~\ref{rem:unipotentdiffeo} and \ref{rem:inv_inf_gen}, $\wt{X}$ is the unique nilpotent vector field such that $\wt{F^m}=\exp\wt{X}$ and we know that $\nu(\wt X)\ge2$
since $I=D_0 \wt{F^m}= \Exp (D_0 \wt X)$ and $D_0 \wt X$ is nilpotent.
Using the formula for the exponential, we conclude that $\yy\circ\wt{F^m}-\yy$ has order equal to $\nu(\wt{X})=\nu+1$, as wanted.
 \end{proof}

To finish this section, we show that stable manifolds of $F$, as well as asymptotic orbits to $\G$, are preserved under a permissible transformation for $(F,\G)$. Together with Theorem~\ref{th:F-RS-form}, this allows us to assume that the pair $(F,\G)$ is in Ramis-Sibuya form in order to prove Theorem~\ref{th:main} in the (remaining) case where $\G$ is rationally neutral and not contained in the set of fixed points of any iterate of $F$. Recall that to obtain Ramis-Sibuya form, we have first considered an iterate of $F$, then performed some punctual blow-ups along $\G$ and, once the transform of $\G$ is non-singular, some other permissible blow-ups with a center with a dimension possibly greater than $0$ and ramifications. Then, for our purposes, it is sufficient to consider the statement in the following way.

\begin{proposition}\label{pro:permissible-orbits}
Let $\phi$ be a permissible transformation for $(F,\G)$ with center $Z$ and exceptional divisor $E_\phi=\phi^{-1}(Z)$. Assume that $\G$ is non-singular if the center $Z$ has positive dimension. Consider a representative $\phi:\wt{V}\to V$ such that $F$ is defined in $V$ and $Z$ is an analytic smooth subvariety of $V$. Let $(\wt{F},\wt{\G})$ be the transform of $(F,\G)$ by $\phi$. We have
\begin{enumerate}[(i)]
\item If $\wt{S}\subset\wt{V}$ is a stable manifold of $\wt{F}$ in $\wt{V}$ such that $\wt{S}\cap E_\phi=\emptyset$ then $S=\phi(\wt{S})$ is a stable manifold of $F$ in $V$. Moreover, if $\wt{O}\subset\wt{S}$ is a $\wt{F}$-orbit asymptotic to $\wt{\G}$ then $O=\phi(\wt{O})$ is a $F$-orbit asymptotic to $\G$.
\item If $S\subset V$ is a stable manifold of $F$ such that $S\cap Z=\emptyset$ and every $F$-orbit in $S$ is tangent to $\G$, then $\wt{S}=\phi^{-1}(S)$ is a stable manifold of $\wt{F}$. Moreover, if $O\subset S$ is a $F$-orbit asymptotic to $\G$ then $\wt{O}=\phi^{-1}(O)$ is a $\wt{F}$-orbit asymptotic to $\wt{\G}$.
\end{enumerate}
\end{proposition}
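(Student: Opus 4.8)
The plan is to treat the two cases of a permissible transformation --- a blow-up with smooth center $Z$ transversal to $\G$, and an $l$-ramification along a smooth hypersurface $Z=\{z_1=0\}$ transversal to the non-singular curve $\G$ --- separately, since in each case $\phi$ restricts to a biholomorphism off the exceptional divisor, and then to track two things along an orbit: convergence to the origin, and the asymptotic (iterated-tangent) condition. The key structural observation is that $\phi\colon \wt V\setminus E_\phi\to V\setminus Z$ is a biholomorphism, and that $\phi\circ\wt F=F\circ\phi$ by Proposition~\ref{pro:permissible-F}; hence $\phi$ carries $\wt F$-orbits in $\wt V\setminus E_\phi$ bijectively onto $F$-orbits in $V\setminus Z$, and an orbit accumulates only at $0$ (resp. at the relevant point $p\in E_\phi$ on the strict transform of $\G$) on one side if and only if the corresponding orbit does on the other. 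For (i), if $\wt S\cap E_\phi=\emptyset$ then, shrinking $\wt V$ if necessary, $\phi|_{\wt S}$ is a biholomorphism onto its image; $S=\phi(\wt S)$ is therefore a locally closed analytic submanifold of $V$, it is $F$-invariant because $F(S)=F(\phi(\wt S))=\phi(\wt F(\wt S))\subset\phi(\wt S)=S$, and every $F$-orbit in $S$ is the $\phi$-image of an $\wt F$-orbit in $\wt S$, which converges to the point $p$ over the origin lying on $\wt\G$; since $\phi(p)=0$ and $\phi$ is continuous, the image orbit converges to $0$. Thus $S$ is a stable manifold of $F$. For (ii), symmetrically, $S\cap Z=\emptyset$ gives that $\wt S=\phi^{-1}(S)$ lies in $\wt V\setminus E_\phi$, where $\phi$ is a local biholomorphism, so $\wt S$ is a locally closed analytic submanifold, it is $\wt F$-invariant by the same computation read backwards, and every $\wt F$-orbit in $\wt S$ is the $\phi^{-1}$-image of an $F$-orbit in $S$ converging to $0$; since every such orbit is tangent to $\G$ --- equivalently, in the blow-up chart it has a limit, which must be the point $p$ on the strict transform of $\G$, because $\phi^{-1}$ of an orbit tangent to $\G$ necessarily accumulates on the fiber $\phi^{-1}(0)$ at the tangent direction of $\G$ --- the lifted orbit converges to $p\in\wt\G$, so $\wt S$ is a stable manifold of $\wt F$.

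It then remains to upgrade ``convergence to $p\in\wt\G$'' to full asymptoticity to $\wt\G$, and conversely. Here the plan is to invoke the characterization recalled in Section~\ref{sec:blow-ups}: an orbit is asymptotic to a formal curve precisely when, after every finite composition of punctual blow-ups, the lifted orbit converges to the corresponding iterated tangent. The sequence of iterated tangents of $\wt\G$ maps, under the tower of blow-ups compatible with $\phi$, onto (a tail of, or an appropriate subsequence of) the sequence of iterated tangents of $\G$ --- this is exactly the content of Proposition~\ref{pro:permissible-X}, which says $\phi^*\G\subset\wt\G$ and identifies the transformed curve via its parametrization \eqref{eq:gamma-tilde}, \eqref{eq:gamma-tilde-ramification}. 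Concretely: given a further composition $\sigma$ of blow-ups over $\wt V$, one can find a composition $\tau$ of blow-ups over $V$ and a morphism of blow-up towers making the square with $\phi$ commute (one factors $\phi$ itself, which is a blow-up or a ramification, into, or through, the tower); then $\sigma^{-1}(\wt O)$ has a limit iff $\tau^{-1}(O)$ has a limit, and the two limits correspond under the induced map, landing on the iterated tangent of $\wt\G$ resp. of $\G$. For a ramification one uses instead that $\phi$ is a local homeomorphism near $\G\setminus Z$ and preserves tangent directions, so the iterated tangents are literally the same after the first one; the reparametrization $s\mapsto s^l$ in \eqref{eq:gamma-tilde-ramification} does not affect which point the lifted orbits converge to.

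I expect the main obstacle to be precisely this last point: making rigorous the compatibility between the \emph{two} blow-up towers (the one defining asymptoticity upstairs and the one downstairs) when $\phi$ is itself a non-trivial blow-up with a positive-dimensional center, since then the fiber $\phi^{-1}(0)$ is positive-dimensional and one must argue that an orbit tangent to $\G$ (not merely convergent to $0$) lifts to an orbit converging to the single point $p$ --- this is where the hypothesis ``every $F$-orbit in $S$ is tangent to $\G$'' in part (ii), and the hypothesis $\wt S\cap E_\phi=\emptyset$ in part (i), are used, and where one must be careful that no orbit escapes along the exceptional divisor. The cleanest way around this is to do the bookkeeping at the level of formal parametrizations and $(x)$-adic estimates as in the remark at the end of Section~\ref{sec:blow-ups} (the inequality $\|\yy_k-J_N\mathbf h(x_k)\|\le C_N|x_k|^{N+1}$), which is stable under the explicit coordinate expressions \eqref{eq:expression-blow-up} and $\phi(\zz)=(z_1^l,z_2,\dots,z_n)$ of permissible transformations; the routine but slightly tedious verification that these estimates transform correctly is what I would relegate to a computation. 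As this proposition is a direct higher-dimensional transcription of the corresponding two-dimensional statement, the arguments of \cite{Lop-R-R-S} apply essentially verbatim once the commuting-square formalism above is in place.
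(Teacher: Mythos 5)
Your proposal is correct and follows essentially the same route as the paper: the stable-manifold assertions come from the conjugacy $\phi\circ\wt F=F\circ\phi$ together with $\phi$ being an isomorphism off $E_\phi$, the asymptoticity claim is immediate from the definition when $\phi$ is a punctual blow-up, and in the remaining cases (positive-dimensional center or ramification, where $\G$ is non-singular) it is transferred via the parametrization characterization $\left\|\yy_k-J_N{\bf h}(x_k)\right\|\leq C_N|x_k|^{N+1}$ and the explicit formulas \eqref{eq:gamma-tilde}, \eqref{eq:gamma-tilde-ramification} for $\wt\G$. The detour through a general commuting square of blow-up towers is unnecessary, but your fallback to the parametrization estimates is exactly what the paper does.
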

\begin{proof}
The two assertions concerning the stable manifolds are consequences of the fact that $\phi\circ\tilde F=F\circ\phi$, together with the fact that $\phi$ is an isomorphism outside the divisor $E_\phi$. The assertions concerning the asymptoticity of the orbits are immediate from the definition in the case where $\phi$ is the blow-up at $0$. In the other cases, we take coordinates $\zz$ such that $\G$ is parameterized by $\g(s)=(s,\g_2(s),...,\g_n(s))$ and such that $\phi$ is either a ramification with respect to $Z=\{z_1=0\}$ or is written as in (\ref{eq:expression-blow-up}) in the case of a blow-up. Using the corresponding formulas (\ref{eq:gamma-tilde-ramification}) or (\ref{eq:gamma-tilde}) for a ramification of the transformed curve $\wt{\G}$ (again non-singular), the result is a consequence of the characterization of asymptoticity of orbits to a non-singular curve in terms of a parametrization of the curve (see Section~\ref{sec:blow-ups}).
\end{proof}

\section{Existence of stable manifolds}\label{sec:stable-manifolds}
Consider a diffeomorphism $F\in\Diff\Cd n$ and a formal non-singular invariant curve $\G$ such that the pair $(F,\G)$ is in Ramis-Sibuya form, i.e. there exist coordinates $(x,\yy)=(x,y_2,...,y_n)$ at $0\in\C^n$ such that $\G$ is transverse to $x=0$ and such that $F$ is written as
\begin{align*}
  x\circ F(x,\yy)&=x-x^{q+1}+  b x^{2q+1} + O(x^{2q+2}) \\
  \yy\circ F(x,\yy)&=\exp\left(D(x)+x^qC\right)\yy+O(x^{q+1}),
\end{align*}
where $q\ge1$,  $b \in{\mathbb C}$
and $D(x)$ and $C$ satisfy the properties of Theorem~\ref{th:F-RS-form}. Denote by $k+1$ the order of contact of $F$ with the identity, which coincides with the order of $D(x)+x^qC$ plus one. Note that $0\le k\le q$, and put $p=q-k\ge0$.

We define the \emph{attracting directions} of $(F,\G)$ as the $q=k+p$ half-lines $\{x\in\xi\R^+\}$, where $\xi^{k+p}=1$. Observe that, when $\G$ is convergent, these directions are the
limits of the secant real lines passing through the origin and points in an orbit of
the restricted diffeomorphism $F|_{\G}\in\Diff(\C,0)$, converging to $0$.
We classify the attracting directions of $(F,\G)$  as follows. Write $D(x)+x^qC=x^k\left(\ol D(x)+x^pC\right)$, where $\ol D(x)=0$ in case $p=0$. In case $p\ge1$, set
$$\ol D(x)=\diag(d_2(x),...,d_n(x))$$
and, for any
$2\le j\le n$, write
$d_j(x)=A_{j,\nu_j}x^{\nu_j}+A_{j,\nu_j+1}x^{\nu_j+1}+\dots+A_{j,p-1}x^{p-1}$ if $d_j(x) \neq 0$, 
where $\nu_j$ is the order of $d_j$ at 0. Given an attracting direction $\ell=\xi\R^+$ and $j\in\{2,...,n\}$, we say that  $\ell$ is a \emph{node direction} for $(F,\G)$ in the variable $y_j$ if $p\ge1$, $d_j(x)\neq0$ and
$$\left(\Real\left(\xi^{k+\nu_j}A_{j,\nu_j}\right), \Real\left(\xi^{k+\nu_j+1}A_{j,\nu_j+1}\right), ...,\Real\left(\xi^{k+p-1}A_{j,p-1}\right)\right)<0$$
in the lexicographic order; otherwise, we say that it is a \emph{saddle direction} for $(F,\G)$ in the variable $y_j$. Note that, if $p=0$, any attracting direction is a saddle direction in every variable. 

The rest of this section is devoted to complete the proof of  
Theorem~\ref{th:main}. After the results in Section~\ref{sec:hyperbolic} and Theorem~\ref{th:F-RS-form}, it suffices to show the following theorem.

\begin{theorem}\label{th:stablemanifold}
Consider a pair $(F,\G)$ in Ramis-Sibuya form and let $\ell$ be an attracting direction of $(F,\G)$. Let $s-1\ge 0$ be the number of variables for which $\ell$ is a node direction. Then, there exists a stable manifold $\mathcal{S}_{\ell}$ of $F$ of dimension $s$ in which every orbit is asymptotic to $\G$ and tangent to $\ell$. More precisely, there exist a connected and simply connected domain $S\subset\C^s$ with $0\in\partial S$ and a holomorphic map $\varphi:S\to \C^{n-s}$ such that, up to reordering the variables, the set
$$\mathcal{S}_{\ell}=\left\{\left(x,\ww,\varphi(x,\ww)\right)\in\C\times\C^{s-1}\times\C^{n-s}:(x,\ww)\in S\right\}$$
satisfies the following properties:
\begin{enumerate}[i)]
	\item $\mathcal{S}_\ell$ is a stable manifold of $F$.
	\item Every orbit $\{(x_j,\yy_j)\}\subset\mathcal{S}_\ell$ is asymptotic to $\G$ and $\{x_j\}$ is tangent to $\ell$.
	\item If $\{(x_j,\yy_j)\}\subset \C\times\C^{n-1}$ is an orbit of $F$ asymptotic to $\G$ such that $\{x_j\}$ has $\ell$ as tangent direction, then $(x_j,\yy_j)\in \mathcal S_\ell$ for all $j$ sufficiently big.
\end{enumerate}
\end{theorem}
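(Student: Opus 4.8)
The strategy is to construct $\mathcal{S}_\ell$ as a graph over a suitable ``parabolic sector'' in the $(x,\ww)$-variables, obtained as a fixed point of a functional operator, following the scheme of \cite{Hak2,Lop-R-R-S,Ari-R} but replacing the Banach fixed point argument by Schauder's theorem. First I would normalize the attracting direction: after a rotation $x\mapsto \xi x$ we may assume $\ell=\R^+$, so that $x\circ F(x,\yy)=x-x^{q+1}+bx^{2q+1}+O(x^{2q+2})$. Then I would split the $\yy$-variables, according to the Ramis-Sibuya data restricted to $\ell$, into a ``node block'' $\ww=(y_{j})_{j\in N}$ of size $s-1$ (those $j$ for which $\ell$ is a node direction) and a ``saddle block'' $\zz=(y_j)_{j\in S}$ of size $n-s$; condition (ii) in Definition~\ref{def:X-RS-form} together with $[D(x),C]=0$ guarantees this block decomposition is respected by the diagonal-plus-nilpotent linear part $\exp(D(x)+x^qC)$. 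The goal becomes: find $\vp$ on a domain $S\ni 0$ with the invariance property $F(\{\zz=\vp(x,\ww)\})\subset\{\zz=\vp(x,\ww)\}$, i.e. $\vp$ solves a functional equation of the form $\vp\circ F_1 = F_2$ along the graph, where $F_1$ is the $(x,\ww)$-component of $F$ restricted to the graph and $F_2$ is the $\zz$-component.

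The core estimates are one-dimensional in spirit. On the $x$-axis, $x\circ F$ has a parabolic fixed point with $q$ petals; one chooses the standard petal $P=P_{q,\varepsilon}$ (image of a half-plane $\Real(w)>C$ under $w\mapsto (qw)^{-1/q}$) so that $|x_{j}|\sim (qj)^{-1/q}$ along orbits and $\sum_j |x_j|^{m}<\infty$ for $m>q$. The $\ww$-dynamics $\ww\circ F=\exp(D(x)+x^qC)\ww+O(x^{q+1})$ behaves like a ``node'': along $\ell$ the factor $\exp(D(x)+x^qC)$ has, to leading order, modulus $<1$ on each node coordinate, so there are contracting invariant cones $\|\ww\|\le |x|^{\mu}$ in which forward orbits stay and are asymptotic to $\G$ (this uses precisely the sign condition defining node directions, encoded in the first asymptotic significant orders $r_j$). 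The $\zz$-block, by contrast, has the ``saddle'' property, and the functional equation for $\vp$ is solved \emph{backwards}: one defines the operator $T$ on a space $\mathcal{F}$ of continuous maps $\vp\colon \overline{S}\to\C^{n-s}$ with $\vp(x,\ww)=O(\text{(suitable power of }|x|))$ and $|\vp(x,\ww)-\vp(x,\ww')|\le |x|^{?}\|\ww-\ww'\|$, bounded and equicontinuous, by pulling back along $F$: $(T\vp)(x_0,\ww_0)=\lim_{j\to\infty}(\zz\text{-component of }F^{j})$ evaluated suitably, or equivalently as the unique bounded solution of the linear-in-$\vp$ recursion obtained by linearizing. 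Here $\mathcal{F}$ is a convex, closed, bounded subset of the Banach space of bounded continuous maps with sup norm; $T(\mathcal{F})\subset\mathcal{F}$ follows from the petal/cone estimates, and $T(\mathcal{F})$ is relatively compact by Arzelà–Ascoli since the defining Hölder/decay bounds give uniform equicontinuity. Schauder then yields a fixed point $\vp$. Holomorphy of $\vp$ is recovered afterwards: on the interior of $S$ the fixed point equation forces $\vp$ to satisfy $\bar\partial\vp=0$ (the operator preserves holomorphy on a dense subclass, or one applies Cauchy estimates to the iterates), so $\vp$ is holomorphic in the interior.

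Once $\mathcal{S}_\ell=\{(x,\ww,\vp(x,\ww))\}$ is built, property (i) is its $F$-invariance plus the fact that orbits inside converge to $0$ (from the petal estimate $x_j\to 0$ and the cone estimate $\yy_j\to 0$); property (ii) is the asymptoticity to $\G$, which I would verify using the characterization recalled in Section~\ref{sec:blow-ups}: for every $N$, $\|\yy_j-J_N\bar\g(x_j)\|\le C_N|x_j|^{N+1}$ eventually — this follows by a bootstrap on $N$ from the RS-form, exactly as in \cite{Lop-R-R-S}, improving the order of contact one step at a time using that the linear part contracts the relevant differences faster than the parabolic rate along $\ell$. Property (iii), the ``eventual containment'' of \emph{every} asymptotic orbit tangent to $\ell$, is where the lack of uniqueness in Schauder must be circumvented: given such an orbit $\{(x_j,\yy_j)\}$, asymptoticity forces $(x_j,\ww_j)$ to enter the petal–cone region $S$ for large $j$, and then the \emph{saddle} block forces $\zz_j=\vp(x_j,\ww_j)$: indeed the difference $\delta_j=\zz_j-\vp(x_j,\ww_j)$ satisfies a recursion whose linear part \emph{expands} (saddle direction) while $\delta_j\to 0$, so $\delta_j\equiv 0$ for $j$ large — this is the ``easy alternative argument'' replacing uniqueness.

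The main obstacle I expect is organizing the weighted function space and the estimates so that $T$ is simultaneously self-mapping and compact: the decay exponents for $\vp$ and for its $\ww$-Lipschitz constant must be tuned against the first asymptotic significant orders $r_j$ and the petal rate $1/q$, and the node-cone must be chosen compatibly, all while keeping the backward-iteration series for $T\vp$ absolutely convergent on the saddle block. Getting this bookkeeping right — essentially the quantitative heart of the $n$-dimensional generalization of \cite{Lop-R-R-S} with intermediate node/saddle splitting — is the bulk of the work; the Schauder application itself and the holomorphy/asymptoticity/uniqueness arguments are then comparatively routine.
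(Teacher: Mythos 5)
Your overall scheme is the paper's: normalize $\ell=\R^+$, split $\yy=(\ww,\zz)$ into node and saddle blocks using $[D(x),C]=0$, work in coordinates where $\G$ has high contact with the $x$-axis inside a cone $\|\ww\|<|x|^{m-1}$, realize the manifold as a graph $\zz=\varphi(x,\ww)$ solving the invariance equation via a Hakim-type forward-orbit series (your ``bounded solution of the linearized recursion'' is the paper's operator $T\varphi(x_0,\ww_0)=\sum_j E(x_0)E(x_j)^{-1}H(x_j,\ww_j,\varphi(x_j,\ww_j))$), apply Schauder on a compact convex set, prove (iii) and uniqueness by the expanding-saddle argument on $\delta_j=\zz_j-\varphi(x_j,\ww_j)$, and get asymptoticity by bootstrapping the contact order (the paper's chain $\mathcal S_m\subset\mathcal S_{m+1}$ eventually). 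The one concrete step that would fail as written is your choice of base domain in $x$: a standard petal $P_{q,\varepsilon}$ of opening $\approx\pi/q$. The node/saddle dichotomy is only a \emph{lexicographic} condition on the real parts $\Real(\xi^{k+l}A_{j,l})$, and when the leading real parts vanish the estimates you need, namely $\Real(x^kd_j(x))\le -c|x|^{k+t}$ on the node block and $\Real(x^kd_j(x)+x^{k+p}A_{j,p})\ge c|x|^{k+p}$ on the saddle block, are false on a full petal: if, say, $A_{j,\nu_j}$ is purely imaginary and the first negative real part only appears at order $r_j>\nu_j$, then $\Real(A_{j,\nu_j}x^{k+\nu_j})\sim -\Imag(A_{j,\nu_j})|x|^{k+\nu_j}\sin((k+\nu_j)\arg x)$ oscillates with size $|x|^{k+\nu_j}$ over the petal and swamps the honest term of size $|x|^{k+r_j}$ unless $\arg x=O(|x|^{r_j-\nu_j})$. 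This is exactly why the paper works over the cusp-shaped region $\petal=\{|x|<\varepsilon,\ -d(\Real x)^{r+1}<\Imag x<e(\Real x)^{r+1}\}$ with $r=\max_jr_j$ (Lemma~\ref{lem:nodaldomain}); everything downstream depends on it: the self-mapping of $\basin$ under the graph dynamics (Proposition~\ref{pro:basins}), the bound $\|E(x_0)E(x_j)^{-1}\|\le1$ that makes the series for $T$ admissible, and the saddle expansion used for (iii). Shrinking the domain to this cusp also creates an extra obligation your plan does not cover: for (iii) one must show that any orbit whose $x$-component converges tangentially to $\R^+$ eventually \emph{enters} the cusp, which the paper imports from \cite[Lemma 5.8]{Lop-R-R-S}.

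A secondary soft spot: running Schauder on a set of merely continuous maps and recovering holomorphy afterwards is not automatic. The fixed-point identity $\varphi=T\varphi$ only composes $\varphi$ with holomorphic data, so it does not force $\bar\partial\varphi=0$, and since $T$ is not a contraction you cannot pass Cauchy estimates through a convergent sequence of iterates. The paper sidesteps this by building holomorphy into the compact convex set itself ($\spaceH\subset\spaceB$, maps continuous on $\basinclosure$, holomorphic inside, bounded by $|x|^{m-1}$), so the fixed point is holomorphic by construction; also note that the nilpotent part of $C_2$ has to be normalized (entries $c/2$) so that the saddle expansion survives the off-diagonal terms---another small piece of the ``bookkeeping'' your plan defers.
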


\subsection*{Choice of coordinates} Up to a linear change of coordinates in the $x$-variable, we may assume that $\ell=\R^+$. We can also assume, without loss of generality, that $\ell$ is a node direction in the variables $y_2,...,y_s$ and a saddle direction in the variables $y_{s+1},...,y_n$ and that $C$ is in Jordan normal form (see Remark~\ref{rk:RS-form}).

Observe that we can increase the order of contact of $\G$ with the $x$-axis
by considering a polynomial change of variables of the form
$(x,\yy) \mapsto (x, \yy- J_N \ol\g(x))$ where $\g(x)=(x,\ol\g(x))$
is a parametrization of $\G$. Moreover, the matrices $D(x)$ and $C$ that appear in the expression of $\yy\circ F$ are preserved by such transformations.
Note
also that after a permissible punctual blow-up the transformed pair
$(\widetilde{F}, \widetilde{\G})$ is again in Ramis-Sibuya form in usual coordinates
$(x,\ol\yy)$ with
$\yy = x \ol\yy$ as in Section~\ref{sec:reduction}.
Moreover, the matrix $D(x)$ is invariant
by blow-up whereas $C$ is replaced by $C + I_{n-1}$.
Consequently, the saddle or node character of $\ell=\R^+$ in each variable does not change and, by Proposition~\ref{pro:permissible-orbits}, it suffices to prove Theorem~\ref{th:stablemanifold} in the new
coordinates $(x,\overline{\yy})$.
 Therefore, taking $N$ sufficiently big and up to several punctual admissible blow-ups,  if we put $(x,\yy)=(x,\ww,\zz)\in\C\times\C^{s-1}\times\C^{n-s}$ we can write $F$ as
\begin{align*}
x\circ F(x,\yy)&=f(x,\yy)=x-x^{k+p+1}+  b x^{2k+2p+1} + O(x^{2k+2p+2}) \\
\ww\circ F(x,\yy)&=\overline{F}_1(x,\yy)=\exp\left(x^k\left(\ol D_1(x)+x^pC_1\right)\right)\ww+O(x^{k+p+1})\\
\zz\circ F(x,\yy)&=\overline{F}_2(x,\yy)=\exp\left(x^k\left(\ol D_2(x)+x^pC_2\right)\right)\zz+O(x^{k+p+1}),
\end{align*}
where $b \in {\mathbb C}$,
$\ol D_1(x), \ol D_2(x), C_1, C_2$ are the corresponding blocks of $\ol D(x)$ and $C$ (note that this decomposition is guaranteed by the commutativity of $D(x)$ and $C$, see Remark~\ref{rk:RS-form}) and every eigenvalue of $C_2$ has positive real part.

In fact, we will use coordinates for which $\G$ has an arbitrarily big order of contact with the $x$-axis. Fix $m\in\N$, with $m\ge p+2$, and let $\g(x)=(x,\overline{\g}(x))$ be a parametrization of $\G$. Consider the polynomial change of variables $\yy\mapsto \yy^m=\yy-J_{p+m-1}\overline{\g}(x)$. In these coordinates, the order of contact of $\G$ with the $x$-axis is at least $p+m$, and the invariance of $\G$ implies that the order of $\yy^m\circ F(x,0)$ is at least $k+p+m$. Therefore, if we set $(x,\yy^m)=(x,\ww^m,\zz^m)\in\C\times\C^{s-1}\times\C^{n-s}$ we have
\begin{align*}
f(x,\yy^m)&=x-x^{k+p+1}+  b x^{2k+2p+1} + O(x^{2k+2p+2}) \\[4pt]
\ol F_1(x,\yy^m)&=\exp\left(x^k\left(\ol D_1(x)+x^pC_1\right)\right)\ww^m+O(x^{k+p+1}\|\yy^m\|,x^{k+p+m})\\[4pt]
\ol F_2(x,\yy^m)&=\exp\left(x^k\left(\ol D_2(x)+x^pC_2\right)\right)\zz^m+O(x^{k+p+1}\|\yy^m\|,x^{k+p+m}).
\end{align*}

Write, as above, $\ol D(x)=\diag(d_2(x),...,d_n(x))$, where $d_j(x)$ is a polynomial of degree at most $p-1$ for all $2\le j\le n$, and set $C_2=\diag(A_{s+1,p},...,A_{n,p})+N_2$, where $A_{j,p}\in\C$ for all $s+1\le j\le n$ and $N_2$ is a nilpotent matrix.

For any $2\le j\le n$, write
\[ e_{j}(x) := d_j(x) +  A_{j,p} x^{p} =A_{j,\nu_j}x^{\nu_j}+A_{j,\nu_j+1}x^{\nu_j+1}+\dots+A_{j,p}x^{p}, \]
where $\nu_j$ is the order of $e_j$ at 0 and $A_{j,p}=0$ for $2\le j\le s$. 
We set $k+\mu_j$ as the order of $x^{k} e_j(x)  - i \Imag [(x^{k} e_j(x))(0)]$. Note that $\mu_j=\nu_j$ if $k+\nu_j\ge1$ or if $k+\nu_j=0$ and $\Real(A_{j,0})\neq0$. Therefore, we have that
$$\Real\left(x^k e_j(x)  \right)=\Real\left(A_{j,\mu_j}x^{k+\mu_j}+A_{j,\mu_j+1}x^{k+\mu_j+1}+\dots+A_{j,p}x^{k+p}\right).$$
We define the \emph{first asymptotic significant order} $r_j=r_j(\ell)$ 
of $\ell=\R^+$ in the variable $y_j$ as $r_j=l-\mu_j$, where $\mu_j\le l \le p$ is the first index such that $\Real(A_{j,l})\neq0$ (note that $l$ is well defined because of the previous condition on the eigenvalues of $C_2$).

Note that $r_j$ does not depend on $m$, and that $r_j<k+p$ for all $j$: the inequality is clear if $p=0$ or $k+\mu_j\ge1$; otherwise, $r_j=0$ so it also holds.

Put $r=\max\{r_2,...,r_n\}$. For $d,e,\varepsilon>0$, we define the set $R_{d,e,\varepsilon}$ as
$$R_{d,e,\varepsilon}=\{x\in\C:|x|<\varepsilon, \Real x>0, -d(\Real x)^{r+1}<\Imag x<e(\Real x)^{r+1}\}.$$

\begin{lemma}\label{lem:nodaldomain}
Set $t=\max\{r_2+\mu_2,...,r_s+\mu_s\}<p$. There exists a constant $c>0$ such that, if $d,e,\varepsilon>0$ are sufficiently small, then for any $x\in\petal$ we have  
\begin{enumerate}[(i)]
	\item $\Real\left(x^kd_j(x)\right)\le -c|x|^{k+t}$
	for any $2\le j\le s$.
	\item $\Real\left(x^kd_j(x)+x^{k+p}A_{j,p}\right)\ge c|x|^{k+p}$
	for any $s+1\le j\le n$.
\end{enumerate}

\end{lemma}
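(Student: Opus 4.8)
The plan is to work in polar coordinates $x=\rho e^{i\theta}$, $\rho=|x|$, and to use that on $\petal$ the argument is small. For $x\in\petal$ we have $\Real x>0$, and the defining inequalities give $|\Imag x|<\max(d,e)(\Real x)^{r+1}$; hence, since $0<\Real x\le\rho$,
\[
|\theta|\le\frac{|\Imag x|}{\Real x}<\max(d,e)\,\rho^{r}=:\delta\,\rho^{r},
\]
where $\delta=\max(d,e)\to0$ as $d,e\to0$. The elementary estimate I would use throughout is that, for $\alpha\in\C$, $m\ge1$ and $x\in\petal$,
\[
\bigl|\Real(\alpha x^{m})-\rho^{m}\Real(\alpha)\bigr|
=\rho^{m}\bigl|\Real\bigl(\alpha(e^{im\theta}-1)\bigr)\bigr|
\le|\alpha|\,m\,|\theta|\,\rho^{m}\le|\alpha|\,m\,\delta\,\rho^{m+r},
\]
so the real part of a monomial $\alpha x^{m}$ equals $\rho^{m}\Real(\alpha)$ up to an error of higher order in $\rho$ whose size is controlled by $\delta$.

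First I would prove (i). Fix $j$ with $2\le j\le s$. Since $\ell=\R^{+}$ is a node direction in $y_{j}$, the tuple $(\Real A_{j,\nu_{j}},\dots,\Real A_{j,p-1})$ is lexicographically negative, so $\Real A_{j,l}=0$ for $\nu_{j}\le l<r_{j}$ and $\Real A_{j,r_{j}}<0$; in particular $r_{j}\le p-1$ (hence $t<p$) and $r_{j}\le t\le r$. Expanding $\Real(x^{k}d_{j}(x))=\sum_{l=\nu_{j}}^{p-1}\Real(A_{j,l}x^{k+l})$ and applying the monomial estimate: the terms with $l<r_{j}$ have vanishing leading part and total error $\le C_{j}\,\delta\,\rho^{k+r_{j}}$ (because $l+r\ge r_{j}$ makes the error exponent $k+l+r\ge k+r_{j}$ and $\rho<1$); the term $l=r_{j}$ equals $\rho^{k+r_{j}}\Real A_{j,r_{j}}$ plus an error of the same type; and the terms $l>r_{j}$ are $O(\rho^{k+r_{j}+1})$. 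Hence, for $\varepsilon$ small,
\[
\Real\bigl(x^{k}d_{j}(x)\bigr)\le\tfrac12\,\rho^{k+r_{j}}\Real A_{j,r_{j}}+C_{j}\,\delta\,\rho^{k+r_{j}},
\]
and taking $d,e$ small enough that $C_{j}\,\delta\le\tfrac14|\Real A_{j,r_{j}}|$ gives $\Real(x^{k}d_{j}(x))\le-\tfrac14|\Real A_{j,r_{j}}|\,\rho^{k+r_{j}}\le-\tfrac14|\Real A_{j,r_{j}}|\,\rho^{k+t}$, the last step since $r_{j}\le t$ and $\rho<1$. Setting $c$ equal to $\tfrac14$ times the minimum of the finitely many positive numbers $|\Real A_{j,r_{j}}|$ ($2\le j\le s$), and choosing $d,e,\varepsilon$ small enough for all these $j$ at once, yields (i).

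For (ii) I would use the same expansion, with $x^{k+p}A_{j,p}$ playing the role of the dominant term. Fix $j$ with $s+1\le j\le n$. Since $\ell$ is a saddle direction in $y_{j}$, the tuple $(\Real A_{j,\nu_{j}},\dots,\Real A_{j,p-1})$ is lexicographically nonnegative, so either (a) all its entries vanish (or $d_{j}\equiv0$) and then $r_{j}=p$, whence $r=p$; or (b) its first nonzero entry $\Real A_{j,r_{j}}$ is $>0$ with $r_{j}\le p-1$. Recall also that $\Real A_{j,p}>0$, being an eigenvalue of $C_{2}$. In case (b) the argument of (i) gives $\Real(x^{k}d_{j}(x))\ge\tfrac12\rho^{k+r_{j}}\Real A_{j,r_{j}}>0$ for $\varepsilon,d,e$ small, while $\Real(x^{k+p}A_{j,p})\ge\tfrac12\rho^{k+p}\Real A_{j,p}>0$ (again because $|\theta|$ is small), so the sum is $\ge\tfrac12\rho^{k+r_{j}}\Real A_{j,r_{j}}\ge\tfrac12\Real A_{j,r_{j}}\,\rho^{k+p}$ using $r_{j}\le p-1$ and $\rho<1$. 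In case (a) the leading part of $\Real(x^{k}d_{j}(x))$ vanishes and, since $r=p$, its total error is $\le C_{j}\,\delta\,\rho^{k+p}$, so
\[
\Real\bigl(x^{k}d_{j}(x)+x^{k+p}A_{j,p}\bigr)\ge\rho^{k+p}\Real A_{j,p}-C_{j}\,\delta\,\rho^{k+p}\ge\tfrac12\,\rho^{k+p}\Real A_{j,p}
\]
for $d,e$ small. In both cases we obtain a lower bound $c\,\rho^{k+p}$ for a suitable $c>0$; taking the minimum over the finitely many such $j$ and shrinking $d,e,\varepsilon$ accordingly finishes the proof.

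The one place I expect to need care is the bookkeeping of those error terms whose exponent is exactly $k+r_{j}$, which happens precisely when $\nu_{j}=0$ and $r=r_{j}$: these are not of strictly higher order in $\rho$, so they cannot be absorbed by shrinking $\varepsilon$ alone and must instead be killed by making $\delta$ small relative to the fixed constants $|\Real A_{j,r_{j}}|$, resp.\ $\Real A_{j,p}$. Keeping the smallness parameters in the right order --- first fix the coefficient-dependent constant $c$, then choose $d,e$ small, then $\varepsilon$ small --- is exactly what makes all the estimates close.
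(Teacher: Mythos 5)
Your proof is correct, and in the one delicate case it takes a genuinely different (and more elementary) route than the paper. Both arguments isolate the first coefficient $A_{j,r_j}$ with nonzero real part, absorb the terms of order greater than $r_j$ by shrinking $\varepsilon$, and when no purely imaginary coefficients occur below order $r_j$ the two proofs are essentially identical. They diverge in the treatment of those lower-order purely imaginary terms. The paper (following Lemma 5.9 of \cite{Lop-R-R-S}) conjugates $A_{j,\mu_j}x^{k+\mu_j}+\dots+A_{j,p-1}x^{k+p-1}$ into the single monomial $A_{j,\mu_j}\rho(x)^{k+\mu_j}$ by a tangent-to-the-identity polynomial change of variable, shows that for $d$ small the image $\rho(\petal)$ lies in a region where $\arg x \ge d'|x|^{r_j-\mu_j}$, and concludes that this purely imaginary leading block alone already has real part at most $-c_j|x|^{k+r_j}$. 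You instead bound $|\arg x|\le\max(d,e)\,|x|^{r}$ directly on $\petal$ and use $r\ge r_j$ (this is exactly where the exponent $r+1$ in the definition of $\petal$ enters) to treat every purely-imaginary-coefficient monomial as an error of size $O(\max(d,e)\,|x|^{k+r_j})$, which is then absorbed against $\Real A_{j,r_j}$ in (i), respectively against $\Real A_{j,p}>0$ (the eigenvalues of $C_2$ in the chosen coordinates) in (ii); your closing remark about the order of the quantifiers, constants first, then $d,e$, then $\varepsilon$, is precisely what makes this legitimate, since the borderline errors carry a factor $\max(d,e)$ rather than an extra power of $|x|$. The paper's route yields a sharper geometric fact (the imaginary block has a definite sign on the petal, with an explicit threshold for $d$) and stays parallel to the two-dimensional argument it cites; yours buys brevity and uniformity, avoiding the conjugation and the analysis of the image of the petal, at the harmless cost of choosing $d,e$ small in terms of the finitely many coefficients $A_{j,l}$, which the statement of the lemma permits.
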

\begin{proof}
Let us prove (i), the proof of (ii) is analogous.  Fix $2\le j\le s$. If $r_j=0$, we have that 
$$\Real(x^kd_j(x))\le \Real(A_{j,\mu_j}x^{k+\mu_j})/2\le -c_j|x|^{k+\mu_j}$$
if $d,e,\varepsilon$ are sufficiently small, where $-c_j=\Real(A_{j,\mu_j})/3$. If $r_j\ge 1$, we have that $k+\mu_j\ge1$ and we use the same argument of \cite[Lemma 5.9]{Lop-R-R-S}, that we include for the sake of completeness. We assume that $\Imag(A_{j,\mu_j})>0$; the other case is analogous. Using indeterminate coefficients we can see that there exists a diffeomorphism $\rho(x)=x+\sum_{l\ge2}\rho_lx^l$ such that
$$A_{j,\mu_j}x^{k+\mu_j}+\dots+A_{j,p-1}x^{k+p-1}=A_{j,\mu_j}\rho(x)^{k+\mu_j},$$
with $\rho_l\in\R$ if $2\le l\le r_j$ and $\Imag(\rho_{r_j+1})>0$.
Hence, to prove the inequality in (i),   it suffices to show that $\Real(A_{j,\mu_j}x^{k+\mu_j})\le -c_j|x|^{k+r_j+\mu_j}$ for some $c_j>0$ and for all $x\in\rho(\petal)$.
It is easy to show, using the fact that $\rho_l\in\R$ for $2\le l\le r_j$, that for any $a\in\R$, the image under $\rho$ of the curve 
$$\Imag x=a(\Real x)^{r_j+1}$$ is a curve of the form
$$C_a:\;\; \Imag x=(a+\Imag(\rho_{r_j+1}))(\Real x)^{r_j+1}+\dots$$
Then, since $r\ge r_j$, we obtain that set $\rho(\petal)$ is contained, if $\varepsilon$ is small enough, in a domain enclosed by two  curves of the type $C_e$ and $C_{-d}$. If $d$ is sufficiently small, then $-d+\Imag(\rho_{r_j+1})>0$, so $d'|x|^{r_j}<\arg x<\pi/(2(k+\mu_j))$ for some $d'>0$ and for all $x\in\rho(\petal)$, if $d,e,\varepsilon$ are small enough. Then, we have
\begin{align*}
\Real(A_{j,\mu_j}x^{k+\mu_j})&=-\Imag(A_{j,\mu_j})|x|^{k+\mu_j}\sin((k+\mu_j)\arg x)\\
&\le -\Imag(A_{j,\mu_j})|x|^{k+\mu_j}\sin((k+\mu_j)d'|x|^{r_j})
\end{align*}
so $\Real(A_{j,\mu_j}x^{k+\mu_j})\le -c_j|x|^{k+r_j+\mu_j}$, with $c_j=\Imag(A_{j,\mu_j})(k+\mu_j)d'/2$, if $d,e,\varepsilon>0$ are small enough. This proves (i).  
\end{proof}

Up to a linear change of coordinates $\zz^m\mapsto P\zz^m$, we can assume that the nonzero terms of the nilpotent part $N_2$ of the matrix $C_2$ are all equal to $c/2$, where $c>0$ is the constant appearing in Lemma~\ref{lem:nodaldomain}.

\subsection*{Existence of the stable manifold}
We prove here that for every $m\ge p+2$ there exists a stable manifold $\mathcal{S}_m$ of dimension $s$ given by a graph $\zz^m=\varphi_m(x,\ww^m)$ over a domain of the form
$$\basin=\left\{(x,\ww^m)\in\C\times\C^{s-1}: x\in\petal, \|\ww^m\|<|x|^{m-1}\right\}$$
where $d,e,\varepsilon>0$. As we will see, these stable manifolds are essentially the same for different values of $m$. In the proof, we can see that the contact of $\mathcal{S}_m$ with $\G$ increases with $m$. This will be key in the proof of asymptoticity of the orbits inside each $\mathcal{S}_m$.

We consider the vector spaces $\mathcal{C}(\basin,\C^{n-s})$  and $\mathcal{O}(\basin,\C^{n-s})$ of continuous and holomorphic maps respectively 
 from $\basin$ to $\C^{n-s}$ with the compact-open topology. Recall that 
since $\basin$ is a locally compact second countable space and  $\C^{n-s}$ is a complete metric space, $\mathcal{C}(\basin,\C^{n-s})$
is complete metrizable \cite[p. 272]{Dugundji}.
 
We will use the following result, which is an application of Schauder-Tychonoff theorem and is stated in \cite[p. 15]{Huk-K-M}. We include a proof for the sake of completeness.

\begin{proposition}\label{pro:fixedpointtheorem}
Given a continuous function $L:\basin\to\R_{\ge0}$, the set
$$\mathcal{H}_L=\left\{\varphi\in\mathcal{O}(\basin,\C^{n-s}): ||\varphi(x)||\le L(x) \text{ for all }(x,\ww^m)\in\basin\right\}$$
has the fixed point property; that is, every continuous map $T:\mathcal{H}_L\to\mathcal{H}_L$ has a fixed point.
\end{proposition}
\begin{proof}
The space $\mathcal{C}(\basin,\C^{n-s})$ is locally convex for the compact-open topology and the set $\mathcal{H}_L$ is clearly convex and closed. Moreover, by Montel theorem $\mathcal{H}_L$ is sequentially compact, and hence compact since $\mathcal{C}(\basin,\C^{n-s})$ is metrizable. By Schauder-Tychonoff theorem (see \cite{Dun-S}), every compact convex subset of a locally convex linear topological space has the fixed point property and this ends the proof.
\end{proof}

We define 
$$\spaceH=\left\{\varphi\in\mathcal{O}(\basin,\C^{n-s}): \left\|\varphi(x,\ww^m)\right\|\le |x|^{m-1} \text{ for all }(x,\ww^m)\in\basin\right\}.$$
The stable manifold $\mathcal{S}_m$ will be given by the graph of a fixed point $\varphi_m$ of a convenient continuous map $T:\spaceH\to\spaceH$.

Given $\varphi\in\spaceH$, we denote
$$f_\varphi(x,\ww^m)=f(x,\ww^m,\varphi(x,\ww^m)),\quad \overline{F}_{1,\varphi}(x,\ww^m)=\overline{F}_1(x,\ww^m,\varphi(x,\ww^m)).$$

\begin{proposition}\label{pro:basins}
If $d,e,\varepsilon>0$ are sufficiently small, then for all $\varphi\in\spaceH$ and $(x,\ww^m)\in\basin$ we have that
$$(f_{\varphi}(x,\ww^m),\overline{F}_{1,\varphi}(x,\ww^m))\in\basin.$$
\end{proposition}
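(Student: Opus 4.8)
The plan is to verify, for $d,e,\varepsilon>0$ small enough and every $\varphi\in\spaceH$, the two conditions that define $\basin$: that $x':=f_\varphi(x,\ww^m)$ lies in $\petal$, and that $\|\ol F_{1,\varphi}(x,\ww^m)\|<|x'|^{m-1}$. I will use repeatedly that on $\petal$, for small parameters, $|\arg x|\le\max(d,e)\,\varepsilon^{r}$ is as small as desired, so that $|x|\le 2\,\Real x$ and $\Real(x^{j})\ge\tfrac12|x|^{j}$ for $1\le j\le q+1$, where $q=k+p$; also, on $\basin$ one has $\|\ww^m\|<|x|^{m-1}$ and $\|\varphi(x,\ww^m)\|\le|x|^{m-1}$, with $m\ge p+2$. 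The two inputs are Lemma~\ref{lem:nodaldomain} and the Ramis--Sibuya form of $F$.

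\emph{The $x$-coordinate.} In Ramis--Sibuya form the $\yy^m$-dependence of $f$ is carried by a factor $x^{2q+2}$, so on $\basin$ we may write $x'=x-x^{q+1}+bx^{2q+1}+\rho(x,\ww^m)$ with $|\rho(x,\ww^m)|\le C|x|^{2q+2}$. From $|x'|^2=|x|^2-2|x|^2\Real(x^{q})+O(|x|^{2q+2})\le|x|^2-|x|^{q+2}+O(|x|^{2q+2})<|x|^2<\varepsilon^2$ we get $x'\neq0$ and $|x'|<\varepsilon$. The cusp conditions $-d(\Real x')^{r+1}<\Imag x'<e(\Real x')^{r+1}$ follow from the forward-invariance of $\petal$ under the parabolic germ $x\mapsto x-x^{q+1}+bx^{2q+1}+\cdots$, a one-dimensional computation carried out in detail in \cite{Lop-R-R-S}: the point is that $r\le q$, so the displacement $-x^{q+1}$ is of order at least that of the cusp curvature and, near the boundary, points toward the real axis, giving $\Imag x'-e(\Real x')^{r+1}\approx e(r-q)(\Real x)^{q+r+1}\le 0$; the extra term $\rho$, of much higher order in $x$, does not affect this. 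Hence $x'\in\petal$.

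\emph{The $\ww$-coordinate.} We may assume $p\ge1$ (for $p=0$ there are no node directions, $s=1$, and there is nothing to prove). Write $\ol F_{1,\varphi}(x,\ww^m)=M(x)\ww^m+R(x,\ww^m)$ with $M(x)=\exp\!\big(x^{k}(\ol D_1(x)+x^{p}C_1)\big)$; the Ramis--Sibuya form gives $\|R(x,\ww^m)\|\le C\big(|x|^{k+p+1}\|(\ww^m,\varphi)\|+|x|^{k+p+m}\big)\le C'|x|^{k+p+m}$ on $\basin$. Set $t=\max\{r_2,\dots,r_s\}<p$. Decompose $x^{k}(\ol D_1(x)+x^{p}C_1)=\Lambda(x)+N(x)$ with $\Lambda(x)$ diagonal, of entries $x^{k}d_j(x)+x^{k+p}(C_1)_{jj}$, and $\|N(x)\|=O(|x|^{k+p})$ (the off-diagonal Jordan part of $x^{p}C_1$). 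By Lemma~\ref{lem:nodaldomain}(i), $\Real\big(x^{k}d_j(x)\big)\le-c|x|^{k+t}$ for $2\le j\le s$, while $|x^{k+p}(C_1)_{jj}|=O(|x|^{k+p})=o(|x|^{k+t})$ because $t<p$; hence each diagonal entry of $\Lambda(x)$ has real part $\le-\tfrac{c}{2}|x|^{k+t}$ for small $\varepsilon$. Bounding the Euclidean operator norm by the logarithmic norm, $\|M(x)\|\le\exp\!\big(-\tfrac{c}{2}|x|^{k+t}+O(|x|^{k+p})\big)\le\exp\!\big(-\tfrac{c}{4}|x|^{k+t}\big)$ for small $\varepsilon$. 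Therefore on $\basin$
\[
\|\ol F_{1,\varphi}(x,\ww^m)\|\le\exp\!\big(-\tfrac{c}{4}|x|^{k+t}\big)\|\ww^m\|+C'|x|^{k+p+m}\le|x|^{m-1}\Big(\exp\!\big(-\tfrac{c}{4}|x|^{k+t}\big)+C'|x|^{k+p+1}\Big),
\]
while $|x'|^{m-1}\ge\big(|x|-2|x|^{k+p+1}\big)^{m-1}\ge|x|^{m-1}\big(1-C''|x|^{k+p}\big)$. Since $\exp(-\tfrac{c}{4}|x|^{k+t})\le 1-\tfrac{c}{4}|x|^{k+t}+O(|x|^{2(k+t)})$ and, by $t<p$, the exponent $k+t$ is strictly smaller than each of $k+p$, $k+p+1$ and $2(k+t)$ (the case $k+t=0$ being immediate, since then $\exp(-c/4)<1$ is a fixed constant), the term $-\tfrac{c}{4}|x|^{k+t}$ dominates and gives $\exp(-\tfrac{c}{4}|x|^{k+t})+C'|x|^{k+p+1}\le 1-C''|x|^{k+p}$ for small $\varepsilon$. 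Combining the displays, and noting the inequalities are strict (e.g.\ from $\|\ww^m\|<|x|^{m-1}$ if $\ww^m\neq0$, and from $k+p+m>m-1$ if $\ww^m=0$), we obtain $\|\ol F_{1,\varphi}(x,\ww^m)\|<|x'|^{m-1}$. Together with $x'\in\petal$ this is exactly $(f_\varphi(x,\ww^m),\ol F_{1,\varphi}(x,\ww^m))\in\basin$.

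\emph{The main obstacle} is the $\ww$-coordinate bound: one must extract the genuine node-direction contraction of $M(x)$ — quantified by Lemma~\ref{lem:nodaldomain}(i) and the strict inequality $t<p$ — while absorbing both the off-diagonal part of $x^{p}C_1$ and the remainder $R(x,\ww^m)$, and then check that this contraction beats the small loss in replacing $|x|^{m-1}$ by $|x'|^{m-1}$. It is precisely $t<p$, i.e.\ $|x|^{k+t}\gg|x|^{k+p}$ near $0$, that makes every error term negligible; the $x$-coordinate part, by contrast, is the standard forward-invariance of a cuspidal petal under a parabolic germ, imported from \cite{Lop-R-R-S}.
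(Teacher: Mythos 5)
Your proof is correct and follows essentially the same route as the paper: the $x$-coordinate is handled by the petal invariance argument imported from \cite{Lop-R-R-S}, and the $\ww$-coordinate bound rests on Lemma~\ref{lem:nodaldomain}(i) together with $t<p$, so that the contraction of order $|x|^{k+t}$ absorbs all $O(|x|^{k+p})$ errors, including the loss from replacing $|x|^{m-1}$ by $|f_\varphi|^{m-1}$. The only cosmetic difference is that you bound $\|\exp(x^k(\ol D_1(x)+x^pC_1))\|$ via a logarithmic-norm estimate, whereas the paper splits off $\exp(x^k\ol D_1(x))$ and treats $x^{k+p}C_1$ as an $O(x^{k+p})$ perturbation; both are equivalent in substance.
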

\begin{proof}
Since $r<k+p$, we can argue as in \cite[Lemma 5.5]{Lop-R-R-S}, and we obtain that $$f_\varphi(\basin)\subset \petal$$ if $d,e,\varepsilon$ are sufficiently small.
Now, if $\varphi\in\spaceH$ we have by Lemma~\ref{lem:nodaldomain} that
\begin{align*}
\frac{\left\|\overline{F}_{1,\varphi}(x,\ww^m)\right\|}{\left|f_\varphi(x,\ww^m)\right|^{m-1}} & =
\frac{\left\|\ww^m\left(\exp(x^k \ol D_1(x))+O(x^{k+p})\right) + O(x^{k+p+m})\right\|}{\left|x-x^{k+p+1}+ O(x^{2k+2p+1})\right|^{m-1}}\cr
&\leq \frac{\left\|\ww^m\right\|}{\left|x\right|^{m-1}}\left\|\exp (x^k \ol D_1(x))+O(x^{k+p})
\right\||1+ O(x^{k+p})|+\|O(x^{k+p+1})\|\cr
&\leq \frac{\left\|\ww^m\right\|}{\left|x\right|^{m-1}}\left(1-c|x|^{k+t} + \|O(x^{k+t+1})\|
\right) |1+ O(x^{k+p})| + \|O(x^{k+p+1})\| \cr
& < 1-c|x|^{k+t}+\|O(x^{k+t+1})\| < 1
\end{align*}
for all $(x,\ww^m)\in\basin$, if $d,e,\varepsilon$ are sufficiently small.
\end{proof}

We consider $0<\varepsilon<1$ and fix $d,e>0$ small enough so that Lemma~\ref{lem:nodaldomain} and Proposition~\ref{pro:basins} hold. Given $\varphi\in\spaceH$ and $(x_0,\ww^m_0)\in\basin$, we denote
$$(x_j,\ww^m_j)=\left(f_\varphi(x_{j-1},\ww^m_{j-1}),\overline{F}_{1,\varphi}(x_{j-1},\ww^m_{j-1})\right), \; j\ge 1.$$
As in the classical one-dimensional case, there exists a constant $K\ge1$ such that
\begin{equation}\label{equ:dynamics1}
\lim_{j\to\infty} (k+p)jx_j^{k+p}=1 \quad \text{ and } \quad |x_j|^{k+p}\le K\frac{|x_0|^{k+p}}{1+(k+p)j|x_0|^{k+p}}
\end{equation}
for all $(x_0, \ww^m_0)\in\basin$ and all $j\in\N$, so in particular $(x_j,\ww^m_j)\to0$ when $j\to\infty$. Therefore, $\varphi_m\in\spaceH$ is a solution of the equation
\begin{equation}\label{equ:invariance}
\varphi(f_\varphi(x,\ww^m),\overline{F}_{1,\varphi}(x,\ww^m))=\overline{F}_2(x,\ww^m,\varphi(x,\ww^m))
\end{equation}
if and only if the set
$$\mathcal{S}_m=\left\{(x,\ww^m,\varphi_m(x,\ww^m)):(x,\ww^m)\in\basin\right\}$$
is a stable manifold of $F$.

Set $\rho=0$ when $k\ge1$, and $\rho = b - (p+1)/2$ when $k=0$ (recall that $b$ is the coefficient of $x^{2p+1}$
in $f(x,0)$). We define

$$E(x)=\exp\left(-\int\frac{\ol D_2(x)+x^pC_2}{x^{p+1}(1-\rho x^p)}dx\right),$$
where the integral is a notation for a primitive of   $x^{-(p+1)}  (1-\rho x^p)^{-1} (\ol D_2(x)+x^pC_2)$.

\begin{lemma}\label{lem:EE-1}
For any $(x,\yy^m,\zz^m)\in \basin\times\{\zz^m\in\C^{n-s}:\|\zz^m\|\le|x|^{m-1}\}$ with $\varepsilon$ sufficiently small, we have
$$E(x)E(f(x,\ww^m,\zz^m))^{-1}=\exp(-x^k(\ol D_2(x)+x^pC_2))+O(x^{k+p+1}).$$
\end{lemma}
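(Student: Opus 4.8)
The plan is to reduce the statement to a one–variable computation and then to exploit the fact that all the matrices involved lie in a single commutative algebra.

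First I would record that, in the coordinates of this section, the first component $f$ of $F$ does not depend on $\ww^m$ or $\zz^m$: it is the component $x-x^{k+p+1}+bx^{2k+2p+1}+O(x^{2k+2p+2})$ of the Ramis--Sibuya form, which is a function of $x$ alone and is unaffected by the polynomial substitutions $\yy\mapsto\yy^m$. Hence the asserted identity is an identity in the single variable $x$, to be checked for $x\in\petal$ with $\varepsilon$ small; the constraint on $(\ww^m,\zz^m)$ only serves to locate $(x,\ww^m,\zz^m)$ inside the region where $E$ and the other objects are defined. Write $q=k+p$, $N(x)=x^{k}(\ol D_2(x)+x^{p}C_2)$ and $g(x)=(\ol D_2(x)+x^{p}C_2)\big/\bigl(x^{p+1}(1-\rho x^{p})\bigr)$, so that $E(x)=\exp(-G(x))$ for a holomorphic primitive $G$ of $g$ on the simply connected set $\petal$. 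Since $\ol D_2(x)$ is diagonal and, by $[D(x),C]=0$, commutes with $C_2$, all the matrices $\ol D_2(x)$ (for the various $x$) and $C_2$ lie in a fixed commutative subalgebra $\mathcal{A}$ of matrices; therefore $g(x)$, $G(x)$ and $N(x)$ take values in $\mathcal{A}$, the matrices $E(x)$ and $E(f(x))$ commute, and
\begin{equation*}
E(x)E(f(x))^{-1}=\exp\bigl(G(f(x))-G(x)\bigr).
\end{equation*}
A primitive $G$ may carry a term $L\log x$ with $L\in\mathcal{A}$, but its contribution to $G(f(x))-G(x)$ is $L\log\bigl(f(x)/x\bigr)$; as $f(x)/x=1-x^{q}+O(x^{q+1})$ this is a genuine holomorphic function near $0$ vanishing at the origin, so $h(x):=G(f(x))-G(x)$ is holomorphic near $0$, $\mathcal{A}$-valued, with $h(0)=0$.

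The core of the argument is the formal identity $h(x)=-N(x)+O(x^{q+1})$. To get it I would Taylor-expand $G$ at $x$,
\begin{equation*}
h(x)=g(x)\bigl(f(x)-x\bigr)+\tfrac12 g'(x)\bigl(f(x)-x\bigr)^{2}+\tfrac16 g''(x)\bigl(f(x)-x\bigr)^{3}+\cdots,
\end{equation*}
and use that $g$ has a pole of order at most $p+1$ at $0$ (at most $p$ when $\ol D_2(0)=0$) and that $f(x)-x=-x^{q+1}\bigl(1-bx^{q}+O(x^{q+1})\bigr)$. A short order count shows that $g^{(j)}(x)(f(x)-x)^{j+1}=O(x^{q+1})$ for all $j\ge2$, and also for $j=1$ unless $k=0$ and $\ol D_2(0)\ne0$. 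Using $q-p=k$, the first term equals
\begin{equation*}
g(x)\bigl(f(x)-x\bigr)=-N(x)\,\frac{1-bx^{q}+O(x^{q+1})}{1-\rho x^{p}},
\end{equation*}
which is $-N(x)+O(x^{q+1})$ whenever $k\ge1$ (then $\rho=0$ and $N(x)=O(x^{k})$ with $k\ge1$) and also when $k=0$ and $\ol D_2(0)=0$ (then $N(x)=O(x)$). In the remaining case $k=0$, $\ol D_2(0)\ne0$ (so $q=p$ and $N(0)=\ol D_2(0)\ne0$) one computes the coefficient of $x^{p}$ in $g(x)(f(x)-x)+\tfrac12 g'(x)(f(x)-x)^{2}$: the first summand contributes $(b-\rho)\ol D_2(0)$ on top of the coefficient of $x^{p}$ of $-N(x)$, while $\tfrac12 g'(x)(f(x)-x)^{2}\equiv-\tfrac{p+1}{2}\ol D_2(0)\,x^{p}$ modulo $x^{p+1}$ (here $g'(x)\sim-(p+1)\ol D_2(0)x^{-(p+2)}$ and $(f(x)-x)^{2}\sim x^{2p+2}$), so the total excess over $-N(x)$ at order $x^{p}$ is $\bigl(b-\rho-\tfrac{p+1}{2}\bigr)\ol D_2(0)=0$ precisely because $\rho=b-\tfrac{p+1}{2}$; all lower orders match $-N(x)$ automatically. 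Hence $h(x)=-N(x)+O(x^{q+1})$ in every case.

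Finally, since $h(x)$ and $N(x)$ are $\mathcal{A}$-valued, commutativity gives
\begin{equation*}
\exp\bigl(h(x)\bigr)=\exp\bigl(-N(x)\bigr)\exp\bigl(h(x)+N(x)\bigr)=\exp\bigl(-N(x)\bigr)\bigl(I+O(x^{q+1})\bigr),
\end{equation*}
and because $\exp(-N(x))$ is bounded for $|x|$ small we conclude
\begin{equation*}
E(x)E(f(x))^{-1}=\exp\bigl(-N(x)\bigr)+O(x^{q+1})=\exp\bigl(-x^{k}(\ol D_2(x)+x^{p}C_2)\bigr)+O(x^{k+p+1}),
\end{equation*}
which is the assertion; a final remark ensures that all the $O$-estimates hold uniformly on $\petal$ once $\varepsilon$ is small enough, using that $f$ maps $\petal$ into itself for $\varepsilon$ small and that $1-\rho x^{p}$ stays bounded away from $0$. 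I expect the only genuinely delicate point to be the bookkeeping in the case $k=0$, $\ol D_2(0)\ne0$: there the second-order Taylor term really does contribute at order $x^{q}$, and showing that it combines with the $b$-term of the first Taylor term to reproduce $-N(x)$ is exactly what forces the correction $\rho=b-\tfrac{p+1}{2}$, so this computation has to be carried out rather than waved away.
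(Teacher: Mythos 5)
Your argument is, in substance, the same computation as the paper's: the paper writes $E(x)E(x+x^{p+1}z)^{-1}=\exp\bigl(\int_0^z B(x+x^{p+1}u)(1+x^pu)^{-(p+1)}du\bigr)$ using commutativity and expands the integrand to second order, and after the change of variable $w=x+x^{p+1}u$ that integral is exactly your $G(f)-G(x)$; in both versions the whole point is the second-order term producing the factor $-\tfrac{p+1}{2}x^pz^2$ whose cancellation is what the choice $\rho=b-\tfrac{p+1}{2}$ (when $k=0$) is designed for, and you carry that computation out correctly, including the case split on $k$ and $\ol D_2(0)$.

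Two caveats, both repairable but worth fixing. First, your opening reduction is based on a false premise: in Ramis--Sibuya form the first component $f$ \emph{does} depend on $(\ww^m,\zz^m)$ through the $O(x^{2k+2p+2})$ terms (in the vector-field normal form the coefficient of $\partial/\partial x$ contains $x^{2q+2}A(x,\yy)$), which is why the paper writes $f(x,\yy^m)$ and why the lemma restricts to $(x,\ww^m)\in\basin$, $\|\zz^m\|\le|x|^{m-1}$. The role of that restriction is not merely to keep the point in the domain of $E$: since $m\ge p+2$ it gives $\|\yy^m\|\le|x|^{p+1}$, which is what makes the expansion $f-x=-x^{q+1}+bx^{2q+1}+O(x^{2q+2})$ hold \emph{uniformly} on the region. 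Your computation only uses this uniform expansion (together with $f(x,\ww^m,\zz^m)$ staying in a slit plane where $G$ is single-valued), so it survives verbatim once the premise is restated this way, but as written the reduction ``to an identity in the single variable $x$'' is not justified. Second, the ``short order count'' bounds each Taylor term $g^{(j)}(x)(f-x)^{j+1}/(j+1)!$ individually; to conclude that the full tail is $O(x^{q+1})$ you should sum the series, e.g. via Cauchy estimates on a disk of radius comparable to $|x|$ (the only singularity of $g$ being at $0$), which gives a geometric bound $C|x|^{k}(C'|x|^{q})^{j}$ and hence a tail of order $x^{k+2q}=O(x^{q+1})$. Neither point changes the structure of your proof.
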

\begin{proof} We argue as in \cite[Lemma 3.7]{Lop-S}. Observe that, since $\ol D_2(x)$ is diagonal and commutes with $C_2$, $E(x)$ is a fundamental solution of the linear system $x^{p+1}Y'=-B(x)Y$, where $B(x)=\left(\ol D_2(x)+x^pC_2\right)\left(1-\rho x^p\right)^{-1}$. Put
$\Omega(x,z)=E(x+x^{p+1}z)$. If we fix $x$ and consider $\Omega$ as a function of $z$ then it satisfies the (regular) system
$$\frac{\partial\Omega}{\partial z}=\frac{-B(x+x^{p+1}z)}{(1+x^{p}z)^{p+1}}\Omega(x,z).$$
On the other hand, we have $\Omega(x,0)=E(x)$ and thus
$$\Omega(x,z)=\exp\left(-\int_0^z\frac{B(x+x^{p+1}u)}{(1+x^{p}u)^{p+1}}du\right)E(x)$$
(using again that $\ol D_2(x)$ is diagonal and commutes with $C_2$). Hence
$$E(x)E(x+x^{p+1}z)^{-1}=E(x)\Omega(x,z)^{-1}=\exp\left(\int_0^z\frac{B(x+x^{p+1}u)}{(1+x^{p}u)^{p+1}}du\right).$$
The integrand in the equation above is an analytic function of $(x,u)$ and may be written as
$$\frac{B(x+x^{p+1}u)}{(1+x^{p}u)^{p+1}}=B(x)-(p+1)x^pB(x)u+O(x^{p+1}u, x^{2p}u^2).$$
Integrating, we obtain, for any $z$ sufficiently small,
$$E(x)E(x+x^{p+1}z)^{-1}=\exp\!\left(B(x)\Bigl(z-\frac{p+1}{2}x^pz^2\Bigr)\right)+x^{p+1}z^2\Lambda(x,z)+x^{2p}z^3\Theta(x,z),$$
where $\Lambda$ and $\Theta$ are analytic at the origin. The result follows using the expression of $f(x,\yy^m)$ and taking into account that $(x,\yy^m)\in\basin\times\{\|\zz^m\|\le|x|^{m-1}\}$, and thus $\|\yy^m\|\le|x|^{p+1}$, since $m\ge p+2$.
\end{proof}

\begin{lemma}\label{lem:sum-of-xj}
If $\varepsilon>0$ is small enough and, given $\varphi\in \spaceH$, we put $x_j=f_\varphi(x_{j-1},\ww^m_{j-1})$ and $\ww_j^m=\ol F_{1,\varphi}(x_{j-1},\ww^m_{j-1})$
for any $j\ge1$, then:
\begin{enumerate}[i)]
\item For any real number $l>k+p$ there exists a constant $K_l>0$ such that for any $(x_0,\ww_0)\in\basin$ and any $\varphi\in\spaceH$ we have
$$\sum_{j\geq 0}|x_j|^l\leq K_l|x_0|^{l-k-p}.$$
\item For any $(x_0,\ww_0)\in\basin$ and any $\varphi\in\spaceH$, we have $\|E(x_0)E(x_j)^{-1}\|\leq 1$ for every $j\ge0$.
\end{enumerate}
\end{lemma}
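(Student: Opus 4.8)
\emph{Approach.} Both items rest on the scalar contraction estimate (\ref{equ:dynamics1}) for the dynamics $x_{j}=f_\varphi(x_{j-1},\ww^m_{j-1})$, which holds uniformly over $(x_0,\ww_0)\in\basin$ and $\varphi\in\spaceH$. I would treat (i) first, then deduce (ii) from (i) via Lemma~\ref{lem:EE-1} and Lemma~\ref{lem:nodaldomain}.

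\emph{Item (i).} I would set $\alpha=l/(k+p)>1$ and $a=|x_0|^{k+p}$, and raise the inequality in (\ref{equ:dynamics1}) to the power $\alpha$ to obtain $|x_j|^l=(|x_j|^{k+p})^{\alpha}\le K^{\alpha}|x_0|^l\,(1+(k+p)j\,a)^{-\alpha}$. The task then reduces to bounding $\sum_{j\ge 0}(1+(k+p)j\,a)^{-\alpha}$. Since the summand is decreasing in $j$ and $\alpha>1$, comparing the tail with an integral gives
\[
\sum_{j\ge 0}(1+(k+p)j\,a)^{-\alpha}\le 1+\int_0^\infty(1+(k+p)t\,a)^{-\alpha}\,dt=1+\frac{1}{(k+p)(\alpha-1)\,a},
\]
and, since $a=|x_0|^{k+p}<\varepsilon^{k+p}<1$ (recall $q=k+p\ge1$ and $0<\varepsilon<1$) so that $1\le a^{-1}$, this is at most $\bigl(1+\tfrac1{(k+p)(\alpha-1)}\bigr)|x_0|^{-(k+p)}$. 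Multiplying the two bounds yields $\sum_{j\ge 0}|x_j|^l\le K_l|x_0|^{l-k-p}$ with $K_l=K^{\alpha}\bigl(1+\tfrac1{(k+p)(\alpha-1)}\bigr)$. This step is a routine integral comparison.

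\emph{Item (ii).} I would telescope $E(x_0)E(x_j)^{-1}=\prod_{i=1}^{j}E(x_{i-1})E(x_i)^{-1}$ and estimate each factor by Lemma~\ref{lem:EE-1}: since $\varphi\in\spaceH$ gives $\|\varphi(x_{i-1},\ww^m_{i-1})\|\le|x_{i-1}|^{m-1}$, that lemma applies with $x=x_{i-1}$ and yields $E(x_{i-1})E(x_i)^{-1}=\exp(-x_{i-1}^k(\ol D_2(x_{i-1})+x_{i-1}^pC_2))+O(x_{i-1}^{k+p+1})$. Writing $x^k(\ol D_2(x)+x^pC_2)=\diag(\delta_j(x))+x^{k+p}N_2$ with $\delta_j(x)=x^kd_j(x)+x^{k+p}A_{j,p}$, the relation $[\ol D(x),C]=0$ together with the Jordan form of $C_2$ shows that $\diag(\delta_j(x))$ and $N_2$ commute, so in the Euclidean operator norm
\[
\bigl\|\exp(-x^k(\ol D_2(x)+x^pC_2))\bigr\|\le\Bigl(\max_j e^{-\Real\delta_j(x)}\Bigr)e^{|x|^{k+p}\|N_2\|}\le e^{-c|x|^{k+p}}e^{(c/2)|x|^{k+p}}=e^{-(c/2)|x|^{k+p}},
\]
using Lemma~\ref{lem:nodaldomain}(ii) for the first factor and the normalization $\|N_2\|\le c/2$ for the second. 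Then $e^{-(c/2)u}\le1-\tfrac c2u+\tfrac{c^2}{8}u^2$ together with $k+p\ge1$ gives $\|E(x_{i-1})E(x_i)^{-1}\|\le1-\tfrac c2|x_{i-1}|^{k+p}+C'|x_{i-1}|^{k+p+1}$, where $C'$ absorbs the $O$-term of Lemma~\ref{lem:EE-1}; shrinking $\varepsilon$ makes the right-hand side $\le1$. Submultiplicativity of the operator norm then gives $\|E(x_0)E(x_j)^{-1}\|\le\prod_{i=1}^j\|E(x_{i-1})E(x_i)^{-1}\|\le1$, with $j=0$ trivial.

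\emph{Main obstacle.} The only delicate point is in (ii): the growth $e^{(c/2)|x|^{k+p}}$ coming from the nilpotent part of $C_2$ must be strictly dominated by the decay $e^{-c|x|^{k+p}}$ provided by the saddle lower bound of Lemma~\ref{lem:nodaldomain}(ii). This is exactly why the coordinates were chosen so that the nonzero entries of $N_2$ equal $c/2$, and why one must work in the Euclidean operator norm, which gives both submultiplicativity and the block-diagonal estimate $\|\diag(e^{-\delta_j(x)})\|=\max_j e^{-\Real\delta_j(x)}$; one should also check that the $O(x^{k+p+1})$ error in Lemma~\ref{lem:EE-1} is lower order than the $|x|^{k+p}$ gain (since $k+p\ge1$) and hence harmless after shrinking $\varepsilon$, and record that all constants are uniform in $(x_0,\ww_0)\in\basin$ and $\varphi\in\spaceH$ because the corresponding $O$-terms and the constant $K$ in (\ref{equ:dynamics1}) are.
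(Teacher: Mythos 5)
Your proof is correct and follows essentially the same route as the paper: part (i) from the uniform estimate \eqref{equ:dynamics1} (the paper simply cites Hakim's Corollary 4.3, you carry out the routine integral comparison), and part (ii) by telescoping $E(x_0)E(x_j)^{-1}$, applying Lemma~\ref{lem:EE-1} factor by factor, splitting off the diagonal part $\mathscr D$ from the nilpotent part $N_2$ normalized to $c/2$, and invoking Lemma~\ref{lem:nodaldomain}(ii). The only cosmetic difference is that the paper expands $\exp(-x^{k+p}N_2)=I-x^{k+p}N_2+O(x^{k+p+1})$ to get the bound $1-(c-c/2)|x|^{k+p}+M_2|x|^{k+p+1}$, whereas you bound the same factor by $e^{(c/2)|x|^{k+p}}$ using the (correctly justified) commutation of $\diag(\delta_j)$ with $N_2$; both yield the same conclusion.
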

\begin{proof}
Part (i) follows from equation \eqref{equ:dynamics1}, as in \cite[Corollary 4.3]{Hak}.
To prove part (ii), observe that by Lemma~\ref{lem:EE-1}
$$E(x_0)E(x_1)^{-1}=\exp\left(-x_0^k(\ol D_2(x_0)+x_0^pC_2)\right)+\theta_{\varphi}(x_0,\ww^m_0),$$
where $\|\theta_\varphi(x_0,\ww^m_0)\|\le M_1|x_0|^{k+p+1}$ for any $(x_0,\ww^m_0)\in\basin$ and any $\varphi\in\spaceH$, with some $M_1>0$ independent of $\varphi$. We have that
$$\exp\left(-x^k(\ol D_2(x)+x^pC_2)\right)=\mathscr{D}\exp(-x^{k+p}N_2)=\mathscr{D}\left[I-x^{k+p}N_2+O(x^{k+p+1})\right],$$
where
$$\mathscr{D}=\diag\left(\exp\left(-x^kd_{s+1}(x)-x^{k+p}A_{s+1,p}\right),...,\exp\left(-x^kd_n(x)-x^{k+p}A_{n,p}\right)\right).$$
Then, using Lemma~\ref{lem:nodaldomain} and the fact that all the nonzero terms of $N_2$ are equal to $c/2$, we obtain
$$\left\|E(x_0)E(x_1)^{-1}\right\|\le 1-(c-c/2)|x_0|^{k+p}+M_2|x_0|^{k+p+1}\le1$$
for all $(x_0,\ww^m_0)\in\basin$ if $\varepsilon>0$ is sufficiently small. We obtain the result writing $E(x_0)E(x_j)^{-1}=\prod_{l=0}^{j-1} E(x_{l})E(x_{l+1})^{-1}$.
\end{proof}

Define
$$H(x,\ww^m,\zz^m)=\zz^m-E(x)E(f(x,\ww^m,\zz^m))^{-1}\overline{F}_2(x,\ww^m,\zz^m).$$
Using Lemma~\ref{lem:EE-1} we get
\begin{equation}\label{eq:H2}
H(x,\ww^m,\zz^m)=O(x^{k+p+1}\|\yy^m\|,x^{k+p+m}).
\end{equation}

\begin{proposition}\label{pro:fixedpoint}
If $\varepsilon>0$ is sufficiently small and we denote $x_j=f_\varphi(x_{j-1},\ww^m_{j-1})$ and $\ww^m_j=\overline{F}_{1,\varphi}(x_{j-1},\ww^m_{j-1})$ for any $j\ge1$, where $(x_0,\ww^m_0)\in\basin$ and $\varphi\in\spaceH$, then the series
$$T\varphi(x_0,\ww^m_0)=\sum\limits_{j\ge0}E(x_0)E(x_j)^{-1}H(x_j,\ww^m_j,\varphi(x_j,\ww^m_j))$$
is normally convergent and defines a map $T:\varphi\mapsto
T\varphi$ from $\spaceH$ to itself which is continuous for the topology of the uniform convergence. Moreover, $\varphi\in\spaceH$
is a fixed point of $T$ if and only if the set $\{(x,\ww^m,\varphi(x,\ww^m):(x,\ww^m)\in\basin\}$ is a stable manifold of $F$.
\end{proposition}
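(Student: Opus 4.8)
The plan is to verify the three assertions in turn: normal convergence of the series defining $T\varphi$, invariance of $\spaceH$ under $T$ together with continuity, and the equivalence between fixed points of $T$ and stable manifolds. For the convergence, I would start from the two estimates available from the previous lemmas: the bound $\|E(x_0)E(x_j)^{-1}\|\le 1$ from Lemma~\ref{lem:sum-of-xj}(ii), and the bound \eqref{eq:H2} on $H$, which for $\varphi\in\spaceH$ (so $\|\varphi(x_j,\ww^m_j)\|\le|x_j|^{m-1}$ and $\|\ww^m_j\|<|x_j|^{m-1}$) gives $\|H(x_j,\ww^m_j,\varphi(x_j,\ww^m_j))\|\le M|x_j|^{k+p+m}$ for a constant $M>0$ independent of $\varphi$ and of $(x_0,\ww^m_0)$. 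Hence the general term is bounded by $M|x_j|^{k+p+m}$; since $k+p+m>k+p$ (recall $m\ge p+2\ge2$), Lemma~\ref{lem:sum-of-xj}(i) applies with $l=k+p+m$ and yields
\[
\|T\varphi(x_0,\ww^m_0)\|\le M\sum_{j\ge0}|x_j|^{k+p+m}\le MK_{k+p+m}|x_0|^{m},
\]
which is both a proof of normal convergence on $\basin$ and, after possibly shrinking $\varepsilon$ so that $MK_{k+p+m}|x_0|\le 1$ on $\petal$, the bound $\|T\varphi(x_0,\ww^m_0)\|\le|x_0|^{m-1}$; this shows $T(\spaceH)\subset\spaceH$. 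Continuity on $\basin$ and the boundary $\basinclosure$ is inherited from the continuity of each summand and the uniform (geometric-type) tail bound just obtained, and holomorphy in $\basin$ follows because a normally convergent series of holomorphic maps is holomorphic.

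For the continuity of $T$ with respect to the uniform norm on $\spaceH$, I would fix $\varphi,\psi\in\spaceH$ and compare term by term. The difficulty is that the base-point orbits $(x_j,\ww^m_j)$ differ for $\varphi$ and $\psi$; I would denote them $(x_j,\ww^m_j)$ and $(x_j',\ww_j^{m\prime})$. Splitting the difference of the $j$-th terms into (a) the difference coming from $H$ evaluated at the same point with $\varphi$ vs.\ $\psi$ in the last slot, and (b) the difference coming from moving the base point along the $\varphi$-orbit vs.\ the $\psi$-orbit, one estimates (a) by the local Lipschitz constant of $H$ in its last variable times $\|\varphi-\psi\|$, and one controls (b) by a discrete Gronwall-type argument showing that $|x_j-x_j'|$ and $\|\ww_j^m-\ww_j^{m\prime}\|$ are, for $j$ up to some cutoff, at most a constant times $\|\varphi-\psi\|$ times a power of $|x_0|$, while for large $j$ the factor $\|E(x_0)E(x_j)^{-1}\|\le1$ together with the summability from Lemma~\ref{lem:sum-of-xj}(i) keeps the tails small. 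Combining, $\|T\varphi-T\psi\|\le C\|\varphi-\psi\|^{\alpha}$ for suitable constants (in fact one gets genuine continuity, and this is all Schauder's theorem will need); I will not push for a contraction, since that is exactly what the authors say they avoid.

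Finally, for the equivalence, I would argue as follows. If $\varphi=T\varphi$, then evaluating the series at $(f_\varphi(x,\ww^m),\overline F_{1,\varphi}(x,\ww^m))$ and comparing with the series at $(x,\ww^m)$ — using the telescoping identity $E(x)E(x_{j+1})^{-1}=\bigl(E(x)E(x_1)^{-1}\bigr)\bigl(E(x_1)E(x_{j+1})^{-1}\bigr)$ and the definition of $H$ — shows that $\varphi$ satisfies the invariance equation \eqref{equ:invariance}; conversely, if $\varphi\in\spaceH$ solves \eqref{equ:invariance}, then iterating the relation $\varphi(x,\ww^m)=E(x)E(x_1)^{-1}\overline F_2(x,\ww^m,\varphi(x,\ww^m))+H(x,\ww^m,\varphi(x,\ww^m))$ exactly $j$ times, using $\varphi(x_j,\ww^m_j)\to0$ and $\|E(x_0)E(x_j)^{-1}\|\le1$, gives $\varphi=T\varphi$ in the limit. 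That $\varphi$ solves \eqref{equ:invariance} if and only if $\mathcal S_m$ is a stable manifold of $F$ was already recorded in the discussion preceding Lemma~\ref{lem:EE-1} (it follows from Proposition~\ref{pro:basins}, which guarantees $\basin$ is forward-invariant for $(f_\varphi,\overline F_{1,\varphi})$, together with \eqref{equ:dynamics1}, which forces the orbits to converge to $0$). The main obstacle is the base-point-dependence in the continuity estimate for $T$; handling it requires the Gronwall-type stability of the orbits on the petal, but this is manageable precisely because each $E(x_0)E(x_j)^{-1}$ is a contraction and $\sum_j|x_j|^l$ is summable for $l>k+p$.
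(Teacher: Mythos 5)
Your proposal is correct and follows essentially the same route as the paper: the bound $\|H(x_j,\ww^m_j,\varphi(x_j,\ww^m_j))\|\le M|x_j|^{k+p+m}$ from \eqref{eq:H2} combined with Lemma~\ref{lem:sum-of-xj} gives normal convergence and $\|T\varphi\|\le MK_{k+p+m}|x|^{m}\le|x|^{m-1}$ for small $\varepsilon$, and the fixed-point/invariance equivalence is exactly the paper's telescoping identity $T\varphi(x_0,\ww^m_0)=\varphi(x_0,\ww^m_0)-E(x_0)E(x_1)^{-1}\bigl[\ol F_2(x_0,\ww^m_0,\varphi(x_0,\ww^m_0))-T\varphi(x_1,\ww^m_1)\bigr]$ in one direction and the telescoped partial sums with $\|E(x_0)E(x_{N+1})^{-1}\|\le1$, $\|\varphi(x_{N+1},\ww^m_{N+1})\|\le|x_{N+1}|^{m-1}\to0$ in the other, with the stable-manifold characterization taken, as in the paper, from the discussion around \eqref{equ:invariance}. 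The only divergence is cosmetic: for continuity of $T$ the paper simply invokes uniform convergence of the series with respect to $\varphi$ (termwise continuity of finitely many compositions plus the $\varphi$-uniform tail bound from \eqref{equ:dynamics1}), so your Gronwall-type orbit comparison, while workable, is more machinery than is needed.
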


\begin{proof}
If $\varphi\in\spaceH$ and $(x_0,\ww^m_0)\in\basin$, then by equation~\eqref{eq:H2} we have that $\left\|H(x_j,\ww^m_j,\varphi(x_j,\ww^m_j))\right\|\le M|x_j|^{k+p+m}$ for some $M>0$, so by Lemma~\ref{lem:sum-of-xj} we get
$$\left\|T\varphi(x_0,\ww^m_0)\right\|\le M\sum\limits_{j\ge0}|x_j|^{k+p+m}$$
and the series is normally convergent by Lemma~\ref{lem:sum-of-xj}. Moreover we have that
$\left\|T\varphi(x,\ww^m)\right\|\le MK_{k+p+m}|x|^m\le |x|^{m-1}$ if $\varepsilon>0$ is sufficiently small, so $T\varphi\in\spaceH$. Continuity of $ T$ follows from the uniform convergence of the series with respect to $\vp$. Finally, we rewrite
\begin{align*}
	T\varphi(x_0,\ww^m_0)&=E(x_0)\sum_{j\ge0}\left[E(x_j)^{-1}\varphi(x_j,\ww^m_j)-E(x_{j+1})^{-1}\ol F_2\left(x_j,\ww^m_j,\varphi(x_j,\ww^m_j)\right)\right]\\
	&=\varphi(x_0,\ww^m_0)-E(x_0)E(x_1)^{-1}\left[\ol F_2\left(x_0,\ww_0^m,\varphi(x_0,\ww^m_0)\right)-T\varphi(x_1,\ww^m_1)\right].
\end{align*}
From these two equalities it follows that $\varphi$ is a fixed point of $T$ if and only if $\varphi$ satisfies the invariance equation~\eqref{equ:invariance}, i.e. if and only if the set $\{(x,\ww^m,\varphi(x,\ww^m):(x,\ww^m)\in\basin\}$ is a stable manifold of $F$.
\end{proof}

By Proposition~\ref{pro:fixedpointtheorem}, $T$ has a fixed point $\varphi_m\in\spaceH$. Hence, by Proposition~\ref{pro:fixedpoint}, the set $$\mathcal{S}_m=\{(x,\ww^m,\varphi_m(x,\ww^m)):(x,\ww^m)\in \basin\}$$ is a stable manifold of $F$.

\subsection*{Stable manifold as a base of asymptotic convergence}

Let us show that every orbit $\{(x_j,\yy^m_j)\}$ of $F$ which is asymptotic to $\G$ and such that $\{x_j\}$ has $\R^+$ as tangent direction is eventually contained in $\mathcal{S}_m$. Since the order of contact of $\G$ with the $x$-axis is at least $p+m$, any orbit $\{(x_j,\yy^m_j)\}$ asymptotic to $\G$ satisfies $\|\yy^m_j\|<|x_j|^{p+m-1}$ if $j$ is sufficiently large. Therefore, the result is a consequence of the following lemma.

\begin{lemma}\label{lem:base-asymp}
Let $\{(x_j,\ww^m_j,\zz^m_j)\}$ be a stable orbit of $F$ such that $\{x_j\}$ has $\R^+$ as tangent direction and such that $\|\ww^m_j\|<|x_j|^{m-1}$ for all $j$ sufficiently big. Then $(x_j,\ww^m_j,\zz^m_j)\in \mathcal{S}_m$ for all $j$ sufficiently big.
\end{lemma}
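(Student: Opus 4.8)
The plan is to prove that for all $j$ larger than some $j_0$ one has $\zz^m_j=\varphi_m(x_j,\ww^m_j)$, i.e. $(x_j,\ww^m_j,\zz^m_j)\in\mathcal{S}_m$; since $j_0$ may be chosen arbitrarily large this is enough. The first step is to check that the orbit is eventually contained, and in fact well inside, the domain $\basin$. That $x_j\in\petal$ for $j$ large is a consequence of the one-dimensional parabolic dynamics of $f(x,0)$ and of the hypothesis that $\{x_j\}$ is tangent to $\R^+$, exactly as in the corresponding step of \cite{Lop-R-R-S}; after possibly shrinking $d,e,\varepsilon$ one may even assume that $x_j$ eventually stays in a forward-invariant sub-petal $R_{d',e',\varepsilon}$ whose closure minus the origin is contained in $\petal$. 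For the $\ww^m$-component, the hypothesis $\|\ww^m_j\|<|x_j|^{m-1}$ together with the strict contraction in the $\ww^m$-direction obtained in the proof of Proposition~\ref{pro:basins} (which persists for the genuine $F$-orbit, the $\zz^m$-dependence of $\overline{F}_1$ being a negligible perturbation since $\zz^m_j\to0$) gives $\|\ww^m_j\|\le\tfrac12|x_j|^{m-1}$ for $j$ large. Consequently, for $j\ge j_0$ the whole forward orbit $\{(x_{j+i},\ww^m_{j+i})\}_{i\ge0}$ lies in $\basin$, and around each of its points there is a polydisc of polyradius of order $(|x_{j+i}|^{r+1},|x_{j+i}|^{m-1})$ still contained in $\basin$, so that by Cauchy's inequality and $\|\varphi_m\|\le|x|^{m-1}$ one gets $\|D\varphi_m\|=O(1)$ along the orbit.

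The core of the argument is a monotonicity estimate along the expanding $\zz^m$-direction, which replaces the uniqueness statement missing in Schauder's theorem. Fix $j\ge j_0$, write $(\xi_i,\eta_i,\zeta_i):=(x_{j+i},\ww^m_{j+i},\zz^m_{j+i})$ and put $v_i:=\zeta_i-\varphi_m(\xi_i,\eta_i)$, noting that $\varphi_m$ is evaluated at the genuine orbit values $(\xi_i,\eta_i)\in\basin$. Since $\varphi_m$ is a fixed point of $T$, it satisfies the invariance equation~\eqref{equ:invariance}, i.e. $\overline{F}_2(\xi_i,\eta_i,\varphi_m(\xi_i,\eta_i))=\varphi_m\big(f(\xi_i,\eta_i,\varphi_m(\xi_i,\eta_i)),\overline{F}_1(\xi_i,\eta_i,\varphi_m(\xi_i,\eta_i))\big)$. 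Subtracting this from $\zeta_{i+1}=\overline{F}_2(\xi_i,\eta_i,\zeta_i)$ and rewriting $\varphi_m(\xi_{i+1},\eta_{i+1})=\varphi_m\big(f(\xi_i,\eta_i,\zeta_i),\overline{F}_1(\xi_i,\eta_i,\zeta_i)\big)$, one obtains $v_{i+1}=M(\xi_i)v_i+\varepsilon_i$, with $M(x)=\exp\big(x^k(\overline{D}_2(x)+x^pC_2)\big)$ and $\|\varepsilon_i\|\le C|\xi_i|^{k+p+1}\|v_i\|$; here the bound on $\varepsilon_i$ combines $\|D\varphi_m\|=O(1)$ with the fact that, in the coordinates obtained after sufficiently many blow-ups, the $\zz^m$-partial derivatives of $f$, $\overline{F}_1$ and of the remainder term of $\overline{F}_2$ are $O(x^{k+p+1})$ on $\basin$. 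By Lemma~\ref{lem:nodaldomain}(ii) and the normalization of the nilpotent part $N_2$ of $C_2$ one has $\|M(x)^{-1}\|\le1-\tfrac c2|x|^{k+p}+O(|x|^{k+p+1})$ on $\petal$. Writing $v_i=M(\xi_i)^{-1}(v_{i+1}-\varepsilon_i)$ and absorbing the $\varepsilon_i$-term into the exponential gain, this yields, for $j_0$ large enough, $\|v_i\|\le\big(1-\tfrac c4|\xi_i|^{k+p}\big)\|v_{i+1}\|$ for every $i\ge0$; in particular $v_{i+1}=0$ implies $v_i=0$, and conversely (running the recursion forward, where $\varepsilon_i$ vanishes when $v_i=0$) $v_i=0$ implies $v_{i+1}=0$.

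To conclude, observe that $v_i\to0$ as $i\to\infty$: indeed $\zeta_i=\zz^m_{j+i}\to0$ because the orbit is stable, and $\|\varphi_m(\xi_i,\eta_i)\|\le|\xi_i|^{m-1}\to0$ because $(\xi_i,\eta_i)\in\basin$ and $\varphi_m\in\spaceH$. By the dichotomy above, either $v_i=0$ for all $i$, or $v_i\neq0$ for all $i$ and $(\|v_i\|)_{i\ge0}$ is strictly increasing; the latter is incompatible with $v_i\to0$. Hence $v_i=0$ for all $i$, and taking $i=0$ gives $\zz^m_j=\varphi_m(x_j,\ww^m_j)$, that is $(x_j,\ww^m_j,\zz^m_j)\in\mathcal{S}_m$. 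As $j\ge j_0$ is arbitrary, the lemma follows.

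The step I expect to be most delicate is the first one, namely that the orbit really enters the thin petal $\petal$ — whose imaginary width near $0$ is only of order $|x|^{r+1}$, with $r=\max_j r_j$ — and stays well inside it. Tangency of $\{x_j\}$ to $\R^+$ by itself controls $\arg x_j$ only up to $o(1)$, while membership in $\petal$ is sharper; this is exactly where the definition of the first asymptotic significant orders $r_j$, the role of the factor $1-\rho x^p$ in $E(x)$ when $k=0$, and the reductions by punctual blow-ups behind the choice of coordinates are needed, paralleling the (more involved) analysis of \cite{Lop-R-R-S}. A secondary technical point, already implicit in the choice of coordinates, is to make the discrepancy between the true $F$-orbit and the graph-orbit of $\varphi_m$ genuinely of order $o(|x|^{k+p})$, which is what forces the preliminary change of variables by a high-order jet of a parametrization of $\G$ and the passage to coordinates obtained after sufficiently many punctual blow-ups (large $N$ and large $m$).
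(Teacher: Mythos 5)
Your overall strategy is the one the paper uses: subtracting the graph (your $v_i=\zeta_i-\varphi_m(\xi_i,\eta_i)$ is exactly the paper's change of coordinates $\zz^m\mapsto\zz^m-\varphi_m(x,\ww^m)$), exploiting the invariance equation~\eqref{equ:invariance}, and using the expansion in the $\zz^m$-direction coming from Lemma~\ref{lem:nodaldomain}(ii) and the normalization of $N_2$ to contradict convergence to the origin unless $v_i\equiv 0$. That core is correct, and your extra care about Lipschitz bounds for $\varphi_m$ addresses a point the paper passes over silently.

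There is, however, a concrete gap in your chain of justifications: the claim that $\|\ww^m_j\|\le\tfrac12|x_j|^{m-1}$ for large $j$ because ``the $\zz^m$-dependence of $\overline{F}_1$ is a negligible perturbation since $\zz^m_j\to0$''. At that stage nothing in the hypotheses bounds $\|\zz^m_j\|$ by any power of $|x_j|$ (stability only gives $\zz^m_j\to0$, with no rate), so the perturbation of the $\ww^m$-component per step, which is of size $O(|x_j|^{k+p+1}\|\zz^m_j\|)$, need not be small compared with the reference scale $|x_j|^{m-1}$, nor with the contraction rate $c|x_j|^{k+t}$ (already for $m=p+2$ and $t\ge1$, and a fortiori for the larger values of $m$ used in the bootstrap). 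Since this $\tfrac12$-improvement is what places the orbit well inside $\basin$ in the $\ww^m$-direction, your Cauchy estimate $\|D\varphi_m\|=O(1)$ along the orbit, and hence the constant $C$ in $\|\varepsilon_i\|\le C|\xi_i|^{k+p+1}\|v_i\|$, are not justified for orbits whose $\ww^m$-component hugs the boundary $\|\ww^m\|=|x|^{m-1}$. The gap is repairable with the same expansion idea run first in the \emph{original} coordinates: using only $\|\ww^m_j\|<|x_j|^{m-1}$, one has $\|\zz^m_{j+1}\|\ge\bigl(1+\tfrac c4|x_j|^{k+p}\bigr)\|\zz^m_j\|-C'|x_j|^{k+p+m}$, so if $\|\zz^m_{j_1}\|\ge \tfrac{16C'}{c}|x_{j_1}|^{m}$ for some large $j_1$ then $\|\zz^m_j\|$ is eventually nondecreasing, contradicting stability; hence $\|\zz^m_j\|=O(|x_j|^m)$ for large $j$, after which your ``negligible perturbation'' step, the interior Cauchy estimates and the $v_i$-recursion all go through. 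The paper's own proof avoids this detour by performing the change of variables and applying the expansion estimate directly, without ever needing to improve the bound on $\ww^m_j$.
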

\begin{proof}
Since $\{x_j\}$ has $\R^+$ as tangent direction, we obtain, arguing exactly as in \cite[Lemma 5.8]{Lop-R-R-S}, that $x_j\in \petal$ if $j$ is sufficiently big and hence $(x_j,\ww^m_j)\in \basin$ for all $j\ge j_0$. Consider the change of coordinates $\zz^m\mapsto\zz^m-\varphi_m(x,\ww^m)$, valid on  $\basin\times\C^{n-s}$. In the new coordinates the stable manifold $\mathcal{S}_m$ is given by $\zz^m=0$ and hence $F$ is written as
\begin{align*}
	f(x,\yy^m)&=x-x^{k+p+1}+  b x^{2k+2p+1} + O(x^{2k+2p+2})  \\[2pt]
	\ol F_1(x,\yy^m)&=\exp\left(x^k\left(\ol D_1(x)+x^pC_1\right)\right)\ww^m+O(x^{k+p+1}\|\yy^m\|,x^{k+p+m})\\[2pt]
	\ol F_2(x,\yy^m)&=\exp\left(x^k\left(\ol D_2(x)+x^pC_2\right)\right)\zz^m+O(x^{k+p+1}\|\zz^m\|).
\end{align*}
By Lemma~\ref{lem:nodaldomain} we obtain
$$\|\ol F_2(x_j,\yy_j^m)\|\ge \left(1+c|x_j|^{k+p}+O(x_j^{k+p+1})\right)\|\zz_j^m\|\ge\|\zz_j^m\|$$
for all $j\ge j_0$, so we conclude that if $\zz^m_{j_0}\neq0$ the orbit $\{(x_j,\ww^m_j,\zz^m_j)\}$ cannot converge to the origin. Therefore, $(x_j,\ww^m_j,\zz^m_j)\in \mathcal{S}_m$ for any $j\ge j_0$.
\end{proof}

\begin{remark}\label{rk:unicity-vpm}
Note that Lemma~\ref{lem:base-asymp} also implies that $\varphi_m$ is actually the unique fixed point of $T$ in $\spaceH$.
\end{remark}

\subsection*{Asymptoticity of the orbits}

To finish the proof of Theorem~\ref{th:stablemanifold} it only remains to prove that every orbit in $\mathcal{S}_m$ is asymptotic to $\G$. Observe that, since the order of contact of $\G$ with the $x$-axis is at least $p+m$ and the order of contact of $\mathcal{S}_m$ with the $x$-axis is at least $m-1$, the order of contact of $\mathcal{S}_m$ with $\G$ is at least $m-1$. We will show that every orbit $\{(x_j,\yy^m_j)\}\subset\mathcal{S}_m$, which has order of contact at least $m-1$ with $\G$, is eventually contained in $\mathcal{S}_{m+1}$, and therefore its order of contact with $\G$ is at least $m$. Applying this argument recursively, we conclude that every orbit in $\mathcal{S}_m$ is asymptotic to $\G$.

\begin{lemma}\label{lem:asympt}
Fix $\varepsilon,d,e>0$ sufficiently small. Let $\{(x_j,\ww^m_j,\zz^m_j)\}$ be a stable orbit of $F$ such that $x_j\in\petal$ and $\|\zz^m_j\|<|x_j|^{m-1}$ for all $j$. Then $\|\ww^m_j\|<\frac 12|x_j|^m$ for all $j$ sufficiently large.
\end{lemma}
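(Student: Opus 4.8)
The plan is to derive a scalar difference inequality for the quantity $u_j:=\|\ww^m_j\|/|x_j|^m$ and to prove that $u_j\to0$, which is far stronger than the asserted bound $u_j<\tfrac12$. If $s=1$ there are no node variables and $\ww^m$ is the empty tuple, so the statement is vacuous; hence I assume $s\ge2$, so that $t=\max\{r_2,\dots,r_s\}<p$ is defined. Since the orbit is stable, $(x_j,\yy^m_j)\to0$, so beyond some index $j_*$ it lies in a fixed polydisc about $0$ on which all the remainder terms in the expression of $F$ are dominated by a single constant $M_1$; I work only with such $j$. Observe that the hypothesis gives no a priori bound on $\|\ww^m_j\|$ relative to $|x_j|$: only stability is used to start the iteration.

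The first step is the linear estimate. Because $\ol D_1(x)$ is diagonal and commutes with the constant Jordan matrix $C_1$, one has $\exp\!\big(x^k(\ol D_1(x)+x^pC_1)\big)=\exp(x^k\ol D_1(x))\exp(x^{k+p}C_1)$; by Lemma~\ref{lem:nodaldomain}(i) each diagonal entry of $\exp(x^k\ol D_1(x))$ has modulus $\le e^{-c|x|^{k+t}}$ on $\petal$, while $\|\exp(x^{k+p}C_1)\|=1+O(|x|^{k+p})$ since $C_1$ is constant, so $\big\|\exp\!\big(x^k(\ol D_1(x)+x^pC_1)\big)\big\|\le e^{-c|x|^{k+t}}\big(1+C_3|x|^{k+p}\big)$ on $\petal$. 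Writing the orbit relation $\ww^m_{j+1}=\exp\!\big(x_j^k(\ol D_1(x_j)+x_j^pC_1)\big)\ww^m_j+R_j$ with $\|R_j\|\le M_1\big(|x_j|^{k+p+1}\|\yy^m_j\|+|x_j|^{k+p+m}\big)$ and bounding $\|\yy^m_j\|\le\|\ww^m_j\|+\|\zz^m_j\|<\|\ww^m_j\|+|x_j|^{m-1}$ — here the hypothesis $\|\zz^m_j\|<|x_j|^{m-1}$ enters — one gets $\|R_j\|\le M_1|x_j|^{k+p+1}\|\ww^m_j\|+2M_1|x_j|^{k+p+m}$, whence, since $t<p$ makes $|x_j|^{k+p}=o(|x_j|^{k+t})$ and $\varepsilon$ is small,
\[
\|\ww^m_{j+1}\|\le\Big(1-\tfrac{c}{2}|x_j|^{k+t}\Big)\|\ww^m_j\|+2M_1|x_j|^{k+p+m}.
\]
(When $k=t=0$ the factor is $e^{-c/2}$, a genuine geometric contraction, and the argument below runs verbatim with $\eta_j$ a positive constant.)

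The second step is to divide by $|x_{j+1}|^m$. The $x$-coordinate evolves by the parabolic map up to higher order, so the asymptotics in \eqref{equ:dynamics1} apply; since $|x_{j+1}|=|x_j|\,\big|1-x_j^{k+p}+O(|x_j|^{k+p+1})\big|$ and $\Real(x_j^{k+p})$ is comparable to $|x_j|^{k+p}$ on the narrow petal $\petal$, one gets $|x_j|^m/|x_{j+1}|^m\le1+4m|x_j|^{k+p}$ for $\varepsilon$ small. Substituting and again absorbing $|x_j|^{k+p}$ into $|x_j|^{k+t}$ yields $u_{j+1}\le(1-\eta_j)u_j+\delta_j$ with $\eta_j=\tfrac{c}{4}|x_j|^{k+t}$ and $\delta_j=3M_1|x_j|^{k+p}$. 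I then invoke the elementary fact that if $0\le u_{j+1}\le(1-\eta_j)u_j+\delta_j$ with $\eta_j\in[0,1]$, $\sum_j\eta_j=\infty$ and $\delta_j/\eta_j\to0$, then $u_j\to0$. Both conditions hold: $\delta_j/\eta_j\asymp|x_j|^{p-t}\to0$ because $p>t$; and \eqref{equ:dynamics1} gives $|x_j|^{k+p}\asymp1/j$, so $|x_j|^{k+t}=(|x_j|^{k+p})^{(k+t)/(k+p)}\asymp j^{-(k+t)/(k+p)}$ with exponent $<1$ precisely because $t<p$, hence $\sum_j\eta_j=\infty$. Therefore $u_j\to0$, and in particular $\|\ww^m_j\|<\tfrac12|x_j|^m$ for all large $j$.

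The main obstacle is the linear estimate and its interaction with the nilpotent, off-diagonal part of $C_1$: one must ensure that the contraction $e^{-c|x|^{k+t}}$ coming from $\ol D_1$ strictly dominates the growth $1+O(|x|^{k+p})$ of $\exp(x^{k+p}C_1)$, and this is exactly the strict inequality $t<p$ furnished by the node hypothesis through Lemma~\ref{lem:nodaldomain}. That same inequality is also what forces $\sum_j|x_j|^{k+t}$ to diverge and so drives $u_j$ to $0$. A smaller but genuine point is that only $\zz^m_j$, not $\ww^m_j$, is controlled by hypothesis; it is stability of the orbit — not any a priori estimate on $\ww^m_j$ — that keeps $(x_j,\yy^m_j)$ inside the region where the $O$-estimates hold with a uniform constant.
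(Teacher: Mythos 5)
Your proposal is correct and follows essentially the paper's own argument: the same recursive inequality for $u_j=\|\ww^m_j\|/|x_j|^m$ obtained from Lemma~\ref{lem:nodaldomain}(i), the absorption of the $O(x^{k+p})$ terms via $t<p$, and the divergence of $\sum_j |x_j|^{k+t}$ coming from \eqref{equ:dynamics1}. The only (harmless) difference is the concluding elementary step: the paper shows that the region $u<\tfrac12$ is forward-invariant and excludes $u_j\ge\tfrac12$ for all $j$ by a divergent-product contradiction, whereas you apply the standard comparison lemma for $u_{j+1}\le(1-\eta_j)u_j+\delta_j$ to get $u_j\to0$, which is a slightly stronger conclusion.
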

\begin{proof}
By Lemma~\ref{lem:nodaldomain} we have
\begin{align*}
	\frac{\|\ww^m_{j+1}\|}{|x_{j+1}|^m} & =
	\frac{\bigl\|\ww^m_j\bigl(\exp(x_j^k \ol D_1(x_j))+O(x_j^{k+p})\bigr) + O(x_j^{k+p+m})\bigr\|}{\bigl|x_j-x_j^{k+p+1}+ O(x_j^{2k+2p+1})\bigr|^m}\cr
	&\leq \frac{\|\ww_j^m\|}{|x_j|^m} \left(1-c|x_j|^{k+t}+O(x_j^{k+t+1})\right)+\|O(x_j^{k+p})\|
\end{align*}
for all $j$. This implies, since $t<p$, that if $\|\ww^m_j\|<\frac 12|x_j|^m$ then $\|\ww^m_{j+1}\|<\frac 12|x_{j+1}|^m$. Therefore, to prove the lemma it suffices to show that $\|\ww^m_j\|<\frac 12|x_j|^m$ for some $j$. Suppose this is not the case, so $\|\ww^m_j\|\ge \frac 12|x_j|^m$ for all $j\ge0$. Then
$$\frac{\|\ww^m_{j+1}\|}{|x_{j+1}|^m}\le \frac{\|\ww_j^m\|}{|x_j|^m} \left(1-c|x_j|^{k+t}+O(x_j^{k+t+1})\right)$$
for all $j\ge0$, so we obtain that
$$\frac{\|\ww^m_{j+1}\|}{|x_{j+1}|^m} \le \frac{\|\ww_0^m\|}{|x_0|^m}\prod_{l=0}^{j}\left(1-c|x_l|^{k+t}+O(x_l^{k+t+1})\right).$$
Since $\lim_{j\to\infty} (k+p)jx_j^{k+p}=1$ and $t<p$, the product above converges to 0 when $j\to\infty$, contradicting the fact that $\|\ww^m_j\|\ge \frac 12|x_j|^m$ for all $j$.
\end{proof}

Consider an orbit $\{(x_j,\ww^m_j,\zz^m_j)\}\subset\mathcal{S}_m$ and consider the coordinates $(x,\yy^{m+1})=(x,\ww^{m+1},\zz^{m+1})$ satisfying  $\yy^{m+1}=\yy^m-\left(J_{p+m}\overline\g(x)-J_{p+m-1}\overline\g(x)\right)$, where $\g(s)=(s,\overline\g(s))$ is a parametrization of $\G$. By Lemma~\ref{lem:asympt}, $\|\ww^m_j\|<\frac 12|x_j|^m$ for all $j$ sufficiently large, so $\|\ww^{m+1}_j\|<\frac 12|x_j|^m+M|x_j|^{p+m}$ for some $M>0$ and for all $j$ sufficiently large. Then, we get that $\|\ww^{m+1}_j\|<|x_j|^m$ for all $j$ sufficiently large, since we can assume that  $p\ge1$ (otherwise the variables $\ww^m$ do not appear). Therefore, by Lemma~\ref{lem:base-asymp}, $(x_j,\ww^{m+1}_j,\zz^{m+1}_j)\in \mathcal{S}_{m+1}$ if $j$ is big enough.
This shows that every orbit in $\mathcal{S}_m$ is asymptotic to $\G$.

This ends the proof of Theorem~\ref{th:stablemanifold}.

\end{document}